\newtheorem{lem}{Lemma}
\newtheorem{lemma}[lem]{Lemma}
\newtheorem{thm}{Theorem}
\newtheorem{theorem}[thm]{Theorem}
\newtheorem{cor}{Corollary}
\newtheorem{corollary}[cor]{Corollary}
\DeclareMathOperator{\mmod}{mod}
\newtheorem{defi}{Definition}
\newtheorem{definition}[defi]{Definition}
\newtheorem{rem}{Remark}
\newtheorem{remark}[rem]{Remark}
\let\@@pmod\pmod
\DeclareRobustCommand{\pmod}{\@ifstar\@pmods\@@pmod}
\def\@pmods#1{\mkern4mu({\operator@font mod}\mkern 6mu#1)}
\def\\{\cr}
\def\({\left(}
\def\){\right)}
\def\[{\left[}
\def\]{\right]}
\def\<{\langle}
\def\>{\rangle}
\begin{document}

\title{On the discriminator of
Lucas sequences}

\author{
{\sc Bernadette Faye,}~~{\sc Florian Luca}~~ {\sc and Pieter Moree}
}

\address{{\'Ecole Doctorale de Math\'ematiques et d'Informatique}\newline
{Universit\'e Cheikh Anta Diop de Dakar}\newline
{BP 5005, Dakar Fann, S\'en\'egal}\newline
\and\newline
{AIMS-Senegal, km 2 route de Joal}\newline
{BP: 1418, Mbour, Senegal}}

\email{\tt bernadette@aims-senegal.org}

\address{{School of Mathematics, University of the Witwatersrand}\newline
{P. O. Box Wits 2050, South Africa}\newline
\and\newline
{Max Planck Institute for Mathematics}\newline
{Vivatsgasse 7, 53111 Bonn, Germany}\newline
\and\newline
{Department of Mathematics}\newline
{Faculty of Sciences}\newline
{University of Ostrava}\newline
{30 dubna 22, 701 03 Ostrava 1, Czech Republic}}
\email{\tt florian.luca@wits.ac.za}

\address{
{Max Planck Institute for Mathematics}\newline
{Vivatsgasse 7, 53111 Bonn, Germany}}
\email{\tt moree@mpim-bonn.mpg.de}


\pagenumbering{arabic}

\maketitle

\begin{abstract}
\noindent We  consider the family of Lucas sequences uniquely determined by $U_{n+2}(k)=(4k+2)U_{n+1}(k) -U_n(k),$ with initial values $U_0(k)=0$ and $U_1(k)=1$ and $k\ge 1$ an arbitrary integer. For
any integer $n\ge 1$ the discriminator function $\mathcal{D}_k(n)$ of $U_n(k)$ is defined as the smallest integer $m$ such that $U_0(k),U_1(k),\ldots,U_{n-1}(k)$ are pairwise incongruent modulo $m$.
Numerical
work of Shallit on $\mathcal{D}_k(n)$
suggests
that it has a relatively
simple characterization. In this paper
we will prove that this is indeed
the case by showing that for every $k\ge 1$ there
is a constant $n_k$ such that ${\mathcal D}_{k}(n)$
has a simple characterization for every
$n\ge n_k$. The case $k=1$ turns out
to be fundamentally different from
the case $k>1$.
\end{abstract}

\section{Introduction}
\label{intro}
The {\it discriminator} of a sequence ${\bf a}=\{a_n\}_{n\ge 1}$
of distinct integers is the sequence given by
$$
{\mathcal D}_{\bf a}(n)=\min\{m: a_0,\ldots,a_{n-1}~{\text{\rm are~pairwise~distinct~modulo}}~m\}.
$$
In other words,
${\mathcal D}_{\bf a}(n)$
is the smallest
integer $m$ that allows one to discriminate (tell
apart) the integers $a_0,\ldots, a_{n-1}$  on reducing modulo $m$.

Note that since $a_0,\ldots,a_{n-1}$ are $n$ distinct residue classes modulo ${\mathcal D}_{\bf a}(n)$ it follows that ${\mathcal D}_{\bf a}(n)\ge n$.
On the other hand obviously
$${\mathcal D}_{\bf a}(n)\le \max\{a_0,\ldots,a_{n-1}\}
-\min\{a_0,\ldots,a_{n-1}\}.$$
Put
$$
{\mathcal D}_{\bf a}=\{{\mathcal D}_{\bf a}(n): n\ge 1\}.
$$
The main problem is to give an easy description or characterization of
the discriminator (in many cases such a characterization does not seem to exist). The discriminator was named and introduced by Arnold, Benkoski and McCabe in \cite{A}.  
They considered the sequence $\bf u$ with terms $u_j = j^2$.
Meanwhile the case where $u_j=f(j)$ with $f$ a polynomial has been well-studied,
see, for example, \cite{B,M,MM,Z}. The most general result in this direction is due to Zieve \cite{Z}, who improved
on an earlier result by Moree \cite{M}.

In this paper we study the discriminator problem for
Lucas sequences (for a basic
account of Lucas sequences see, for 
example, Ribenboim
\cite[2.IV]{Ri}).
Our main results are
Theorem \ref{main} ($k=1$) and Theorem
\ref{main2} ($k>2$). Taken together with Theorem \ref{twee} ($k=2$) 
they evaluate the
discriminator for the infinite family of
second-order recurrences \eqref{basicfamily} with for each $k$ at most finitely
many not covered values.
\par All members in the family \eqref{basicfamily} have a characteristic
equation that is irreducible over
the rationals. Very recently,
Ciolan and Moree \cite{CM} determined the
discriminator for another infinite family,
this time with all members having a reducible characteristic
equation.
For every prime $q\ge 7$ they computed the discriminator of the sequence $$u_q(j)=\frac{3^j-q(-1)^{j+(q-1)/2}}{4},~j=1,2,3,\ldots$$
that was first considered in this context
by Jerzy Browkin.
The case
$q=5$ was earlier dealt with by
Moree and Zumalac\'arregui
\cite{PA}, who showed that,
for this value of $q,$ the smallest positive integer $m$
discriminating $u_q(1),\ldots,u_q(n)$ modulo $m$
equals $\min\{2^e,5^f\},$ where $e$ is the smallest integer
such that $2^e\ge n$ and $f$ is the smallest integer
such that $5^f\ge 5n/4$.
\par Despite structural similarities
between the present paper and
\cite{CM} (for example the
index of appearance $z$ in the present paper plays the
same role as the period $\rho$
in \cite{CM}), there are also many differences.
For example, Ciolan and Moree have to work
much harder to exclude small prime numbers as discriminator values.
This is related to the sequence of good
discriminator candidate
values in that case being much sparser, namely
being $O(\log x)$ for the values $\le x$,
versus $\gg \log^2 x$.
In our
case one has to work with elements and ideals in
quadratic number fields, whereas in \cite{CM}
in the proof of the main result the realm of the rationals is never left.

\par Let $k\ge 1$. For
$n\ge 0$ consider the sequence $\{U_n(k)\}_{n\ge 0}$
uniquely determined by
\begin{equation}
\label{basicfamily}
U_{n+2}(k)
=(4k+2)U_{n+1}(k)-U_n(k),~U_0(k)=0,~U_1(k)=1.
\end{equation}
For $k=1$, the sequence $\{U_n(1)\}_{n\ge 0}$ is
$$
0, 1, 6, 35, 204, 1189, 6930, 40391, 235416, 1372105, 7997214,\ldots.
$$
This is A001109 in OEIS.
On noting that
$$U_{n+2}(k)-U_{n+1}(k)=4kU_{n+1}(k)
+U_{n+1}(k)-U_n(k)\ge 1,$$
one sees that the sequence $U_n(k)$
consists of strictly increasing
non-negative numbers. Therefore we
can consider ${\mathcal D}_{U(k)}$,
which for notational convenience we
denote by ${\mathcal D}_{k}$.
\par In May 2016, Jeffrey Shallit,
who was the first to consider
${\mathcal D}_k$, wrote the third
author that numerical evidence suggests
that ${\mathcal D}_{1}(n)$
is the smallest number of the form
$250\cdot 2^i$ or $2^i$ greater than
or equal to $n$, but
that he was reluctant to conjecture such a weird thing.
More extensive numerical experiments show that
if we compute ${\mathcal D}_1(n)$ for all $n\le 2^{10}$,
then they are powers of $2$ except for $n\in [129,150]$, and other similar instances such as
$$
n\in [2^a+1,2^{a-6}\cdot 75],\text{\rm ~for~which~}{\mathcal D}_1(n)=2^{a-6}\cdot 125\text{~and~}a\in \{7,8,9\}.
$$
Thus the situation is more weird than Shallit expected
and this is confirmed by
Theorem \ref{main}.

As usual, by $\{x\}$
the
fractional part of the real number $x$ is denoted. Note that  $\{x\}=x-\lfloor x\rfloor$.
\begin{theorem}
\label{main}
Let $v_n$ be the smallest power of two
such that
$v_n\ge n$. Let $w_n$ be the smallest integer
of the form $2^a5^b$ satisfying $2^a5^b\ge 5n/3$
with $a,b\ge 1$.
Then
$$
{\mathcal D}_{1}(n)=\min\{v_n,w_n\}.
$$
Let
$$
{\mathcal M}=\left\{m\ge 1: \left\{m \frac{\log 5}{\log 2}\right\}\ge 1-\frac{\log(6/5)}{\log 2}\right\}
=\{3,6,9,12,15,18,21,\ldots\}.
$$
We have
$$\{{\mathcal D}_1(2),{\mathcal D}_1(3),{\mathcal D}_1(4),
\ldots\}=\{2^a5^b:a\ge 1,~b\in {\mathcal M}\cup \{0\}\}.$$
\end{theorem}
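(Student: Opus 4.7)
\medskip\noindent\textbf{Proof plan.}
Let $V_n$ be the companion Lucas sequence, satisfying the same recurrence with $V_0=2$, $V_1=6$. Since $Q=1$, the product identities
\begin{equation}\label{plan:id}
U_i-U_j=V_{(i+j)/2}\,U_{(i-j)/2},\qquad U_i+U_j=U_{(i+j)/2}\,V_{(i-j)/2}
\end{equation}
hold whenever $i\equiv j\pmod 2$. Moreover $U_n\equiv n\pmod 2$, so for any even modulus $m$ a collision $U_i\equiv U_j\pmod m$ forces $i\equiv j\pmod 2$ and is therefore governed by \eqref{plan:id}. The plan is to use a $2$-adic and $5$-adic analysis to determine the exact discrimination threshold of the ``good'' families $m=2^a$ and $m=2^a5^b$, and then to rule out every other~$m$.

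\smallskip\noindent\textbf{Upper bound.} One first checks that $v_2(V_k)=1$ for every $k\ge 0$ and $v_2(U_k)=v_2(k)$ for every $k\ge 1$, which together with \eqref{plan:id} yield $v_2(U_i-U_j)=v_2(i-j)$. Thus $U_i\equiv U_j\pmod{2^a}$ with $0\le j<i<2^a$ is impossible, so $k\mapsto U_k\bmod 2^a$ is a bijection onto $\Z/2^a\Z$ and $2^a$ discriminates exactly $2^a$ consecutive terms, proving ${\mathcal D}_1(n)\le v_n$. For the prime $5$, direct computation gives $z(5)=3$ and $v_5(V_k)=0$ for every $k$, and the usual lifting-the-exponent identity yields $v_5(U_{3k})=1+v_5(k)$. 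Feeding these into \eqref{plan:id} and applying CRT, the smallest positive $i-j$ for which $m=2^a5^b$ (with $a,b\ge 1$) divides $V_{(i+j)/2}U_{(i-j)/2}$ is $\operatorname{lcm}(2^a,6\cdot 5^{b-1})=3m/5$; hence $m$ discriminates exactly the first $3m/5$ terms, giving ${\mathcal D}_1(n)\le w_n$.

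\smallskip\noindent\textbf{Lower bound.} Suppose some $m<\min\{v_n,w_n\}$ discriminates $U_0,\ldots,U_{n-1}$; then $n\le m<5n/3$ and $m$ is not a power of two. Write $m=2^a5^b\ell$ with $\gcd(\ell,10)=1$. The case $a=0$ is excluded because $U_1\equiv U_2\pmod 5$; the case $\ell=1$, $a,b\ge 1$ is excluded by the upper-bound paragraph, which forces $m\ge w_n$. So $\ell>1$; pick a prime $q\mid\ell$ and let $q^c\|m$. The main task is to produce a collision of $U_0,\ldots,U_{n-1}$ already modulo $q^c$. Using \eqref{plan:id} and the splitting of $q$ in $\Z[\sqrt 2]$ (so $z(q)\mid q\pm 1$), together with the observation that each zero $q\mid V_{k_0}$ forces $U_{k_0-j}\equiv U_{k_0+j}\pmod q$, one shows that modulo $q^c$ the first $n$ terms already contain a repetition whenever $n>3m/5$, which is the case here. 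Small primes $q\in\{3,7,11,13,\ldots\}$ must be treated individually, since for them both $z(q)$ and the first zero of $V_k\bmod q$ are very small. I expect this uniform control over all ``bad'' primes to be the principal technical obstacle.

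\smallskip\noindent\textbf{Identifying the image.} Given ${\mathcal D}_1(n)=\min\{v_n,w_n\}$, a value $m=2^a5^b$ with $b\ge 1$ occurs as some ${\mathcal D}_1(n)$ iff some $n$ has $w_n=m<v_n$. The largest admissible such $n$ is $n=3\cdot 2^a5^{b-1}$, and the condition $v_n>m$ becomes, after dividing by $2^a$ and taking $\log_2$,
$$\lceil b\log_2 5+\log_2(3/5)\rceil>b\log_2 5.$$
Irrationality of $b\log_2 5$ reduces this to $\{b\log_2 5\}>\log_2(5/3)=1-\log_2(6/5)$, i.e.\ $b\in{\mathcal M}$. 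Every $a\ge 1$ then arises by homogeneity, yielding the stated description of $\{{\mathcal D}_1(n):n\ge 2\}$.
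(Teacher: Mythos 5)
Your upper-bound analysis and your determination of the image are essentially correct, and they take a mildly more elementary route than the paper: you work with $2$-adic and $5$-adic valuations of $U$ and $V$ together with the factorization $U_i-U_j=U_{(i-j)/2}V_{(i+j)/2}$, where the paper argues via norms from $\Z[\sqrt 2]$ (Lemmas \ref{poweroftwo}, \ref{lem:last}, \ref{twocases}); for the two families $2^a$ and $2^a5^b$ this works because $2\|V_k$ for all $k$ and $5\nmid V_k$ (as $z(5)=3$ is odd).

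The lower bound, however, is where the theorem actually lives, and your plan for it has two concrete flaws. First, from $m<\min\{v_n,w_n\}$ you may only deduce $m<v_n<2n$, i.e.\ $n>m/2$; your inequality $m<5n/3$ (equivalently $n>3m/5$) does not follow, since $w_n$ need not be below $2n$ (already $n=33$ gives $\min\{v_n,w_n\}=64$, and $m=60$ violates $m<5n/3=55$). Second, a collision of $U_0,\ldots,U_{n-1}$ modulo a single prime power $q^c\|m$ does not disqualify $m$: two terms agreeing modulo $q^c$ may still differ modulo $2^a$, hence modulo $m$. One needs a collision modulo $m$ itself, and for the dangerous moduli $m=2^ap^{b}$ with $p\equiv 5\pmod*{8}$ and $z(p)=(p+1)/2$ the first such collision occurs only at index $z(m)=m(p+1)/(2p)>m/2$, so with only $n>m/2$ available no contradiction results, and no power of two below $m$ is guaranteed to beat $m$. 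This is exactly why the paper must play such $m$ off against the competitors $2^c5^d$ (which discriminate $3/5$ of their size), prove a Bertrand-type statement for these $S$-units (Lemma \ref{BP}, with the explicit threshold $2^{24}\cdot 5^3$), and treat the primes $13$, $29$, $37$ individually: $13$ because $(3+2\sqrt 2)^7\equiv -1\pmod*{13^2}$ blocks the argument for $13^{b}$ with $b>1$, $29$ because $z(29)=5$, and $37$ because $19/37>1/2$ forces explicit power-of-two checks for $2\cdot 37^i$, $1\le i\le 5$. None of this appears in your plan, and it is the principal content of the proof, not a routine finite check. Finally, your dismissal of odd $m$ via $U_1\equiv U_2\pmod*{5}$ only covers odd $m$ divisible by $5$; odd prime powers $p^{b}$ with $p\equiv 5\pmod*{8}$ and $z(p)=(p+1)/2$ require the separate construction of Lemma \ref{iseven}, which manufactures a collision at indices near $m/4$ from $\alpha^{p^{b-1}(p+1)/2}\equiv -1\pmod*{p^{b}}$.
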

\noindent A straightforward application of
Weyl's criterion (cf. the proof of
\cite[Proposition 1]{PA} or \cite[Proposition 1]{CM}) gives
$$\lim_{x\rightarrow \infty}\frac{\#\{m\in {\mathcal M}:m\le x\}}{x}=\frac{\log(6/5)}{\log 2}
=0.263034\ldots.$$
\par In contrast to the case $k=1$, the
case $k=2$ turns out to be especially easy.
\begin{theorem}
\label{twee}
Let $e\ge 0$ be the smallest integer such that
$2^e\ge n$ and $f\ge 1$ the smallest  integer such that
$3\cdot 2^f\ge n$. Then
${\mathcal D}_2(n)=\min\{2^e,3\cdot 2^f\}$.
\end{theorem}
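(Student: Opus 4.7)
My plan is to prove matching upper and lower bounds by identifying the full set of good discriminating moduli for $\{U_n(2)\}_{n\ge 0}$, namely $\{2^a:a\ge 0\}\cup\{3\cdot 2^b:b\ge 1\}$; below I abbreviate $U_n=U_n(2)$. The central ingredient is the $2$-adic behaviour of the sequence. Letting $V_n$ denote the companion sequence (same recurrence with $V_0=2$, $V_1=10$), the congruence $P=10\equiv 2\pmod 4$ propagates through the recurrence to give $V_n\equiv 2\pmod 4$ for all $n\ge 0$, so $v_2(V_n)=1$. Combined with $U_{2n}=U_nV_n$ and $v_2(U_1)=0$, an induction yields $v_2(U_n)=v_2(n)$. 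Since $Q=\alpha\beta=1$, the identity $U_i-U_j=V_{(i+j)/2}\,U_{(j-i)/2}$ holds whenever $i\equiv j\pmod 2$, so in that case $v_2(U_i-U_j)=v_2(j-i)$; in the opposite-parity case both sides equal $0$ because $U_n\equiv n\pmod 2$. Thus $v_2(U_i-U_j)=v_2(j-i)$ unconditionally, hence $U_i\equiv U_j\pmod{2^e}$ iff $2^e\mid(j-i)$, and $U_0,\ldots,U_{2^e-1}$ are pairwise distinct mod $2^e$.

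To handle $m=3\cdot 2^f$, I would compute that $U_n\bmod 3$ has period $6$ with cycle $(0,1,1,0,2,2)$. By the Chinese remainder theorem, $U_i\equiv U_j\pmod{3\cdot 2^f}$ requires both $2^f\mid(j-i)$ and $U_i\equiv U_j\pmod 3$. The first condition restricts $j-i$ to $\{2^f,\,2\cdot 2^f\}$ for $0\le i<j\le 3\cdot 2^f-1$; since $2^f\equiv 2$ or $4\pmod 6$ for $f\ge 1$, a case check over the six residues of $i\bmod 6$ rules out every such candidate. This gives $\mathcal D_2(n)\le\min(2^e,3\cdot 2^f)$.

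For the lower bound I would show that no modulus $m$ with $n\le m<\min(2^e,3\cdot 2^f)$ discriminates $U_0,\ldots,U_{n-1}$. Such $m$ is neither a pure power of $2$ nor of the form $3\cdot 2^b$ with $b\ge 1$; hence either $m=3$ (which occurs only at $n=3$, where $U_1\equiv U_2\pmod 3$ immediately gives the collision), or $9\mid m$, or $m$ has a prime factor $p\ge 5$.

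The main obstacle is this remaining generic case. My plan is to use the rank of apparition $z(m)$---the smallest positive integer $k$ with $m\mid U_k$---producing the collision $U_0\equiv U_{z(m)}\pmod m$, and then to verify $z(m)\le n-1$. Writing $m=2^am'$ with $m'$ odd, one has $z(m)=\operatorname{lcm}(2^a,z(m'))$ since $z(2^a)=2^a$. For each odd prime $p\ge 5$, the symmetry $U_{p+1-k}\equiv -U_k\pmod p$---which follows from $\alpha\beta=1$ together with $\alpha^{p+1}\equiv 1$ in $\mathbb{F}_{p^2}^{\,*}$ when $p$ is inert in $\mathbb{Q}(\sqrt 6)$, with a parallel split-case analysis---yields $z(p)\le (p+1)/2$. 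Lifting via $z(p^k)\mid p^{k-1}z(p)$, using $z(3^k)\le 3^{k-1}$ for $k\ge 2$ (a direct check), and combining across the prime factorisation of $m'$ yields the clean estimate $z(m)\le (3/5)m$ for every non-good $m>3$. The key observation for closing the argument is that, since consecutive good moduli have ratio at most $3/2$, the constraint $m<\min(2^e,3\cdot 2^f)$ forces $n-1\ge\max(2^{e-1},3\cdot 2^{f-1})>(2/3)m$. Hence $z(m)\le (3/5)m<(2/3)m<n-1$, producing the required collision inside $\{U_0,\ldots,U_{n-1}\}$ and completing the proof.
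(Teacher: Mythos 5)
Your proposal is correct and follows essentially the same route as the paper: the good moduli are exactly the $2^a$ and $3\cdot 2^b$, every other modulus satisfies $z(m)\le \tfrac{3}{5}m$, so the collision $U_0\equiv U_{z(m)}\pmod*{m}$ already occurs among the first $n$ terms, and the good moduli are spaced closely enough (consecutive ratio at most $3/2$; the paper uses the interval $[n,5n/3)$) to beat any such $m$. The only real difference is self-containedness: where the paper's short proof invokes its general machinery (Lemma \ref{poweroftwo} for powers of $2$, Lemmas \ref{tweedeelgevallen} and \ref{lem:iui} for why $3\cdot 2^f$ discriminates, and Lemma \ref{appearance} for $z(m)\le 3m/5$), you verify these facts directly via $v_2(U_i-U_j)=v_2(i-j)$, the period-$6$ cycle of $U_n\bmod 3$, and the Frobenius-type bound $z(p)\le (p+1)/2$, which also lets you treat the small case $m=3$ explicitly rather than implicitly.
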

\indent Our second main result shows
that the behavior of the discriminator 
${\mathcal D}_k$ 
with $k>2$ is very different from that of
${\mathcal D}_1$.
\begin{theorem}\setcounter{thm}{1}
\label{main2}
Put
$${\mathcal A}_k=\begin{cases}
\{m~{\text{\rm odd}}:\text{\rm if~}p\mid m,~{\text{\rm then}}~p\mid k\}\text{~if~}
k\not\equiv 6\pmod*{9};\cr
\{m~{\text{\rm odd}},~9\nmid m:\text{\rm if~}p\mid m,~{\text{\rm then}}~p\mid k\}\text{~if~}
k\equiv 6\pmod*{9},
\end{cases}
$$
and $${\mathcal B}_k
=\begin{cases}
\{m~{\text{\rm even}}:\text{\rm if~}p\mid m,~{\text{\rm then}}~p\mid k(k+1)\}
\text{~if~}
k\not\equiv 2\pmod*{9};\cr
\{m~{\text{\rm even},~9\nmid m}:\text{\rm if~}p\mid m,~{\text{\rm then}}~p\mid k(k+1)\}
\text{~if~}
k\equiv 2\pmod*{9}.
\end{cases}
$$
Let $k>2$. We have
\begin{equation*}
{\mathcal D}_{k}(n)\le \min\{m\ge n:
m\in {\mathcal A}_{k}\cup {\mathcal B}_{k}\},
\end{equation*}
with equality if the interval 
$[n,3n/2)$ contains an integer 
$m\in {\mathcal A}_{k}\cup {\mathcal B}_{k}$ and with at most 
finitely many $n$ for which 
strict inequality holds.
Furthermore, we have ${\mathcal D}_{k}(n)=n$ if and only 
if $n\in {\mathcal A}_{k}\cup {\mathcal B}_{k}$.
\end{theorem}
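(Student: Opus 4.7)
\emph{Proof plan for Theorem~\ref{main2}.} The argument rests on two ingredients. From the explicit expansion
$$U_n(k)=\sum_{j\ge 0}\binom{n}{2j+1}\bigl(4k(k+1)\bigr)^{j}(2k+1)^{n-2j-1}$$
one reads off the baseline congruences $U_n\equiv n\pmod{k}$ and $U_n\equiv(-1)^{n+1}n\pmod{k+1}$, and correspondingly $V_s\not\equiv 0\pmod{p}$ for every odd prime $p\mid k(k+1)$ and every $s\ge 0$.  The second tool is the Lucas factorization
$$U_j-U_i=U_{(j-i)/2}(k)\,V_{(j+i)/2}(k)\qquad\text{when }i\equiv j\pmod{2},$$
where $V_n=\alpha^n+\beta^n$ is the companion sequence and $\alpha,\beta$ are the roots of $x^2-(4k+2)x+1$.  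Combined, these say that for odd $p\mid k$ the map $n\mapsto U_n\pmod p$ is the identity, and for odd $p\mid k+1$ it is $n\mapsto(-1)^{n+1}n$; this is the bridge linking $\cA_k\cup\cB_k$ to the discriminator.

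For the upper bound ${\mathcal D}_k(n)\le\min\{m\ge n:m\in\cA_k\cup\cB_k\}$, it is enough, by the Chinese Remainder Theorem, to show that any prime power $p^r\in\cA_k\cup\cB_k$ satisfies: $U_0,\ldots,U_{p^r-1}$ are pairwise distinct modulo $p^r$.  For odd $p\mid k$ the plan is to upgrade $U_n\equiv n\pmod{p}$ to the $p$-adic identity $v_p(U_n)=v_p(n)$, with $v_p(V_n)=0$, by a Hensel-style expansion of the formula above to depth $2v_p(k)$; the factorization then yields $v_p(U_j-U_i)=v_p(j-i)$ in the same-parity case, giving injectivity on $[0,p^r)$, and a parallel argument handles the mixed-parity case.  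The symmetric computation covers $p\mid k+1$, and for $p=2$ a direct calculation shows $U_0,\ldots,U_{2^r-1}$ is a permutation of $\Z/2^r\Z$.  The exclusions ``$9\nmid m$'' for $k\equiv 6,2\pmod{9}$ are forced by the one situation in which this lifting fails at $p=3$: for $v_3(k)=1$ and $n=3$, the summand $\binom{3}{3}\cdot 4k(k+1)$ sits at exactly the same $3$-adic depth as the leading term $3(2k+1)^2$ and cancels it mod $9$ precisely when $k\equiv 6\pmod{9}$, producing the early collision $U_0\equiv U_3\pmod{9}$; the $k\equiv 2\pmod{9}$ case is symmetric via $k+1$ in place of $k$.

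For the equality statement, when $[n,3n/2)$ contains some $m\in\cA_k\cup\cB_k$, any would-be discriminator $m'$ with $n\le m'<m$ itself lies in $[n,3n/2)\setminus(\cA_k\cup\cB_k)$, so it has a prime factor $q$ violating the membership condition.  I produce a collision $U_i\equiv U_j\pmod{m'}$ with $0\le i<j<n$ by combining the factorization with the theory of the rank of apparition: for such a prime $q$ one has $z(q)\le q+1$, and a careful splitting of $m'$ into factors dividing $U_{(j-i)/2}$ and $V_{(j+i)/2}$ for appropriate $i,j$ places the required collision inside $[0,n)$, with the case that $m'$ itself is prime dispatched by invoking the companion sequence.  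Since $\cA_k\cup\cB_k$ is supported on the fixed finite prime set dividing $k(k+1)$, the ratio of consecutive elements tends to $1$, so all but finitely many intervals $[n,3n/2)$ meet $\cA_k\cup\cB_k$; the remaining exceptional $n$ are treated by a direct case analysis of the admissible $m'\in[n,m)$.  The characterisation ${\mathcal D}_k(n)=n\iff n\in\cA_k\cup\cB_k$ then follows by applying the upper bound at $m=n$ (``if'') and the preceding collision argument at $m'=n$ (``only if'').  The main technical obstacle will be the $p$-adic upgrade $v_p(U_n)=v_p(n)$ for odd $p\mid k$: it requires a careful Hensel-style expansion of the explicit formula (or, equivalently, a study of $\alpha,\beta$ in the possibly ramified $p$-adic completion of $\Q(\sqrt{k(k+1)})$), and the exact identification of the anomaly at $p=3$, $v_3(k)=1$ that forces the $9$-exclusion in $\cA_k$ and $\cB_k$.
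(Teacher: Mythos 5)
Your reduction of the upper bound to prime powers is where the argument breaks. You propose to show, for every prime power $p^r\in\cA_k\cup\cB_k$, that $U_0,\ldots,U_{p^r-1}$ are pairwise distinct modulo $p^r$, and to conclude by the Chinese Remainder Theorem, adding that ``the symmetric computation covers $p\mid k+1$''. But for an odd prime $p\mid k+1$ this prime-power statement is false, and your own baseline congruence shows it: from $U_n\equiv(-1)^{n+1}n\pmod{p}$ one gets the collision $U_i\equiv U_j\pmod{p^r}$ for $i=(p^r-1)/2$, $j=(p^r+1)/2$ (opposite parity, $i+j=p^r$), both indices lying in $[0,p^r)$. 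This is precisely why the paper's sets are asymmetric: odd $m$ are only allowed prime factors of $k$, while primes dividing $k+1$ are admitted only when $m$ is even. The correct prime-power input is not injectivity on $[0,p^r)$ but a characterization of \emph{all} collisions as congruences on $i\pm j$: for $p\mid k$ one has $U_i\equiv U_j\pmod{p^b}\iff i\equiv j\pmod{z(p^b)}$, whereas for $p\mid k+1$ the collision occurs iff either $i\equiv j\pmod 2$ and $z(p^b)\mid i-j$, or $i\not\equiv j\pmod 2$ and $z(p^b)\mid i+j$; only after combining these via CRT with the $2$-part (which forces $i\equiv j\pmod 2$ for even $m$, killing the mixed-parity mode) does one get $U_i\equiv U_j\pmod m\iff i\equiv j\pmod m$ exactly for $m\in\cA_k\cup\cB_k$. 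Without this parity interplay your plan would wrongly certify odd moduli with prime factors dividing $k+1$ as discriminator-friendly, and also cannot deliver the ``only if'' half of ${\mathcal D}_k(n)=n\iff n\in\cA_k\cup\cB_k$, which rests on exhibiting the mixed-parity collision for odd $n$ divisible by a prime $p\mid k+1$.

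Two further points need repair. First, the finiteness of the exceptional set does not follow from ``$\cA_k\cup\cB_k$ is supported on the primes dividing $k(k+1)$, so ratios of consecutive elements tend to $1$'': that principle is false for such sets in general (for $k=2$ the analogous set is $\{2^a\}\cup\{3\cdot 2^a:9\nmid 3\cdot 2^a\}$ and the ratios do not tend to $1$, which is exactly why that case needs the wider interval $[n,5n/3)$). What is actually needed, and what the paper proves, is that for $k>2$ the number $k(k+1)$ has a non-special odd prime factor $p$ (so that all $2^{1+a}p^b$ lie in $\cB_k$; this uses the solution of $k(k+1)=2^a3^b$), together with a Bertrand-type statement that intervals $[n,3n/2)$ eventually contain a number $2^ap^b$. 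Second, your equality argument needs the explicit constant $2/3$: one must show that every $m\notin\cA_k\cup\cB_k$ discriminates at most the first $\max\{z(m),\,m_1(p^a+1)/2\}\le 2m/3$ terms (the worst case being $p=3$), since it is this bound, not ``$z(q)\le q+1$'', that matches the interval $[n,3n/2)$ in the statement.
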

\begin{remark}
The condition on the interval $[n,3n/2)$ is sufficient, but not always
necessary. The proof also works for $k=2$ in which
case the interval becomes $[n,5n/3)$. However,
we prefer to give a short proof from scratch of
Theorem \ref{twee} (in Section \ref{generalkintro}).
\end{remark}
Theorems \ref{twee} and \ref{main2} 
taken together have
the following corollary.
\begin{corollary}
For $k>1$ there is a finite set ${\mathcal F}_k$ such that
\begin{equation}
\label{ABF}
{\mathcal D}_{k}=
{\mathcal A}_k\cup
{\mathcal B}_k\cup {\mathcal F}_k.
\end{equation}
\end{corollary}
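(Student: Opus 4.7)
The plan is to derive the corollary directly from Theorems \ref{twee} and \ref{main2}; no further ideas are needed, and the argument is essentially bookkeeping, splitting the proof into the cases $k>2$ (handled by Theorem \ref{main2}) and $k=2$ (handled by Theorem \ref{twee}), and in each case checking that the image ${\mathcal D}_k$ coincides with ${\mathcal A}_k\cup {\mathcal B}_k$ up to a finite correction.

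For $k>2$ my first step is to note that the final sentence of Theorem \ref{main2} gives the inclusion ${\mathcal A}_k\cup {\mathcal B}_k\subseteq {\mathcal D}_k$: indeed, for any $m\in {\mathcal A}_k\cup {\mathcal B}_k$ we have ${\mathcal D}_k(m)=m$, so $m$ is a value of the discriminator. For the reverse direction, I would let $N_k$ denote the (finite, by Theorem \ref{main2}) set of indices $n\ge 1$ for which strict inequality holds in the upper bound on ${\mathcal D}_k(n)$, and set ${\mathcal F}_k:=\{{\mathcal D}_k(n):n\in N_k\}$, a finite subset of ${\mathcal D}_k$. For each $n\notin N_k$ one has ${\mathcal D}_k(n)=\min\{m\ge n:m\in {\mathcal A}_k\cup {\mathcal B}_k\}$, which automatically lies in ${\mathcal A}_k\cup {\mathcal B}_k$. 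Combining these two observations gives ${\mathcal D}_k\subseteq {\mathcal A}_k\cup {\mathcal B}_k\cup {\mathcal F}_k$, and the reverse inclusion is immediate from the first step and the very definition of ${\mathcal F}_k$.

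For $k=2$ I would instead read off the image directly from Theorem \ref{twee}, obtaining ${\mathcal D}_2=\{2^e:e\ge 0\}\cup\{3\cdot 2^f:f\ge 1\}$, and then verify by a short arithmetical check that this equals ${\mathcal A}_2\cup {\mathcal B}_2$ in the sense of Theorem \ref{main2}. Since $2\not\equiv 6\pmod{9}$, the first clause of the definition of ${\mathcal A}_k$ applies and yields ${\mathcal A}_2=\{m\text{ odd}:p\mid m\Rightarrow p\mid 2\}=\{1\}$, while $2\equiv 2\pmod{9}$ triggers the second clause of the definition of ${\mathcal B}_k$, so that ${\mathcal B}_2=\{2^a3^b:a\ge 1,\,b\in\{0,1\}\}$. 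These two sets together give precisely the image listed above, so ${\mathcal F}_2=\emptyset$ is an admissible choice.

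The proof involves no substantial obstacle; the only point requiring a little attention is the $k=2$ verification, where one must notice that it is exactly the ``$9\nmid m$'' restriction in ${\mathcal B}_2$ that forbids the spurious values $9\cdot 2^a$ and $27\cdot 2^a$ which would otherwise appear, matching the absence of such values in Theorem \ref{twee}'s formula. Beyond this routine bookkeeping everything else is an immediate consequence of the statements that we are allowed to use.
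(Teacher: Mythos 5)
Your proposal is correct and follows the same route the paper intends: the paper states the corollary as an immediate consequence of Theorems \ref{twee} and \ref{main2} without writing out a separate proof, and your bookkeeping (the inclusion ${\mathcal A}_k\cup{\mathcal B}_k\subseteq{\mathcal D}_k$ from ${\mathcal D}_k(n)=n$, the finite exceptional set of $n$ with strict inequality for $k>2$, and the explicit check that ${\mathcal A}_2\cup{\mathcal B}_2$ equals the image in Theorem \ref{twee} so that ${\mathcal F}_2=\emptyset$) is exactly the deduction left implicit there.
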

Note that
${\mathcal A}_1=\{1\},~
{\mathcal B}_1=\{2^e:e\ge 1\}$
and that by Theorem \ref{main} identity
\eqref{ABF} holds true with
${\mathcal F}_1=\{2^a\cdot 5^m: a\ge 1
\text{~and~} m\in {\mathcal M}\}.$
In particular, ${\mathcal F}_1$ is not
finite. In
contrast to this, Theorem \ref{twee} says that
${\mathcal F}_2$ is empty and Theorem \ref{main2} says that ${\mathcal F}_k$ is 
finite for $k>1$.
In part II \cite{CLM} the problem
of explicitly computing ${\mathcal F}_k$ is considered.
\par Despite the progress made in this paper, for most second order
recurrences (and the Fibonacci numbers belong to
this class), the discriminator remains
quite mysterious, even conjecturally. Thus in this
paper we only reveal the tip of an iceberg.
\section{Preliminaries}
\label{sec:2}

We start with some considerations about $U(k)$ valid for any $k\ge 1$. The characteristic equation of this recurrence is
$$
x^2-(4k+2)x+1=0.
$$
Its roots are $(\alpha(k),\alpha(k)^{-1})$, where
$$
\alpha(k)=2k+1+2{\sqrt{k(k+1)}}.
$$
Its discriminant is $$\Delta(k)=\Big(\alpha(k)-\frac{1}{\alpha(k)}\Big)^2=16k(k+1).$$
We have $\alpha(k)=\beta(k)^2$, where $\beta(k)={\sqrt{k+1}}+{\sqrt{k}}$. Thus,
$$
U_n(k)=\frac{\alpha(k)^{n}-\alpha(k)^{-n}}{\alpha(k)-\alpha(k)^{-1}}=\frac{\beta(k)^{2n}-\beta(k)^{-2n}}{\beta(k)^2-\beta(k)^{-2}}
$$
is both the Lucas sequence having roots $(\alpha(k),\alpha(k)^{-1})$, as well as the sequence of even indexed members of the Lehmer sequence having roots $(\beta(k),\beta(k)^{-1})$
(cf. Bilu and
Hanrot \cite {BH} or Ribenboim \cite[pp. 69-74]{Ri}). 

First we study the congruence $U_i(k)\equiv U_j(k)\pmod*{m}$ in case $m$ is an arbitrary integer. By the Chinese Remainder Theorem, it suffices to study this congruence only in the case where $m$ is a prime power.
In this section we will only deal with the easiest case
where $m$ is a power of two.
\begin{lemma}
\label{poweroftwo}
If $U_i(k)\equiv U_j(k)\pmod*{2^{a}}$, then $i\equiv j\pmod*{2^{a}}$.
\end{lemma}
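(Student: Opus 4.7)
The plan is to prove the stronger statement $v_2(U_i(k)-U_j(k))=v_2(i-j)$ for $i\ne j$, from which the lemma follows at once. All the ingredients come from standard Lucas/Lehmer identities applied to the companion sequence $V_n(k)=\alpha(k)^n+\alpha(k)^{-n}$, which satisfies the same recurrence $V_{n+2}=(4k+2)V_{n+1}-V_n$ with $V_0=2$, $V_1=4k+2$.

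First I would dispose of the parity issue. A trivial induction on the recurrence modulo $2$ shows $U_n\equiv n\pmod{2}$, so $U_i\equiv U_j\pmod{2^a}$ with $a\ge 1$ already forces $i\equiv j\pmod{2}$. This lets me assume $i-j$ is even and write $i=r+s$, $j=r-s$ with integers $r=(i+j)/2$, $s=(i-j)/2$.

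The key identity is
$$
U_i-U_j=U_{r+s}-U_{r-s}=U_s V_r,
$$
which is immediate from the closed forms
$U_n=(\alpha^n-\alpha^{-n})/(\alpha-\alpha^{-1})$ and $V_n=\alpha^n+\alpha^{-n}$: one pulls out $\alpha^s-\alpha^{-s}$ and $\alpha^r+\alpha^{-r}$ by grouping. Now I need two $2$-adic facts:
\begin{itemize}
\item $v_2(V_n)=1$ for every $n\ge 0$. Indeed $V_0/2=1$ and $V_1/2=2k+1$ are odd, and the recurrence gives $V_{n+2}/2=(2k+1)V_{n+1}-V_n/2$, whose right-hand side is odd by induction (since $V_{n+1}$ is even and $V_n/2$ is odd).
\item $v_2(U_n)=v_2(n)$ for every $n\ge 1$. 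For odd $n$ this is immediate from the $\mathrm{mod}\,2$ behavior. For even $n=2m$, the doubling identity $U_{2m}=U_m V_m$ together with $v_2(V_m)=1$ gives $v_2(U_{2m})=v_2(U_m)+1$, and an induction on $v_2(n)$ finishes the job.
\end{itemize}

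Combining, for $i\ne j$ with $i\equiv j\pmod{2}$,
$$
v_2(U_i-U_j)=v_2(U_s)+v_2(V_r)=v_2(s)+1=v_2(2s)=v_2(i-j).
$$
If instead $i\not\equiv j\pmod{2}$ then both sides equal $0$. Hence $v_2(U_i-U_j)=v_2(i-j)$ unconditionally, so $2^a\mid U_i-U_j$ forces $2^a\mid i-j$, which is exactly the lemma. There is no real obstacle here beyond bookkeeping; the only spot to be careful is verifying $v_2(V_n)=1$ uniformly in $n$ (this is special to the present family, where $4k+2$ is twice an odd number), since it is what prevents any extra powers of $2$ from leaking into $V_r$.
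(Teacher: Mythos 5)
Your proof is correct and rests on exactly the same two ingredients as the paper's: the factorization $U_i-U_j=U_{(i-j)/2}(k)\,V_{(i+j)/2}(k)$ and the fact that $2\,\|\,V_n(k)$ for all $n$, proved by the same induction on the recurrence. The only difference is packaging: where the paper inducts on $a$, you extract the exact valuation identity $v_2\bigl(U_i(k)-U_j(k)\bigr)=v_2(i-j)$, which is a mild strengthening (it also yields the converse implication, used implicitly in the paper via $z(2^a)=2^a$), so there is nothing to correct.
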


\begin{proof} This is clear for $a=0$. When $a=1$, we have $U_0(k)=0,~U_1(k)=1$ and $U_{n+2}(k)\equiv -U_n(k)\pmod*{2}$. Thus, $U_{n+2}(k)\equiv U_{n}(k)\pmod*{2}$. This shows that $U_n(k)\equiv n\pmod*{2}$ for all $n\ge 0$.
Therefore $U_i(k)\equiv U_j(k)\pmod*{2}$ implies that $i\equiv j\pmod*{2}$, which is what we wanted.
We now proceed by induction on $a$. Assume
that $a>1$ and that the lemma has been proved for $a-1$. Let $i\le j$ be such that $U_i(k)\equiv U_j(k)\pmod*{2^{a}}$. In particular, $U_i(k)\equiv U_j(k)\pmod* 2$
and so $i\equiv j\pmod*{2}$.
It is easy to check that putting $V_n(k)$ for the sequence given by $V_0(k)=2,~V_1(k)=4k+2$,
we have
$$
U_j(k)-U_i(k)=U_{(j-i)/2}(k)V_{(j+i)/2}(k).
$$
The sequence $\{V_n(k)\}_{n\ge 0}$ satisfies the same recurrence  as $\{U_n(k)\}_{n\ge 0}$, namely $$V_{n+2}(k)=(4k+2)V_{n+1}(k)-V_n(k).$$
Note that $V_n(k)=\alpha(k)^n+\alpha(k)^{-n}$.
Further, by induction on $n$ using the fact that $2\| V_0(k)$ and $2\| V_1(k)$ and the recurrence for $V(k)$, we conclude that if $2\| V_n(k)$ and $2\| V_{n+1}(k)$, then
$$V_{n+2}(k)=(4k+2)V_{n+1}(k)-V_n(k)\equiv -V_n(k)\equiv 2\pmod*{4},$$
so $2\| V_{n+2}(k)$. Hence, since $2^{a}\mid U_i(k)-U_j(k)=U_{(i-j)/2}(k)V_{(i+j)/2}(k)$, and $2\| V_{(i+j)/2}(k)$, we get that $2^{a-1}\mid U_{(i-j)/2}(k)$. Thus,
$U_{(i-j)/2}(k)\equiv U_0(k)\pmod*{2^{a-1}}$ and by the  induction hypothesis we get that $(i-j)/2\equiv 0\pmod*{2^{a-1}}$. Thus, $i\equiv j\pmod*{2^{a}}$ and the induction is complete.
\end{proof}
\begin{corollary}
We have ${\mathcal D}_{k}(n)\le \min\{2^e:2^e\ge n\}$.
\end{corollary}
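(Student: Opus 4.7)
The plan is to deduce the corollary directly as a contrapositive application of Lemma \ref{poweroftwo}. Let $e$ be any integer with $2^e\ge n$, and set $m=2^e$. I would take two indices $0\le i<j\le n-1$ and observe that $0<j-i<n\le 2^e$, so $i\not\equiv j\pmod*{2^e}$.

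By the contrapositive of Lemma \ref{poweroftwo} (with $a=e$), the congruence $U_i(k)\equiv U_j(k)\pmod*{2^e}$ forces $i\equiv j\pmod*{2^e}$, which contradicts the previous step. Hence the residues $U_0(k),U_1(k),\ldots,U_{n-1}(k)$ are pairwise distinct modulo $2^e$, so $2^e$ is a valid discriminating modulus and therefore ${\mathcal D}_k(n)\le 2^e$. Minimizing over all $e$ with $2^e\ge n$ yields the claimed bound. There is no real obstacle here: the corollary is an immediate packaging of the lemma, and its purpose is to record that the set of powers of two provides a universal family of discriminator candidates, setting the stage for the more delicate analysis at odd primes and at prime powers other than $2$.
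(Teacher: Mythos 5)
Your proof is correct and is exactly the argument the paper intends (the corollary is stated without proof as an immediate consequence of Lemma \ref{poweroftwo}): indices $0\le i<j\le n-1$ differ by less than $2^e$, so the lemma's contrapositive shows $2^e$ discriminates the first $n$ terms.
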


\section{Index of appearance}
We now need to study the congruence $U_i(k)\equiv U_j(k)\pmod*{p^{b}}$ for odd primes $p$ and integers $b\ge 1$. We start with the easy case when $j=0$. Given $m$, the smallest $n\ge 1$ such that $U_n(k)\equiv 
0\pmod*{m}$ exists, cf. \cite{BH}, and is called
the {\it index of appearance of $m$ in $U(k)$} and
is denoted by $z(m)$.
(For notational convenience we suppress
the dependence of $z(m)$ on $k$.)
The following result is well-known, cf. Bilu and Hanrot \cite{BH}. We write $\nu_p(m)$ for the exponent of the prime $p$ in the factorization of the positive integer $m$. For an odd prime $p$ we write $(\frac{\bullet}{p})$ for the Legendre symbol with respect to $p$.
\begin{lemma}
\label{lem:flauw}
The index of appearance $z$ of the
sequence $U(k)$ has the
following properties.
\begin{itemize}
\item[{\rm i)}] If $p\mid \Delta(k)$, then $z(p)=p$.
\item[{\rm ii)}] If $p\nmid \Delta(k)$, then $z(p)\mid p-e$, where $e=(\frac{\Delta(k)}{p})$.
\item[{\rm iii)}] Let $c=\nu_p(U_{z(p)}(k))$. Then $z(p^{b})=p^{\max\{b-c,0\}} z(p).$
\item[{\rm iv)}] If $p|U_m(k)$, then
$z(p)|m$.
\item[{\rm v)}] If $n=m_1\cdots m_s$ with $m_1,\ldots,m_s$ pairwise coprime, then
$$z(m_1\cdots m_s)={\text{\rm lcm}}[z(m_1),\ldots,z(m_s)].$$

\end{itemize}
\end{lemma}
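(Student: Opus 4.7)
The approach is to deduce all five items from the Binet formula $U_n(k)=(\alpha^n-\alpha^{-n})/(\alpha-\alpha^{-1})$ with $\alpha=\alpha(k)$, working in the quadratic order $\mathbb{Z}[\alpha]$ and its localisations at primes above $p$. Throughout I focus on odd primes $p$; the prime $2$ is already treated by Lemma~\ref{poweroftwo}.

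For (i), fix odd $p\mid \Delta(k)=16k(k+1)$. The characteristic polynomial has a double root $\bar\alpha\equiv 2k+1\pmod{p}$, and a short induction on the recursion establishes the degenerate Binet identity $U_n(k)\equiv n(2k+1)^{n-1}\pmod{p}$. Since $p\mid k(k+1)$ forces $2k+1\equiv \pm 1\pmod{p}$, the factor $(2k+1)^{n-1}$ is a unit, whence $p\mid U_n(k)$ if and only if $p\mid n$, giving $z(p)=p$. Next I would prove (iv) before (ii). In the unramified case $p\nmid\Delta(k)$, the element $\alpha-\alpha^{-1}$ is a unit modulo $p$ because its square equals $\Delta(k)$; since $\alpha^m-\alpha^{-m}=\alpha^{-m}(\alpha^{2m}-1)$ and $\alpha$ has norm one, the relation $p\mid U_m(k)$ is equivalent to $\alpha^{2m}\equiv 1$ in the residue field $\mathbb{F}_p$ (split case) or $\mathbb{F}_{p^2}$ (inert case). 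The set of such $m$ is a subgroup of $\mathbb{Z}$, hence of the form $z(p)\mathbb{Z}$; the ramified case is already handled by (i). For (ii), if $e=1$ then $\alpha\in\mathbb{F}_p$ and Fermat gives $\alpha^{p-1}\equiv 1$, while if $e=-1$ the Frobenius swaps the conjugates $\alpha,\alpha^{-1}$ so that $\alpha^p\equiv\alpha^{-1}$ and hence $\alpha^{p+1}\equiv 1$. Either way $\alpha^{p-e}\equiv 1$, and by (iv) this yields $z(p)\mid p-e$. Part (v) follows at once from (iv) combined with the Chinese Remainder Theorem, since $m_1\cdots m_s\mid U_n(k)$ iff $m_i\mid U_n(k)$ for every $i$ iff $z(m_i)\mid n$ for every $i$.

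The real work is in (iii), a lifting-the-exponent statement inside the order $\mathbb{Z}[\alpha]$. Setting $c=\nu_p(U_{z(p)}(k))\ge 1$, I would write $\alpha^{z(p)}=1+p^c\eta$ in the completion of $\mathbb{Z}[\alpha]$ at a prime above $p$, where $\eta$ is a $p$-adic unit; this again uses that $\alpha-\alpha^{-1}$ is a unit in the unramified case, with a direct computation via (i) in the ramified case. Expanding
$$
\alpha^{z(p)p^j}-1=(1+p^c\eta)^{p^j}-1=p^{c+j}\eta+\sum_{i=2}^{p^j}\binom{p^j}{i}p^{ic}\eta^i,
$$
and invoking $\nu_p\!\bigl(\binom{p^j}{i}\bigr)=j-\nu_p(i)$ together with $p$ odd and $c\ge 1$ (so that $(i-1)c>\nu_p(i)$ for every $i\ge 2$), one sees that $p^{c+j}\eta$ is the dominant term. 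Hence $\nu_p(\alpha^{z(p)p^j}-1)=c+j$, and passing back to $U_{z(p)p^j}(k)$ via the unit $\alpha-\alpha^{-1}$ gives $\nu_p(U_{z(p)p^j}(k))=c+j$. Combined with (iv) applied to $p^b$, this yields $z(p^b)=p^{\max\{b-c,0\}}z(p)$.

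The principal obstacle is carrying out the $p$-adic lifting in (iii) uniformly across the split, inert, and ramified cases for $p$ inside $\mathbb{Q}(\sqrt{k(k+1)})$, and checking that the various unit factors (such as $\alpha-\alpha^{-1}$ and the binomial coefficients) do not interfere with the valuation computation. All remaining items are essentially formal once (iv) is in place, and for greater generality one can of course invoke Bilu--Hanrot~\cite{BH} or Ribenboim~\cite{Ri}.
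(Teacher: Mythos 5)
Your sketch is correct in substance, but it takes a genuinely different route from the paper: the paper does not prove Lemma \ref{lem:flauw} at all, declaring it well-known and citing Bilu--Hanrot \cite{BH} (and Ribenboim), whereas you supply a self-contained argument via the Binet formula and $p$-adic lifting in $\mathbb{Z}[\alpha]$. What your approach buys is transparency and uniformity with the techniques the paper actually uses later (e.g.\ the norm computations and the inert/ramified case distinctions in Lemmas \ref{z(pb)}, \ref{zpb}, \ref{elf}, \ref{tweedeelgevallen} are close relatives of your arguments); what the citation buys is brevity and the avoidance of exactly the bookkeeping you flag. Three small points deserve attention if you were to write the sketch out in full. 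First, in (iii) you should expand around $\pm 1$ rather than $1$: one may well have $\alpha^{z(p)}\equiv -1\pmod{p}$ (this is precisely the situation exploited in Lemma \ref{iseven}), which is harmless since $p^j$ is odd, but the displayed identity as written presumes the $+1$ case. Second, for the minimality half of (iii) you need $\nu_p(U_{z(p)m})=c+\nu_p(m)$ for arbitrary $m$ (or else the subgroup structure of $\{n: p^b\mid U_n\}$ modulo $p^b$), not only for $m=p^j$; the same binomial estimate gives this, but "(iv) applied to $p^b$" is not literally available since (iv) is stated for primes. Third, and similarly, your deduction of (v) uses "$m_i\mid U_n$ iff $z(m_i)\mid n$" for composite $m_i$, which again does not follow from (iv) alone; it is the regular divisibility property of the Lucas sequence, available here either by extending your subgroup argument prime power by prime power (using (iii) and the CRT) or from $\gcd(U_a,U_b)=U_{\gcd(a,b)}$, valid because the parameters $P=4k+2$, $Q=1$ are coprime. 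In the ramified case the valuations must be taken with respect to the prime $\pi$ above $p$ (so $\alpha-\alpha^{-1}$ contributes $\nu_\pi=\nu_\pi(2\sqrt{k(k+1)})$, which cancels), as you anticipate; none of these repairs requires a new idea.
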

Part i says that $z(p^b)=p^b$ in
case $p\mid \Delta(k)$ and
$b\ge 1$. The
next result describes what happens
for arbitrary $b$ and $p>2$.
\begin{lemma}
\label{speciaal}
Assume that $p>2$ is such that $p\mid \Delta(k)$. Let $z(p^b)$ be the index of appearance
of $p^b$ in the sequence
$U(k)$.
\begin{itemize}
\item[i)] If $p>3$, then $\nu_p(U_p)=1$. In particular, $z(p^b)=p^b$ holds for all $b\ge 1$.
\item[ii)] If $p=3$, then
$$
U_3=16k(k+1)+3.
$$
In particular, $\nu_3(U_3)=c>1$ exactly when $k\equiv 2,6\pmod* 9$. In these cases, $z(p^b)=p^{\max\{b-c,0\}}$.
Hence, $z(p^b)\mid p^{b-1}$ for all $b\ge 2$.
\end{itemize}
\end{lemma}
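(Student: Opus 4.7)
The plan is to expand $\alpha(k)^{p}$ via the binomial theorem, analyze the $p$-adic valuation of $U_p(k)$ term by term, and then invoke Lemma \ref{lem:flauw}~iii. Writing $A=2k+1$ and $B=2\sqrt{k(k+1)}$ gives $\alpha(k)=A+B$, and the identity $A^{2}-B^{2}=1$ forces $\alpha(k)^{-1}=A-B$. Subtracting the two binomial expansions kills the even powers of $B$, and dividing by $\alpha(k)-\alpha(k)^{-1}=2B$ yields
\begin{equation*}
U_p(k) \;=\; \sum_{j=0}^{(p-1)/2} \binom{p}{2j+1}\,(2k+1)^{p-1-2j}\,\bigl(4k(k+1)\bigr)^{j}.
\end{equation*}
Since $p>2$ and $p\mid \Delta(k)=16k(k+1)$, we have $p\mid k(k+1)$ and $p\nmid 2k+1$.

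For part (i), with $p\ge 5$, I estimate $\nu_p$ of each summand. The $j=0$ term equals $p(2k+1)^{p-1}$, with $\nu_p=1$. For $1\le j\le (p-3)/2$, the binomial coefficient $\binom{p}{2j+1}$ carries one factor of $p$ and $\bigl(4k(k+1)\bigr)^{j}$ carries at least one more, so $\nu_p\ge 2$. The only delicate summand is $j=(p-1)/2$, where $\binom{p}{p}=1$ removes the ``automatic'' factor of $p$; here the term is $\bigl(4k(k+1)\bigr)^{(p-1)/2}$, and the inequality $(p-1)/2\ge 2$ (which fails exactly at $p=3$) nevertheless forces $\nu_p\ge 2$. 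Summing, $\nu_p(U_p(k))=1$, and Lemma \ref{lem:flauw}~iii with $c=1$ delivers $z(p^{b})=p^{b}$ for every $b\ge 1$.

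For part (ii), with $p=3$, the sum has only two terms and collapses to $U_3(k)=3(2k+1)^{2}+4k(k+1)=16k(k+1)+3$, as claimed. To determine when $\nu_3(U_3)>1$, I would, under the standing hypothesis $3\mid k(k+1)$, reduce $U_3(k)$ modulo $9$ on each of the six residue classes $k\in\{0,2,3,5,6,8\}\pmod{9}$ and check directly that $9\mid U_3(k)$ precisely when $k\equiv 2$ or $k\equiv 6\pmod{9}$. With $c=\nu_3(U_3)$ so computed, the formula for $z(3^{b})$ follows from Lemma \ref{lem:flauw}~iii together with $z(3)=3$ (Lemma \ref{lem:flauw}~i); the divisibility $z(3^{b})\mid 3^{b-1}$ for $b\ge 2$ is then immediate from $c\ge 2$.

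The main obstacle is the extreme summand $j=(p-1)/2$: it is exactly this term that distinguishes $p=3$ from $p\ge 5$ (through the failure of $(p-1)/2\ge 2$) and is responsible for producing the two exceptional residues $k\equiv 2,6\pmod{9}$.
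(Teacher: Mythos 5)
Your proposal is correct, and it is more self-contained than the paper's own proof. For part (i) the paper simply writes ``Part i is known'' and gives no argument, whereas you actually prove $\nu_p(U_p)=1$ by expanding $\alpha^p-\alpha^{-p}$ binomially to get $U_p=\sum_{j=0}^{(p-1)/2}\binom{p}{2j+1}(2k+1)^{p-1-2j}(4k(k+1))^{j}$ and reading off valuations: the $j=0$ term has valuation exactly $1$ (using $p\nmid 2k+1$, which holds since $p\mid k(k+1)$), the middle terms have valuation $\ge 2$ from $p\mid\binom{p}{2j+1}$ and $p\mid k(k+1)$, and the extreme term $(4k(k+1))^{(p-1)/2}$ has valuation $\ge(p-1)/2\ge 2$ precisely when $p\ge 5$ --- which also transparently explains why $p=3$ is the exceptional case. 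For part (ii) your route is essentially the paper's: both reduce to $U_3=16k(k+1)+3$ (the paper via $U_3=\alpha^2+1+\alpha^{-2}$, you as the two-term instance of your expansion), and your check of the six residues $k\in\{0,2,3,5,6,8\}\pmod 9$ is an equivalent, if more brute-force, version of the paper's case split $k=3k_0$ versus $k+1=3k_1$; the verification indeed gives $9\mid U_3$ exactly for $k\equiv 2,6\pmod 9$. One small caution, which concerns the statement rather than your argument: Lemma \ref{lem:flauw}~iii together with $z(3)=3$ literally yields $z(3^b)=3^{\max\{b-c,0\}}\cdot z(3)$, not $3^{\max\{b-c,0\}}$ as displayed in the lemma, so your phrase ``the formula for $z(3^b)$ follows'' inherits that off-by-a-factor-of-$3$ wrinkle from the paper; the operative conclusion you draw, $z(3^b)\mid 3^{b-1}$ for $b\ge 2$ when $c\ge 2$, is nevertheless correct, as is your deduction $z(p^b)=p^b$ in part (i) (there you implicitly also use $z(p)=p$ from Lemma \ref{lem:flauw}~i, which you invoke explicitly only in part (ii)).
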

\begin{proof}
Recall that $\Delta(k)=16k(k+1)$.
Part i is known. As
for ii, we compute
$$
U_3=\frac{\alpha^3-\alpha^{-3}}{\alpha-\alpha^{-1}}=\alpha^2+1+\alpha^{-2}=16k(k+1)+3.
$$
Since
by assumption $3\mid 16k(k+1)$, it follows that either $3\mid k$ or $3\mid (k+1)$. In the first case, $k=3k_0$ and
$$
U_3=3(16k_0(3k_0+1)+1).
$$
The number in parenthesis is congruent to $16k_0+1\pmod*{3}$, which is a multiple of $3$ exactly when $k_0\equiv 2\pmod*{3}$; hence, $k\equiv 6\pmod*{9}$. In the second case, $k+1=3k_1$, so
$$
U_3=3(16k_1(3k_1-1)+1)
$$
and the number in parenthesis is congruent to $-16k_1+1\pmod*{3}$ which is a multiple of $3$ exactly when $k_1\equiv 1\pmod*3$, so 
$k\equiv 2\pmod*{9}$.
\end{proof}
\subsection{Index of appearance in case $k=1$}
For notational convenience we ignore
where appropriate the index $k=1$ in $U(k),~\alpha(k),~Ê\beta(k)$ and so we only write $U,~Ê\alpha,~\beta$.
We have $\Delta(1)=8$ and the relevant
quadratic field is ${\mathbb K}={\mathbb Q}[{\sqrt{2}}]$,
which has ${\mathbb Z}[{\sqrt{2}}]$
as its ring of integers.
If $\gamma,\delta\in \mathbb Z[\sqrt{2}]$,
then we write $\gamma \equiv \delta 
\pmod*{p}$
if and only if $(\gamma-\delta)/p\in \mathbb Z[\sqrt{2}]$. If $\rho=a+b\sqrt{2}\in {\mathbb K}$
with $a$ and $b$ rational numbers, then
the norm $N_{\mathbb K}(\rho)=\rho \cdot {\overline \rho}=a^2-2b^2,$ where ${\overline \rho}$ is the conjugate
of $\rho$ obtained by sending $\sqrt{2}$ to $-\sqrt{2}$.
\par For odd $p$, $z(p)$ is
a divisor of either $p-1$
or $p+1$ by Lemma \ref{lem:flauw} ii. The next lemma shows
that even more is true.
\begin{lemma}
\label{z(pb)}
Let $k=1$ and $p$ be an odd prime.
Then
$$z(p^b)\mid p^{b-1}\Big(p-\Big(\frac{2}{p}\Big)\Big)/2.$$
\end{lemma}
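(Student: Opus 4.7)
The plan is to reduce the claim to the base case $b = 1$ and then analyze the multiplicative order of $\beta = 1 + \sqrt{2}$ modulo $p$. Since by definition $c := \nu_p(U_{z(p)}) \ge 1$, Lemma \ref{lem:flauw} iii gives $z(p^b) = p^{\max\{b-c,\,0\}} z(p)$, so in particular $z(p^b) \mid p^{b-1} z(p)$. Consequently it suffices to prove the divisibility $z(p) \mid \bigl(p - \bigl(\tfrac{2}{p}\bigr)\bigr)/2$.

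For this I would pass to a prime $\mathfrak{p}$ of $\Z[\sqrt{2}]$ lying over $p$; the residue field is $\mathbb{F}_p$ in the split case $\bigl(\tfrac{2}{p}\bigr) = 1$ and $\mathbb{F}_{p^2}$ in the inert case $\bigl(\tfrac{2}{p}\bigr) = -1$. Since $\beta^2 - \beta^{-2} = 4\sqrt{2}$ is a unit in the residue field for every odd $p$, the identity $U_n = (\beta^{2n} - \beta^{-2n})/(\beta^2 - \beta^{-2})$ shows that $p \mid U_n$ precisely when $\beta^{4n} \equiv 1 \pmod{\mathfrak{p}}$. Hence $z(p) = e/\gcd(e, 4)$, where $e$ is the order of $\beta$ modulo $\mathfrak{p}$.

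In the split case one simply uses $e \mid p - 1$ and checks, by a short case split on $\nu_2(e)$, that $e/\gcd(e, 4)$ always divides $(p - 1)/2$. The inert case requires the extra input coming from the norm relation $N_{\K}(\beta) = -1$: since the Frobenius $x \mapsto x^p$ on $\mathbb{F}_{p^2}$ coincides with the Galois involution $\sqrt{2} \mapsto -\sqrt{2}$, one finds $\beta^p = \overline{\beta} = -\beta^{-1}$ and hence $\beta^{p+1} = -1$. This forces $e$ to have the form $2e'$ with $e' \mid p + 1$ and $\nu_2(e') = \nu_2(p + 1) \ge 1$; since $e'$ is then even, one gets $z(p) = e'/2$, which divides $(p + 1)/2$ because $e' \mid p + 1$.

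The main obstacle---and the source of the factor-of-two improvement over the standard bound $z(p) \mid p - \bigl(\tfrac{2}{p}\bigr)$ supplied by Lemma \ref{lem:flauw} ii---is the inert case. The essential point is to squeeze from $N_{\K}(\beta) = -1$ the sharp identity $\beta^{p+1} = -1$, and not merely its square $\beta^{2(p+1)} = 1$, and then to translate the minus sign into the correct constraint on the $2$-adic valuation of $e$. Once that is done, the divisibility $z(p) \mid (p + 1)/2$ follows automatically, and combined with the split case and the base-case reduction it yields the claim.
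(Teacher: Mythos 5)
Your proof is correct and rests on the same key input as the paper's: the Frobenius/Euler computation giving $\beta^{p-1}\equiv 1\pmod p$ in the split case and $\beta^{p+1}\equiv -1\pmod p$ in the inert case, together with the lifting $z(p^b)\mid p^{b-1}z(p)$ from Lemma \ref{lem:flauw} iii. The only difference is bookkeeping: the paper turns these congruences directly into $p\mid U_{(p\mp1)/2}$ by subtracting the conjugate congruence and taking norms, whereas you characterize $z(p)=e/\gcd(e,4)$ with $e$ the order of $\beta$ modulo a prime above $p$ and do the $2$-adic case analysis; both conclude identically.
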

\begin{proof}\hfil\break
\noindent i) The case $e=(\frac{2}{p})=1$.
\medskip
\par \noindent Then $2^{(p-1)/2}\equiv 
1\pmod* p$. We have
$$
\beta^p=(1+{\sqrt{2}})^p\equiv 1+2^{p/2}\equiv 1+{\sqrt{2}} \cdot 2^{(p-1)/2} \equiv\beta\pmod*{p}.
$$
Here we used Euler's theorem that $2^{(p-1)/2}\equiv e\pmod* p$. Since $\beta$ is a unit, we
infer from $\beta^{p}\equiv \beta\pmod* p$ that
$\beta^{p-1}\equiv 1\pmod* p$. Thus,
$$
\alpha^{(p-1)/2}=(\beta^2)^{(p-1)/2}=\beta^{p-1}\equiv 1\pmod*{p}.
$$
The same congruence holds for $\alpha$ replaced by $\alpha^{-1}$. Hence, subtracting the
two congruences we get that $\alpha^{(p-1)/2}-\alpha^{-(p-1)/2}\equiv 
0\pmod* p$. Thus, $p$ divides the
difference
$\alpha^{(p-1)/2}-\alpha^{-(p-1)/2}$.
On noting that
$$N_{\mathbb K}(\alpha^{(p-1)/2}-\alpha^{-(p-1)/2})=32U_{(p-1)/2}^2,$$ we infer that
$p\mid U_{(p-1)/2}$.
Thus, $z(p)\mid (p-1)/2$, therefore $z(p^b)$ divides $p^{b-1}(p-1)/2$ by Lemma \ref{lem:flauw} iii.
\medskip \hfil\break
\noindent ii) The case $e=(\frac{2}{p})=-1$. \hfil\break
\medskip
\noindent Then $2^{(p-1)/2}\equiv -1\pmod* p$.
Now we have
$$
\beta^p=(1+{\sqrt{2}})^p\equiv 1+2^{p/2}\equiv 1+{\sqrt{2}} \cdot 2^{(p-1)/2} \equiv -\beta^{-1}\pmod*{p}.
$$
Thus, $\beta^{p+1}\equiv -1\pmod* p$. In particular,
\begin{equation}
\label{eq:minuseen}
\alpha^{(p+1)/2}=(\beta^2)^{(p+1)/2}=\beta^{p+1}\equiv -1\pmod*{p}.
\end{equation}
The same congruence holds for $\alpha$ replaced by $\alpha^{-1}$. Subtracting the
two congruences, we get that $\alpha^{(p+1)/2}-\alpha^{-(p+1)/2}\equiv 0\pmod* p$.
Noting that $$N_{\mathbb K}(\alpha^{(p+1)/2}-\alpha^{-(p+1)/2})=32U_{(p+1)/2}^2,$$ we obtain that
$p\mid U_{(p+1)/2}$.
We have,
in particular, $z(p)\mid (p+1)/2$ and
hence $z(p^b)\mid p^{b-1}(p+1)/2$
by Lemma \ref{lem:flauw} iii.
\end{proof}

Let us recall the following well-known result.

\begin{lemma}
\label{zpb}
Let $p$ be odd such that $e=(\frac{2}{p})=-1$ and
let $b\ge 1$ be an integer. Then $z(p^b)$ is the minimal $m\ge 1$ such that $\alpha^m\equiv \pm 1\pmod*{p^b}$.
\end{lemma}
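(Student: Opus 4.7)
\textbf{Proof plan for Lemma \ref{zpb}.}

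The plan is to translate divisibility of $U_m$ by $p^b$ into a congruence on $\alpha^m$ in the ring $\mathbb{Z}[\sqrt{2}]$, and then exploit the fact that under the hypothesis $\bigl(\tfrac{2}{p}\bigr)=-1$ the prime $p$ is inert, so $\mathbb{Z}[\sqrt{2}]/p^b$ is a local ring in which $x^2=1$ forces $x=\pm 1$.

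First, recall $U_m(\alpha-\alpha^{-1})=\alpha^m-\alpha^{-m}$ and $\alpha-\alpha^{-1}=2\sqrt{2}$ for $k=1$. Since $p$ is odd, $\gcd(p^b,2\sqrt{2})=1$ in $\mathbb{Z}[\sqrt{2}]$ (the norm of $2\sqrt{2}$ is $-8$), so $p^b\mid U_m$ in $\mathbb{Z}$ is equivalent to $\alpha^m\equiv \alpha^{-m}\pmod*{p^b}$ in $\mathbb{Z}[\sqrt{2}]$. Multiplying by the unit $\alpha^m$ gives the equivalent congruence $\alpha^{2m}\equiv 1\pmod*{p^b}$. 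Hence $z(p^b)$ equals the least $m\ge 1$ with $\alpha^{2m}\equiv 1\pmod*{p^b}$.

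Now use the inertia hypothesis. Because $\bigl(\tfrac{2}{p}\bigr)=-1$, the ideal $p\mathbb{Z}[\sqrt{2}]$ is prime, and $\mathbb{Z}[\sqrt{2}]/p^b$ is a local ring with maximal ideal $p\mathbb{Z}[\sqrt{2}]/p^b$ and residue field $\mathbb{F}_{p^2}$ of odd characteristic. Suppose $\alpha^{2m}\equiv 1\pmod*{p^b}$; then in this local ring $(\alpha^m-1)(\alpha^m+1)=0$. The elements $\alpha^m-1$ and $\alpha^m+1$ differ by $2$, which is a unit, so they cannot both lie in the maximal ideal. Thus one of them is a unit, and multiplying the factorization by its inverse yields $\alpha^m\equiv \pm 1\pmod*{p^b}$. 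The converse implication is trivial by squaring. Combining, $\alpha^{2m}\equiv 1\pmod*{p^b}$ and $\alpha^m\equiv\pm 1\pmod*{p^b}$ are equivalent.

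Putting the two equivalences together, $z(p^b)$ is the minimal $m\ge 1$ with $\alpha^m\equiv \pm 1\pmod*{p^b}$, as claimed. The only place where any subtlety creeps in is the step "$(\alpha^m)^2=1$ implies $\alpha^m=\pm 1$"; this is where the inertia assumption on $p$ is essential, since in a split or ramified situation the quotient would not be local and a nontrivial idempotent could produce extra square roots of $1$. Everything else is a direct manipulation of the closed form for $U_m$.
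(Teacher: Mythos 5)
Your argument is correct and follows essentially the same route as the paper: both directions rest on the inertness of $p$ in $\mathbb{Z}[\sqrt{2}]$, which lets one split $(\alpha^m-1)(\alpha^m+1)\equiv 0 \pmod{p^b}$ because the two factors differ by the unit $2$; the paper phrases your local-ring step as ideal divisibility and deduces $p^b\mid U_m$ from $\alpha^m\equiv\pm 1$ by a norm computation rather than by your (equally valid) observation that $\alpha^m-\alpha^{-m}$ is $U_m$ times an element coprime to $p$. One cosmetic slip: for $k=1$ one has $\alpha-\alpha^{-1}=4\sqrt{2}$ (of norm $-32$), not $2\sqrt{2}$, but this is harmless since it is still coprime to the odd prime $p$.
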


\begin{proof} Assume that $m\ge 1$ is such that $\alpha^m\equiv \varepsilon \pmod*{p^b}$ for some $\varepsilon\in \{1,-1\}$. Then $\alpha^{-m}\equiv \varepsilon \pmod*{p^b}$. Subtracting both congruences we get that $p^b$ divides $\alpha^m-\alpha^{-m}$. Computing
norms
we see that
$p^{2b}\mid N_{\mathbb K}(\alpha^m-\alpha^{-m})$,
and so $p^{2b}\mid 32U_m^2,$
and therefore $p^b|U_m$, showing that $z(p^b)|m$.
Next assume that $p^b\mid U_m$ for
some $m\ge 1$. Then $\alpha^m\equiv \alpha^{-m}\pmod*{p^b}$, so $\alpha^{2m}\equiv 1\pmod*{p^b}$. Thus, $p^b\mid (\alpha^m-1)(\alpha^m+1)$.
The assumption on $e$ implies that $p$ is inert in ${\mathbb Z}[{\sqrt{2}}]$. Since, moreover,
$p$ cannot divide both $\alpha^m-1$ and $\alpha^m+1$,
it follows that $p^b$ must divide
either $\alpha^m-1$ or $\alpha^m+1$. 
\end{proof}

\section{Structure of the discriminator ${\mathcal D}_1$}

Now we are ready to restrict the number of
values the discriminator can assume.

\begin{lemma}
\label{restrict}
Let $m={\mathcal D}_1(n)$ for some $n>1$. Then
\begin{itemize}
\item[{\rm i)}] $m$ has at most one odd prime
divisor.
\item[{\rm ii)}] If $m$ is divisible by
exactly one odd prime $p$, then $e=(\frac{2}{p})=-1$ and $z(p)=(p+1)/2$.
\item[{\rm iii)}]
If $m$ is not a power of $2$, then $m$ can
be written as $2^ap^{b}$ with $a,b\ge 1$ and $p\equiv 5\pmod* 8$. 
\end{itemize}
\end{lemma}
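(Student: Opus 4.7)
\emph{Proof plan.}
I exploit the bound $\mathcal{D}_1(n)\le v_n<2n$, where $v_n$ is the smallest power of two with $v_n\ge n$ (corollary to Lemma \ref{poweroftwo}); hence for $m=\mathcal{D}_1(n)$ we have $n>m/2$. Let $N(m)$ denote the smallest $j\ge1$ for which $U_i\equiv U_j\pmod{m}$ for some $0\le i<j$. Pairwise incongruence of $U_0,\ldots,U_{n-1}$ modulo $m$ forces $N(m)\ge n>m/2$, so the task is to show $N(m)\le m/2$ in every situation the lemma forbids. Two upper bounds drive this: (a) the trivial $N(m)\le z(m)$, from $U_0\equiv U_{z(m)}\equiv0\pmod{m}$; and (b) the \emph{reflection bound} $N(p^b)\le\lfloor z(p^b)/2\rfloor+1$ for odd $p\mid m$ with $p$ inert, which comes from the identity $U_i\equiv U_{z(p^b)-i}\pmod{p^b}$. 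The latter in turn rests on lifting \eqref{eq:minuseen} to $\alpha^{z(p^b)}\equiv-1\pmod{p^b}$: Lemma \ref{zpb} gives $\alpha^{z(p^b)}\equiv\pm1\pmod{p^b}$, and Lemma \ref{lem:flauw}(iii) makes $z(p^b)/z(p)$ an odd power of $p$, so the sign is inherited from reducing modulo $p$.

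For part (i), write $m=2^a p_1^{b_1}\cdots p_s^{b_s}$. Lemma \ref{lem:flauw}(v), the equality $z(2^a)=2^a$ (from Lemma \ref{poweroftwo}), and Lemma \ref{z(pb)} combine to give
\[
\frac{z(m)}{m}\le\prod_{i=1}^{s}\frac{p_i+1}{2p_i}.
\]
For $s\ge2$ distinct odd primes (each $\ge3$) this is at most $(2/3)(3/5)=2/5<1/2$, so $N(m)\le z(m)<m/2$, contradicting $N(m)>m/2$. Hence $s\le1$. For part (ii), now $m=2^a p^b$ and the same bound yields $z(m)/m\le(p+1)/(2p)$, which does exceed $1/2$; however, if $z(p^b)$ were a proper divisor of $p^{b-1}(p+1)/2$, a drop by a factor of at least two would push the ratio down to $(p+1)/(4p)<1/2$, a contradiction. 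Therefore $z(p^b)=p^{b-1}(p+1)/2$, which by Lemma \ref{z(pb)} requires $e=-1$, and setting $b=1$ gives $z(p)=(p+1)/2$.

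For part (iii), assume $m$ is not a power of two, so $m=2^a p^b$ with $e=-1$ and $z(p^b)=p^{b-1}(p+1)/2$; I must show $p\equiv5\pmod8$ and $a\ge1$. If $(p+1)/2$ is even, then $r:=z(p^b)$ is even, and for $a\ge1$ the inequality $\gcd(2^a,r)\ge2$ gives $z(m)\le2^{a-1}r$, hence $z(m)/m\le(p+1)/(4p)<1/2$, contradiction; while for $a=0$ the reflection bound gives $N(p^b)\le r/2+1$, and $r/2+1\le p^b/2$ reduces to $p^{b-1}(p-1)\ge4$, which fails only at $(p,b)=(3,1)$. The stray case $m=3$ is ruled out directly: it would force $n\le N(3)=2$ together with $n>1$, hence $n=2$, contradicting $\mathcal{D}_1(2)=2$. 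Thus $(p+1)/2$ is odd, i.e., $p\equiv5\pmod8$. Finally, to rule out $a=0$ when $p\equiv5\pmod8$: now $r$ is odd, and the reflection bound yields $N(p^b)\le(r+1)/2\le(p^b-1)/2<m/2$ (equivalent to $p^{b-1}(p-1)\ge2$, which holds for $p\ge3$), once more contradicting $N(m)>m/2$.

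\emph{Main obstacle.} The principal technical step is the reflection identity $U_i\equiv U_{z(p^b)-i}\pmod{p^b}$ and the Hensel-style lifting $\alpha^{z(p^b)}\equiv-1\pmod{p^b}$; everything else is a careful but routine accounting of the numerical inequalities $(p+1)/(2p)$ versus $1/2$, together with the isolated direct verification at $m=3$.
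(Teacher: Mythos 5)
Your argument is correct, and for parts (i), (ii) and the exclusion of $p\equiv 3\pmod*{8}$ in part (iii) it is essentially the paper's own proof: the bound $z(m)\le m\prod_i (p_i+1)/(2p_i)$ forces $z(m)<m/2$, and your bookkeeping via $N(m)\ge n>m/2$ (from ${\mathcal D}_1(n)\le v_n<2n$) is just a repackaging of the paper's step of locating a power of two in $[z(m),2z(m))$. Where you genuinely go beyond the paper is the claim $a\ge 1$ in part (iii): the paper's proof of this lemma merely asserts ``we know that $a\ge 1$'' and only establishes evenness in the subsequent Lemma \ref{iseven}, by exactly the reflection device you use ($\alpha^{z(p^b)}\equiv -1\pmod*{p^b}$, hence $U_i\equiv U_{z(p^b)-i}\pmod*{p^b}$, hence a collision by index about $z(p^b)/2$); the paper obtains the lifting by an induction on the exponent, while you get it from Lemma \ref{zpb} together with the sign read off modulo $p$ from \eqref{eq:minuseen}, using that $z(p^b)/z(p)=p^{b-1}$ is odd --- an equally valid and arguably cleaner route, and your isolated check at $m=3$ plays the role of the paper's check of the small cases $m\le 9$. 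Two harmless slips: in part (ii) the conclusion $z(p)=(p+1)/2$ is not obtained by ``setting $b=1$'' (the exponent $b$ is fixed) but by comparing prime-to-$p$ parts in $z(p^b)=p^{\max\{b-c,0\}}z(p)$ from Lemma \ref{lem:flauw}(iii), or, as in the paper, by noting that a proper divisor $z(p)$ of $(p+1)/2$ would give $z(m)\le 2^ap^{b-1}(p+1)/4<m/2$; and in the last step the inequality $(r+1)/2\le (p^b-1)/2$ is equivalent to $p^{b-1}(p-1)\ge 4$, not $\ge 2$, though this holds anyway since $p\equiv 5\pmod*{8}$ forces $p\ge 5$ (comparing directly with $p^b/2$ is what yields the constant $2$ you wrote).
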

\begin{proof}
Assume that ${\mathcal D}_1(n)=m$ and write it as
$$
m=2^a p_1^{b_1}\cdots p_r^{b_r},
$$
where the $p_i$ are distinct
odd primes.
Assume first that $r\ge 2$. Then $n\le z(m)$ (otherwise if $z(m)<n$, it follows that $U_{z(m)}\equiv 
U_0\pmod* m$, a contradiction). On recalling (Lemma \ref{lem:flauw} v) that
$
z(m)={\text{\rm lcm}}[z(2^a),z(p_1^{b_1}),\ldots,z(p_r^{b_r})],
$
we obtain the inequality
\begin{equation}
\label{zmm2}
z(m)\le  2^ap_1^{b_1-1}\cdots p_r^{b_k-1} \left(\frac{p_1+1}{2}\right)\cdots \left(\frac{p_r+1}{2}\right)<\frac{m}{2},
\end{equation}
where the last inequality needs proof.
Indeed, it is equivalent with the inequality
$$
\prod_{i=1}^r \left(\frac{p_i+1}{2}\right)<p_1\cdots p_r.
$$
It suffices to justify that
$$
\left(\frac{p_1+1}{2}\right)\left(\frac{p_2+1}{2}\right)<\frac{p_1p_2}{2}\quad {\text{\rm and}}\quad \frac{p_i+1}{2}<p_i\quad {\text{\rm for}}\quad  i=3,\ldots,r.
$$
The second inequality is clear. The first is equivalent to $p_1p_2>p_1+p_2+1$. Assuming $3\le p_1<p_2$, this inequality is implied by $p_2(p_1-2)>1$, which is obviously satisfied. \par
Since $z(m)<m/2$ by \eqref{zmm2}, it follows that the interval $[z(m),2z(m))$ contains a power of $2$, say $2^b< 2z(m)<m$. But then since $2^b\ge z(m)\ge n$, it follows that $U_0,\ldots,U_{n-1}$ are already distinct modulo $2^b$ and $2^b<m$, which contradicts the definition of the discriminator.
Thus, the only possibility is that $r\in \{0,1\}$. If $r=1$ and $e_1=(\frac{2}{p_1})=1$, then $$z(m)=z(2^ap_1^{b_1})\le 2^ap_1^{b_1-1}(p_1-1)/2<m/2,$$
and so the same contradiction applies. Assume now that $e_1=-1$ and that  $z(p_1)$ is a proper divisor of $(p+1)/2$. Then $$z(m)\le 2^a p_1^{b_1-1}z(p_1)\le 2^ap_1^{b_1-1} (p_1+1)/4<m/2,$$ and again the same contradiction applies.
\par It remains to prove part iii. We write $m=2^ap_1^{b_1}$. We know that $a\ge 1$ and $e=-1$. Thus, $p\equiv \pm 3\pmod* 8$. 
If $p\equiv 3\pmod* 8$, then
$$
z(m)={\text{\rm lcm}}[z(2^a), z(p^{b})]\mid 2^{a} p^{b-1}(p+1)/4.
$$
In particular, $z(m)<m/2$, and we get again a contradiction. Thus, $p\equiv 5\pmod* 8$. 
\end{proof}

\begin{lemma}
\label{iseven}
If $n>1$, then ${\mathcal D}_{1}(n)$ is even.
\end{lemma}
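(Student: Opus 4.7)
My plan is to derive the lemma from Lemma~\ref{restrict}, modulo one subtle point. First, for $n>1$ the discriminator $m:=\mathcal{D}_1(n)$ must separate $U_0=0$ from $U_1=1$, so $m\ge 2$. By Lemma~\ref{restrict}(i,iii), $m$ is either a power of~$2$ (and therefore even, since $m\ge 2$) or of the form $m=2^ap^b$ with $a,b\ge 1$ (also even, thanks to the factor $2^a$). Hence $m$ is always even.

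The genuine content is the clause ``$a\ge 1$'' inside Lemma~\ref{restrict}(iii), which is what rules out $m$ being a pure odd prime power and is logically equivalent to the present lemma; this is where the main work lies. Assuming for contradiction that $m=p^b$ with $p$ odd, parts (i) and (ii) of Lemma~\ref{restrict} give $(\frac{2}{p})=-1$, $z(p)=(p+1)/2$, and hence $z(p^b)=(p+1)p^{b-1}/2$. I would then exhibit a small-index collision $U_i\equiv U_j\pmod{p^b}$ with $j\le (p+1)p^{b-1}/4+1$. For $p\equiv 3\pmod 8$ the identity $U_j-U_i=V_{(j+i)/2}U_{(j-i)/2}$ (valid when $j\equiv i\pmod 2$), together with a $p$-adic lifting of the multiplicative order of $\alpha$ from $(\mathbb{Z}[\sqrt{2}]/p)^{\times}$ to $(\mathbb{Z}[\sqrt{2}]/p^b)^{\times}$ (the latter order being $(p+1)p^{b-1}$), shows that $p^b\mid V_s$ first at $s=(p+1)p^{b-1}/4$, producing a collision at $(i,j)=(s-1,s+1)$. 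For $p\equiv 5\pmod 8$ the integer $z(p^b)$ is odd, and one switches to the companion Lehmer sequence with roots $(\beta,\beta^{-1})$ (recall $\beta^2=\alpha$): the analogous $V$-quantity is divisible by $p^b$ first at the odd index $z(p^b)$, yielding the collision $(i,j)=((z(p^b)-1)/2,(z(p^b)+1)/2)$.

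To conclude, I would compare $j$ to powers of~$2$. Since $p^b$ is odd, the largest power of~$2$ strictly less than $p^b$ is $\ge (p^b+1)/2$, and the elementary inequality $(p+1)p^{b-1}/4+1\le (p^b+1)/2$ (which reduces to $p^{b-1}(p-1)\ge 2$ and holds for every odd prime $p\ge 3$ and every $b\ge 1$) places $j$ below this power of~$2$. If the collision already lies within $\{U_0,\ldots,U_{n-1}\}$ we immediately contradict that $m$ discriminates; otherwise $n\le j$, so the largest power of~$2$ less than $p^b$ is $\ge n$ and hence discriminates $U_0,\ldots,U_{n-1}$ by Lemma~\ref{poweroftwo}, contradicting the minimality of $\mathcal{D}_1(n)=m$. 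The main obstacle throughout is the explicit $p$-adic lifting needed to pinpoint the smallest index $s$ with $p^b\mid V_s$ (or its Lehmer analogue), and the case split on the residue of $p$ modulo~$8$ driven by the parity of $(p+1)/2$.
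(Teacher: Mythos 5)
Your argument is correct and in essence follows the paper's route: after Lemma \ref{restrict} reduces the problem to $m=p^b$ odd with $(\frac{2}{p})=-1$, both proofs manufacture a collision $U_i\equiv U_j\pmod{p^b}$ from the lifted congruence $\alpha^{p^{b-1}(p+1)/2}\equiv-1\pmod{p^b}$ with $i+j=p^{b-1}(p+1)/2$ and $j$ of size roughly $m/4$, and then beat $m$ by a smaller power of two via Lemma \ref{poweroftwo}. Three remarks on the differences. First, your caution about Lemma \ref{restrict}(iii) is warranted: the paper's proof of that part merely asserts $a\ge 1$ (and later, in Lemma \ref{2a5b}, the paper cites the present lemma for exactly this), so quoting (iii) would be circular; the separate argument you give is the one that is actually needed. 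Second, your case split on $p\bmod 8$ is unnecessary: the paper's computation ($\alpha^{j}\equiv-\alpha^{-i}$ and $\alpha^{-j}\equiv-\alpha^{i}$, subtract, take norms) yields $U_i\equiv U_j\pmod{p^b}$ for any pair with $i+j=p^{b-1}(p+1)/2$, with no parity constraint, so the Lehmer-level detour for $p\equiv 5\pmod 8$ can be dispensed with. Third, your claims that the order of $\alpha$ modulo $p^b$ is exactly $(p+1)p^{b-1}$, that $z(p^b)=p^{b-1}(p+1)/2$, and that $p^b\mid V_s$ ``first'' at $s=(p+1)p^{b-1}/4$ are not justified as stated (they can fail when $p^2\mid U_{z(p)}$), but they are also not needed: the elementary lift $x\equiv-1\pmod{p^t}\Rightarrow x^{p}\equiv-1\pmod{p^{t+1}}$ already gives the divisibility you use, and any drop in the order only produces a collision at an even smaller index, so this is an imprecision rather than a gap. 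Finally, your endgame is genuinely cleaner than the paper's: comparing $j\le (p+1)p^{b-1}/4+1\le (p^b+1)/2$ directly with the largest power of two below $p^b$ (which is at least $(p^b+1)/2$) removes the paper's need to check the small cases $p^b\le 9$ separately, since the paper instead compares with $m/2$ and therefore needs $p^{b-1}(p-1)>7$.
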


\begin{proof}
Assume that ${\mathcal D}_1(n)=m$ is odd. By the previous lemma, it follows that $m=p_1^{b_1}$, where $({2\over p_1})=-1$ and $z(p_1)=(p_1+1)/2$. Further, in this
situation \eqref{eq:minuseen} applies and
we have
$$
\alpha^{(p_1+1)/2}=-1+p_1\gamma
$$
for some algebraic integer $\gamma\in {\mathbb Z}[{\sqrt{2}}]$. By induction on $m\ge 0$ one establishes that
$$
\alpha^{p_1^m(p_1+1)/2}\equiv
-1\pmod*{p_1^{m+1}}.
$$
Let
$$
i=\left\lfloor \frac {p_1^{b_1-1}(p_1+1)}{4}\right\rfloor-1\quad {\text{\rm and}}\quad j=\frac{p_1^{b_1-1}(p_1+1)}{2}-\left(\left\lfloor \frac{p_1^{b_1-1}(p_1+1)}{4}\right\rfloor-1\right).
$$
Since $b_1\ge 1$ and $p_1\ge 3$, we have that $i\ge 0$. Further,
$$
j\ge \frac{p_1^{b_1-1}(p_1+1)}{2}-\frac{p_1^{b_1-1}(p_1+1)}{4}+1=\frac{p_1^{b_1-1}(p_1+1)}{4}+1\ge i+2,
$$
and
\begin{equation}
\label{afschatting}
j \le \frac{p_1^{b_1-1}(p_1+1)}{2}-\left(\frac{p_1^{b_1-1}(p_1+1)}{4}-\frac{3}{4}\right)+1= \frac{p_1^{b_1-1}(p_1+1)}{4}+\frac{7}{4}.
\end{equation}
Since $i+j=p_1^{b_1-1}(p_1+1)/2$, we have
$
\alpha^{i+j}\equiv -1\pmod*{p_1^{b_1}}.
$
Thus,
$$
\alpha^j\equiv -\alpha^{-i}\pmod*{p_1^{b_1}},
$$
and also
$$
\alpha^{-j}\equiv -\alpha^i\pmod*{p_1^{b_1}}.
$$
Taking the difference of the latter
two congruences we get that
$$
(\alpha^j-\alpha^{-j})-(\alpha^i-\alpha^{-i})\equiv 0\pmod*{p_1^{b_1}}.
$$
Thus, taking norms and using the fact that $p_1$ is inert in ${\mathbb K}$ and
so has norm $p_1^2$, we get
$
p_1^{2b_1}\mid N_{\mathbb K}((\alpha^j-\alpha^{-j})-(\alpha^i-\alpha^{-i}))$, that is
$$p_1^{2b_1}\mid32(U_{j}-U_i)^2,
$$
giving
$$
U_j\equiv U_i\pmod*{p_1^{b_1}}.
$$
Since $i<j$ and by assumption $U_0,\ldots,U_{n-1}$ are pairwise
distinct modulo $p_1^{b_1}$, it follows that
$j\ge n$ and hence, by \eqref{afschatting},
$$n\le \frac{p_1^{b_1-1}(p_1+1)}{4}+\frac{7}{4}.$$
We check when the right hand side is less than $m/2$. This gives
$$
\frac{p_1^{b_1-1}(p_1+1)}{4}+\frac{7}{4}<\frac{p_1^{b_1}}{2},
$$
or $2p_1^{b_1}>7+p_1^{b_1}+p_1^{b_1-1}$, which is equivalent to $p_1^{b_1-1}(p_1-1)>7$. This holds whenever  $p_1^{b_1}\ge 11$. Thus, only the cases $m=p_1^{b_1}\le 9$ need to be checked, so $n<9$. We check in this range
and we get no odd discriminant. Thus, indeed $n<m/2$, and by the previous argument we can now replace $m$ by a power of two in the interval $[m/2,m)$, and get a contradiction.
\end{proof}

\begin{lemma}
Assume that $m=2^ap_1^{b_1}={\mathcal D}_1(n)$
for some $n\ge 1$ and that $b_1\ge 1$. If $b_1>1$, then $p_1\| U_{z(p_1)}$.
\end{lemma}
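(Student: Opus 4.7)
The plan is to argue by contradiction: suppose $c := \nu_{p_1}(U_{z(p_1)}) \ge 2$ and derive $z(m) < m/2$, which by the squeezing argument already used in the proofs of Lemmas \ref{restrict} and \ref{iseven} contradicts $m = {\mathcal D}_1(n)$.

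First I would collect the structural information on $m$ provided by the earlier lemmas. Since $b_1 \ge 1$ we must have $m > 1$ and hence $n > 1$, so Lemma \ref{iseven} gives $a \ge 1$ and Lemma \ref{restrict}(iii) forces $p_1 \equiv 5 \pmod*{8}$. Lemma \ref{restrict}(ii) then yields $z(p_1) = (p_1+1)/2$, which is odd since $p_1 + 1 \equiv 6 \pmod*{8}$. Also, Lemma \ref{lem:flauw}(i) applied to $p = 2 \mid \Delta(1) = 8$ gives $z(2) = 2$, and with $U_2 = 6$ Lemma \ref{lem:flauw}(iii) yields $z(2^a) = 2^a$.

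Next I would bound $z(p_1^{b_1})$ using the contradiction hypothesis. By Lemma \ref{lem:flauw}(iii) and $c \ge 2$ (using $b_1 \ge 2$),
\[
z(p_1^{b_1}) = p_1^{\max\{b_1-c,0\}} \cdot \frac{p_1+1}{2} \le p_1^{b_1-2} \cdot \frac{p_1+1}{2}.
\]
Since $(p_1+1)/2$ is coprime to $2^a$, Lemma \ref{lem:flauw}(v) gives
\[
z(m) = \mathrm{lcm}[z(2^a), z(p_1^{b_1})] \le 2^a \cdot p_1^{b_1-2} \cdot \frac{p_1+1}{2} = 2^{a-1} p_1^{b_1-2}(p_1+1).
\]
The inequality $2^{a-1} p_1^{b_1-2}(p_1+1) < 2^{a-1} p_1^{b_1} = m/2$ reduces to $p_1 + 1 < p_1^2$, which is trivial.

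Finally I would invoke the standard squeezing step. As $m = {\mathcal D}_1(n)$, we must have $z(m) \ge n$ (else $U_0 \equiv U_{z(m)} \pmod*{m}$ with $z(m) < n$). Picking the smallest $e$ with $2^e \ge z(m)$ produces $n \le z(m) \le 2^e < 2 z(m) < m$. By the Corollary following Lemma \ref{poweroftwo}, the residues $U_0, \ldots, U_{n-1}$ are already pairwise distinct modulo $2^e$, contradicting the minimality of $m$. I don't expect a hard step here: everything reduces to the $p_1$-adic valuation formula for $z(p_1^{b_1})$ together with the coprimality $\gcd(2^a,(p_1+1)/2) = 1$ that the congruence class $p_1 \equiv 5 \pmod*{8}$ guarantees.
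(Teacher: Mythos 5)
Your proposal is correct and follows essentially the same route as the paper: assume $p_1^2\mid U_{z(p_1)}$, use Lemma \ref{lem:flauw}(iii) to get $z(p_1^{b_1})\le p_1^{b_1-2}(p_1+1)/2$, bound $z(m)$ by the lcm to conclude $z(m)<m/2$, and then invoke the standard power-of-two squeezing contradiction. The paper's proof is just a terser version of this (it even calls it ``trivial''), so there is nothing to flag.
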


\begin{proof}
This is trivial. Indeed, if $b_1>1$ and $p_1^2\mid U_{z(p_1)}$, then $z(p_1^{b_1})\mid p_1^{b_1-2}(p_1+1)/2$
by Lemma \ref{lem:flauw} iii. Thus, in this case
$$
z(m)\mid {\text{\rm lcm}}[2^a, p_1^{b_1-2}(p_1+1)/2]\mid 2^{a-1}p_1^{b_1-2} (p_1+1),$$
Since $2^{a-1}p_1^{b_1-2} (p_1+1)=m (p_1+1)/(2p_1^2)<m/2$, we have obtained
a contradiction.
\end{proof}

\begin{lemma}
\label{lem:last}
Assume that $m=2^ap_1^{b_1}$ is such that  $a\ge 1$, $p_1\equiv 5\pmod* 8$ and $z(p_1)=(p_1+1)/2$. Then
$
U_i\equiv U_j\pmod*{m}
$
holds if and only if
$i\equiv j\pmod*{z(m)}$.
\end{lemma}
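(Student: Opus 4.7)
The plan is to split $m = 2^a p_1^{b_1}$ by the Chinese Remainder Theorem and to convert each half of the congruence into a divisibility on a single Lucas term via the identity
$$U_j - U_i = U_{(j-i)/2}\,V_{(i+j)/2}, \qquad i \equiv j \pmod*{2},$$
already used in the proof of Lemma \ref{poweroftwo}.

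Two preliminary facts will drive everything. First, $p_1 \equiv 5 \pmod*{8}$ gives $(p_1+1)/2 \equiv 3 \pmod*{4}$, so Lemma \ref{lem:flauw} iii yields that $z(p_1^{b_1}) = p_1^{\max\{b_1-c,0\}}(p_1+1)/2$ is odd, where $c = \nu_{p_1}(U_{z(p_1)})$; combined with $z(2^a) = 2^a$ (a direct consequence of Lemma \ref{poweroftwo}), this forces $z(m) = \operatorname{lcm}[2^a, z(p_1^{b_1})] = 2^a z(p_1^{b_1})$. Second, I would show that $p_1 \nmid V_n$ for every $n \ge 0$. Indeed, \eqref{eq:minuseen} gives $\alpha^{(p_1+1)/2} \equiv -1 \pmod*{p_1}$, and the oddness of $(p_1+1)/2$ forces the order of $\alpha$ in $(\mathbb{Z}[\sqrt{2}]/p_1)^*$ to be even. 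A hypothetical $p_1 \mid V_n$ would give $\alpha^{2n} \equiv -1 \pmod*{p_1}$, and hence $\alpha^{2n-(p_1+1)/2} \equiv 1 \pmod*{p_1}$; but $2n - (p_1+1)/2$ is odd, contradicting the even order of $\alpha$.

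For the \emph{only if} direction, $U_i \equiv U_j \pmod*{2^a}$ and Lemma \ref{poweroftwo} give $2^a \mid j-i$, so in particular $i \equiv j \pmod*{2}$ and the displayed identity applies. Reducing it modulo $p_1^{b_1}$ and using the coprimality $\gcd(V_{(i+j)/2},p_1)=1$, we obtain $p_1^{b_1} \mid U_{(j-i)/2}$, whence $z(p_1^{b_1}) \mid (j-i)/2$ by Lemma \ref{lem:flauw} iv; combined with $2^a \mid j-i$ and the oddness of $z(p_1^{b_1})$, this delivers $z(m) \mid j-i$. Conversely, if $z(m) \mid j-i$, then $(j-i)/2 = 2^{a-1} z(p_1^{b_1}) t$ for some integer $t$, so $2^{a-1} p_1^{b_1} \mid U_{(j-i)/2}$ (using $z(2^{a-1}) = 2^{a-1}$ from Lemma \ref{poweroftwo} and applying Lemma \ref{lem:flauw} iv at $p_1^{b_1}$). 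Multiplying by $V_{(i+j)/2}$, whose $2$-adic valuation equals $1$ (since $V_n \equiv 2 \pmod*{4}$) and which is coprime to $p_1$, yields $m = 2^a p_1^{b_1} \mid U_j - U_i$.

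The only genuinely non-routine step is the coprimality $p_1 \nmid V_n$, and it is precisely here that the hypotheses $p_1 \equiv 5 \pmod*{8}$ and $z(p_1) = (p_1+1)/2$ are essentially used; everything else is bookkeeping of $2$-adic and $p_1$-adic valuations carried through the product identity.
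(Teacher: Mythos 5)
Your proof is correct, but it takes a noticeably different route from the paper's. The paper works modulo $p_1^{b_1}$ inside $\mathbb{Z}[\sqrt{2}]$: it notes that $\alpha^i$ and $\alpha^j$ both satisfy $x^2-4\sqrt{2}\lambda x-1\equiv 0\pmod{p_1^{b_1}}$, factors the difference as $(\alpha^i-\alpha^j)(\alpha^i+\alpha^j-4\sqrt{2}\lambda)$, and rules out the second factor by a conjugation argument forcing $\alpha^{i+j}\equiv -1\pmod{p_1}$, hence $i+j$ odd, contradicting $i\equiv j\pmod{2}$; the converse is handled via $\alpha^{p_1^{b_1-1}(p_1+1)/2}\equiv -1\pmod{p_1^{b_1}}$ and norm computations. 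You instead split $m$ by the Chinese Remainder Theorem and push everything through the identity $U_j-U_i=U_{(j-i)/2}V_{(i+j)/2}$, the one new ingredient being that $p_1\nmid V_n$ for all $n$. Your order argument for that is sound, and it is really the same parity obstruction the paper uses in its ``case ii'' (indeed $p_1\mid V_{(i+j)/2}$ is exactly $\alpha^{i+j}\equiv-1\pmod{p_1}$), but packaging it as a coprimality lemma streamlines both directions, avoids the quadratic congruence and the norm computations, and in fact uses the hypothesis $z(p_1)=(p_1+1)/2$ only through what $p_1\equiv 5\pmod{8}$ already provides (nor does your converse need $p_1\| U_{z(p_1)}$, which the paper's converse implicitly leans on). One citation nuance: you invoke Lemma \ref{lem:flauw} iv at the prime power $p_1^{b_1}$, and also in the reverse direction ($z(N)\mid n\Rightarrow N\mid U_n$) at $p_1^{b_1}$ and $2^{a-1}$; what you actually need is the biconditional $p_1^{b_1}\mid U_n\iff z(p_1^{b_1})\mid n$, which is not literally part iv but is supplied by Lemma \ref{zpb} (the set of $n$ with $\alpha^{n}\equiv\pm1\pmod{p_1^{b_1}}$ is a subgroup of $\mathbb{Z}$), while the $2$-adic half follows from $\nu_2(U_n)=\nu_2(n)$, a consequence of $U_{2n}=U_nV_n$, $2\|V_n$ and $U_n$ odd for odd $n$ --- facts the paper itself uses freely, so this is a presentational point rather than a gap.
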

\begin{proof}
Since $U_i\equiv U_j\pmod* {2^a}$, it follows that $i\equiv j\pmod* {2^a}$. It remains to understand what happens modulo $p_1^{b_1}$. Since $e_1=-1$, $p_1$ is a prime in ${\mathbb Z}[{\sqrt{2}}]$. Let $\lambda$ denote the common value of
$U_i$ and $U_j$ modulo $p_1^{b_1}$. Then $\alpha^i$ and $\alpha^j$ are both roots of
$$
x^2-4{\sqrt{2}} \lambda x-1=0\pmod*{p_1^{b_1}}
$$
in ${\mathbb Z}[{\sqrt{2}}]/(p_1^{b_1}{\mathbb Z}[{\sqrt{2}}])$. Taking the difference and factoring we get that
\begin{equation}
\label{eq:factors}
(\alpha^i-\alpha^j)(\alpha^i+\alpha^j-4{\sqrt{2}}\lambda)\equiv
0\pmod*{p_1^{b_1}}.
\end{equation}
Now various things can happen. Namely, $p_1^{b_1}$ can divide the first factor or the second factor of \eqref{eq:factors}. If $b_1>1$, some power of $p_1$ may divide the first factor and some power of $p_1$ can divide the second factor.
We investigate each of these options.\medskip
\hfil\break
\noindent i) $p_1^{b_1}\mid (\alpha^i-\alpha^j)$.
\medskip \hfil\break
\noindent Then $\alpha^{i-j}\equiv 
1\pmod*{p_1^{b_1}}$.
Since $i$ and $j$ are of the same parity, it follows that
$\alpha^{(i-j)/2}\equiv \pm 1\pmod*{p_1^{b_1}}$.
By Lemma \ref{zpb} we then infer that
$z(p_1^{b_1})|(i-j)/2$.
By Lemma \ref{z(pb)} we have
$z(p_1^{b_1})|p_1^{b_1-1}(p_1+1)/2$.
Since by assumption $p_1\equiv 5\pmod* 8$ it
follows that $z(p_1^{b_1})$ is odd
and so divides $i-j$.
Since $i-j$ is also divisible by $2^a=z(2^a)$, it is
divisible by ${\text{\rm lcm}}[z(2^a), z(p_1^{b_1})]=z(m)$.\medskip \hfil\break
\noindent ii) $p_1^{b_1}$ does not divide $\alpha^i-\alpha^j$.
\medskip
\hfil\break
\noindent We want to show that this case does not occur. If it does, then $p_1$ divides
\begin{equation}
\label{eq:7}
\alpha^i+\alpha^j-4{\sqrt{2}}\lambda.
\end{equation}
Assume first that $p_1\mid \lambda$. Then $p_1\mid U_i$ and $p_1\mid U_j$ so both $i$ and $j$ are divisible by  the
odd number $z(p_1)=(p_1+1)/2$. Also, $i\equiv j\pmod*{2}$. Since
$i=z(p_1)i_1$ and $j=z(p_1)j_1$, where
$i_1\equiv j_1\pmod*{2}$ and $\alpha^{z(p_1)}\equiv -1\pmod*{p_1}$, it follows that $\alpha^i$ and $\alpha^j$ are both congruent either to $1$ (if $i_1$ and $j_1$ are even) or to $-1$ (if $i_1$ and $j_1$ are odd) modulo $p_1$.
Thus, modulo $p_1$ the expression \eqref{eq:7} is in fact congruent to $\pm 2$ modulo $p_1$, which is certainly not zero. Thus, $\lambda\ne 0$. Then
\begin{equation}
\label{eq:een}
\alpha^i+\alpha^j\equiv 4{\sqrt{2}}\lambda
\pmod*{p_1}.
\end{equation}
The prime $p_1$ is inert so we can conjugate the above relation to get
\begin{equation}
\label{eq:twee}
\alpha^{-i}+\alpha^{-j}\equiv -4{\sqrt{2}}\lambda\pmod*{p_1}.
\end{equation}
Multiplying the second congruence by $\alpha^{i+j}$ and subtracting
\eqref{eq:twee} from \eqref{eq:een}, we get $4{\sqrt{2}} \lambda(\alpha^{i+j}+1)\equiv 0\pmod* {p_1}$. Thus, $\alpha^{i+j}\equiv -1\mod {p_1}$. But the smallest $k$ such that $\alpha^k\equiv -1\pmod* {p_1}$ is $k=z(p_1)=(p_1+1)/2$ which is odd. Hence, $i+j$ is an odd multiple of $z(p_1)$, therefore an odd number itself, which is a contradiction since $i\equiv j\pmod* 2$. Thus, this case does not appear.
This implies that $i\equiv j\pmod* {z(m)}$ if $U_i\equiv U_j\pmod* m$.\medskip
\hfil\break
\indent  Conversely, assume $i>j$ and $i\equiv j\pmod* {z(m)}$. We need to show that $U_i\equiv U_j\pmod* m$. Since $i\equiv j\pmod* {2^a}$, it follows that $i-j$ is even and hence $U_i-U_j=U_{(i-j)/2}V_{(i+j)/2}$. Since $2^{a-1}\mid (i-j)/2$, we get, by iteratively applying the formula $U_{2n}=U_nV_n$, that
$$
U_i-U_j=U_{(i-j)/2^{a}} V_{(i-j)/2^{a}} V_{(i-j)/2^{a-1}}\cdots V_{(i-j)/4} V_{(i+j)/2}.
$$
In the right--hand side we have $a$ factors from the $V$ sequence and each of
them is a multiple of $2$. Hence, $2^a\mid (U_i-U_j)$.  As for the divisibility by $p_1^{b_1}$, note that since $z(p_1^{b_1})\mid (i-j)$ and $i-j$ is even, it follows that
$$i-j=p_1^{b_1-1}(p_1+1)\ell,$$ for some positive integer $\ell$. Since $\alpha^{p_1^{b_1-1}(p_1+1)/2}\equiv -1\pmod* {p_1^{b_1}}$, it follows that $\alpha^{i-j}\equiv 1\pmod* {p_1^{b_1}}$. The same holds if we replace $\alpha$ by $\alpha^{-1}$. Thus,
$$
\alpha^i\equiv \alpha^j \alpha^{i-j}\equiv
\alpha^{j}\pmod*{p_1^{b_1}},
$$
and the same congruence holds if $\alpha$
is replaced by $\alpha^{-1}$. Subtracting
these two congruences we get $p_1^{b_1}\mid ((\alpha^i-\alpha^{-i})-(\alpha^j-\alpha^{-j}))$. Computing norms in ${\mathbb K}$ and using the fact that $p_1$ is inert, we get
$p_1^{2b_1}\mid N_{\mathbb K}((\alpha^i-\alpha^{-i})-(\alpha^j-\alpha^{-j}))$
and so $$p_1^{2b_1}\mid 32(U_i-U_j)^2.$$
Thus, $U_i\equiv U_j\pmod* {p_1^{b_1}}$. Hence, $U_i\equiv U_j\pmod* m$.
\end{proof}
\section{The end of the proof or why $5$ and not $37$?}
We need a few more results before we are prepared well enough to
establish Theorem \ref{main}.
\begin{lemma}
\label{BP}
For $n\ge 2^{24}\cdot 5^3$ the interval
$[5n/3,37n/19)$ contains a number of
the form
$2^a\cdot 5^{b}$ with $a\ge 1$ and $b\ge 0$.
\end{lemma}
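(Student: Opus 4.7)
\medskip

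\noindent\textbf{Proof plan.} The plan is to reformulate the problem on the circle $\mathbb{R}/\mathbb{Z}$ and apply the three-distance theorem to the fourteen points $\{b\alpha\}$, $b = 0, 1, \ldots, 13$, where $\alpha := \log_2 5$. Set $L := \log_2(5n/3)$ and $R := \log_2(37n/19)$, so that $R - L = \log_2(111/95)$. Finding $2^a 5^b \in [5n/3, 37n/19)$ with $a \ge 1$ and $b \ge 0$ is equivalent to finding integers $a \ge 1$, $b \ge 0$ with $a + b\alpha \in [L, R)$; for fixed $b$, such an $a$ exists exactly when the fractional part $\{b\alpha\}$ lies in the half-open arc $[\{L\}, \{L\} + (R - L)) \bmod 1$ of length $R - L$.

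First I would verify the arithmetic inequality
\[
\log_2(111/95) > 10\alpha - 23 = \{10\alpha\},
\]
equivalent to $111 \cdot 2^{23} > 95 \cdot 5^{10}$, i.e.\ $931\,135\,488 > 927\,734\,375$. Next, since the convergents of $\log_2 5$ begin $2/1, 7/3, 65/28, \ldots$ and $3 \le 13 < 28$, the three-distance theorem yields that the fourteen points above partition $\mathbb{R}/\mathbb{Z}$ into arcs of three distinct lengths, namely $\|3\alpha\| = 7 - 3\alpha$, $\|13\alpha\| = 13\alpha - 30$, and their sum $\{10\alpha\}$. Combining the two, any arc of length $R - L$ on the circle is strictly longer than every gap and therefore must contain at least one $\{b\alpha\}$ with $b \in \{0, 1, \ldots, 13\}$.

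Applying this to the arc $[\{L\}, \{L\} + (R - L)) \bmod 1$ produces the desired $b$; taking $a := \lceil L - b\alpha \rceil$ then gives $a + b\alpha \in [L, R)$, so $2^a 5^b \in [5n/3, 37n/19)$. It remains to check $a \ge 1$: since $b \le 13$, this reduces to $L > 13\alpha$, i.e.\ $n > 3 \cdot 5^{12}$. The hypothesis $n \ge 2^{24} \cdot 5^3$ delivers this comfortably, as $2^{24} \cdot 5^3 > 3 \cdot 5^{12}$ simplifies to $2^{24} > 3 \cdot 5^9$, and indeed $16\,777\,216 > 5\,859\,375$.

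The main subtlety is the sharpness of the arithmetic inequality: the gap $\log_2(111/95) - \{10\alpha\}$ is only about $5 \times 10^{-3}$, so any coarser estimate than the exact convergent analysis will fail. This tightness pins down $\{0, 1, \ldots, 12\}$ as the smallest admissible range for $b$ (strictly smaller ranges leave gaps exceeding $R - L$), and the choice $N = 13$ then ensures $a \ge 1$ under the hypothesis, linking the lemma's threshold to the convergent $7/3$ of $\log_2 5$.
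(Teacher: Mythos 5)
Your proof is correct, but it takes a genuinely different route from the paper. The paper argues multiplicatively: starting from $m_1=2^{24}\cdot 5^3$ it greedily builds an increasing chain of numbers $2^{a+1}5^b$ by the substitutions $5^3\rightarrow 2^7$ and $2^{23}\rightarrow 5^{10}$, using that both ratios $2^7/5^3$ and $5^{10}/2^{23}$ lie in $(1,111/95)$, so consecutive terms of the chain never jump over an interval whose endpoints have ratio $111/95=\frac{37/19}{5/3}$; this also explains the threshold $2^{24}\cdot 5^3$ as the seed of the chain. You instead pass to the circle, and show that the maximal gap among the points $\{b\alpha\}$, $0\le b\le 13$, $\alpha=\log_2 5$, equals $\{10\alpha\}=10\alpha-23$ (via the three-distance theorem and the convergents $7/3$, $65/28$), which is strictly smaller than the arc length $\log_2(111/95)$ by the integer inequality $111\cdot 2^{23}>95\cdot 5^{10}$. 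Note that the two arguments rest on the same Diophantine facts — your gap lengths $7-3\alpha$ and $10\alpha-23$ are exactly the paper's two substitution ratios in logarithmic form — but the packaging differs: the paper's version is entirely elementary and self-contained, while yours invokes the three-distance theorem and in return gives more structural information: an explicit bound $b\le 13$ on the exponent of $5$, and validity already for all $n>3\cdot 5^{12}$, which is a weaker hypothesis than the stated $n\ge 2^{24}\cdot 5^3$. Your boundary handling (half-open arc, $a=\lceil L-b\alpha\rceil\ge 1$ from $L>13\alpha$) is also sound, so the argument is complete as written.
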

\begin{proof}
It is enough to show that there exists an
strictly increasing sequence of integers
$\{m_i\}_{i=1}^{\infty}$ of the form
$m_i=2^{a_i+1}\cdot 5^{b_i}$ with
$a_1=23$ and $b_1=3$, $a_i,b_i\ge 0$, having
the property
that
$$1<\frac{m_{i+1}}{m_i}<\frac{111}{95}.$$
Since both
$2^7/5^3$ and $5^{10}/2^{23}$ 
are in $(1,111/95)$, the idea is to use
the substitutions $5^3\rightarrow 2^7$ and $2^{23}\rightarrow 5^{10}$ to
produce a strictly increasing sequence starting from $m_1$. Note that we
can at each stage make one of these substitutions as otherwise
we have reached a number dividing $2\cdot 2^{22}\cdot 5^{2}<m_1$,
a contradiction.
\end{proof}
\begin{corollary}
\label{geenwaarde}
Suppose that $m=2^a\cdot p^{b}$, $p>5$, $a,b\ge 1$.
If $m\ge \frac{37}{19}\cdot 2^{24}\cdot 5^{3}$, then
$m$ is not a discriminator value.
\end{corollary}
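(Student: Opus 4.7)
The strategy is to assume, for a contradiction, that $m=2^{a}p^{b}=\mathcal{D}_{1}(n)$ for some $n$, and then produce a strictly smaller integer $m'$ that already discriminates $U_{0},\ldots,U_{n-1}$. The first task is to pin down the feasible range of $n$. By Lemma~\ref{restrict} one must have $p\equiv 5\pmod*{8}$ and $z(p)=(p+1)/2$; the lemma preceding Lemma~\ref{lem:last} forces $p\|U_{z(p)}$ when $b\ge 2$, so Lemma~\ref{lem:flauw} iii gives $z(p^{b})=p^{b-1}(p+1)/2$, and since $(p+1)/2$ is odd we conclude $z(m)=m(p+1)/(2p)$. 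Lemma~\ref{lem:last} then tells us that $U_{0},\ldots,U_{n-1}$ are pairwise distinct modulo $m$ if and only if $n\le z(m)$, so necessarily $n\le m(p+1)/(2p)$. On the other hand, the Corollary of Lemma~\ref{poweroftwo} provides the discriminator upper bound $v_{n}$ (the smallest power of two $\ge n$), and since $m$ is not a power of two one has $v_{n}>m$; combined with $v_{n}<2n$ this gives $n>m/2$.

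With $m/2<n\le m(p+1)/(2p)$ in hand, the second step is to apply Lemma~\ref{BP}. The hypothesis $m\ge \frac{37}{19}\cdot 2^{24}\cdot 5^{3}$ combined with $n>m/2$ forces $5n/3>2^{24}\cdot 5^{3}$, so the substitution construction of Lemma~\ref{BP} produces $m'=2^{a'}\cdot 5^{b'}$ (with $a'\ge 1$, $b'\ge 0$) satisfying $5n/3\le m'<37n/19$. Such an $m'$ discriminates $U_{0},\ldots,U_{n-1}$: either $b'=0$, in which case $m'=2^{a'}\ge 5n/3>n$ works by the Corollary of Lemma~\ref{poweroftwo}; or $b'\ge 1$, in which case $z(m')=3m'/5\ge n$ and the $p=5$ instance of Lemma~\ref{lem:last} applies (noting $5\equiv 5\pmod*{8}$ and $z(5)=3=(5+1)/2$).

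It remains to verify $m'<m$, and this is where the argument bifurcates on $p$. Combining $m'<37n/19$ with $n\le m(p+1)/(2p)$ gives
$$m'<\frac{37(p+1)}{38p}\,m,$$
whose right-hand side does not exceed $m$ precisely when $p\ge 37$. This settles all admissible primes $p\ge 37$. For the remaining values, the constraints $p\equiv 5\pmod*{8}$ and $z(p)=(p+1)/2$ leave only $p=13$ in the window $5<p<37$ ($p=29$ is ruled out since a direct check yields $z(29)=5\ne 15$). The main obstacle I anticipate is thus the case $p=13$: the residual $n$-window $(19m/37,\,7m/13]$ has relative width only $12/481$, which is below the ratio $111/95$ supplied by Lemma~\ref{BP}, so closing it demands sharpening the substitution scheme $\{5^{3}\leftrightarrow 2^{7},\ 2^{23}\leftrightarrow 5^{10}\}$ with additional near-unity exchanges to guarantee a $2^{a'}\cdot 5^{b'}$ in every such narrow interval beyond the threshold.
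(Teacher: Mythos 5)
Your overall strategy coincides with the paper's own proof of Corollary~\ref{geenwaarde}: from ${\mathcal D}_1(n)=m$ one gets $n\le z(m)$, one bounds $z(m)$ by roughly $19m/37$ so that $m\ge 37n/19$, and then Lemma~\ref{BP} supplies a smaller integer $2^{a'}5^{b'}$ in $[5n/3,37n/19)$ which, by Lemma~\ref{poweroftwo} resp.\ Lemma~\ref{lem:last}, already discriminates the first $n$ terms --- contradicting minimality. In fact you are more scrupulous than the paper's two-line argument: you note explicitly that the inequality $m'<m$ only closes when $37(p+1)/(38p)\le 1$, i.e.\ $p\ge 37$, whereas the paper asserts $z(m)\le 19m/37$ without comment (and only ever invokes the corollary with $p=37$), and you correctly use Lemma~\ref{restrict} and $z(29)=5$ to reduce the leftover range $5<p<37$ to the single prime $p=13$.

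The genuine gap is that you leave $p=13$ open and propose to repair it by refining the $2$--$5$ substitution scheme of Lemma~\ref{BP}; no such refinement is needed, because for $p=13$ one should not look for a number of the form $2^{a'}5^{b'}$ with $b'\ge 1$ at all. First, $b\ge 2$ is impossible by the very lemma you cite (the one preceding Lemma~\ref{lem:last}): it requires $13\,\|\,U_{z(13)}$, but $z(13)=7$ and $13^2\mid U_7$ (this is the ``Wieferich'' fact $(3+2\sqrt{2})^7\equiv -1$ modulo $13^2$ recorded in the proof of Lemma~\ref{2a5b}), so no value $2^a\cdot 13^b$ with $b\ge 2$ can occur. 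Second, for $b=1$ we have $m=2^a\cdot 13$ and $z(m)=7\cdot 2^a$, hence $n\le 7\cdot 2^a<2^{a+3}<13\cdot 2^a=m$: the bare power of two $2^{a+3}$ discriminates $U_0,\ldots,U_{n-1}$ by Lemma~\ref{poweroftwo} and is smaller than $m$, so $m$ is not a value. (This is precisely how the paper disposes of $13$, albeit inside the proof of Lemma~\ref{2a5b} rather than in the corollary itself.) With this observation your argument is complete; the narrow-window obstruction of relative width $39/35<111/95$ that worried you simply never arises, because for $p=13$ the interval $[z(m),m)$ has ratio $13/7>2\cdot 7/8$ wide enough to contain a power of two times $2^a$, namely $8\cdot 2^a$.
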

\begin{proof}
Suppose that $\mathcal{D}_1(n)=m$, then we must have
$$z(m)=2^a\cdot p^{b-1}(p+1)/(2k)\ge 19m/37\ge n,$$ that
is $m\ge 37n/19$. By Lemma \ref{BP} in the
interval $[5n/3,37n/19)$ there is an integer
of the form $m=2^c\cdot 5^{d}$ with $c\ge 1$. This integer
discriminates the first $n$ terms of the sequence and is smaller
than $m$. This contradicts the definition of the discriminator.
\end{proof}
Thus we see that in some sense there is an
abundance of the numbers of the form
$m=2^{a}\cdot 5^b$ that are in addition fairly
regularly distributed. Since they discriminate
the first $n$ terms provided that $m\ge 5n/3$, rather
than the weaker $m\ge 2np/(p+1)$ for $p>5$, they
remain as values, whereas numbers of the form
$m=2^{a}\cdot p^b$ with $p>5$ do not.
\begin{lemma}
\label{2a5b}
If $n>1,$ then ${\mathcal D}_1(n)=2^a\cdot 5^b$ for
some $a\ge 1$ and
$b\ge 0$.
\end{lemma}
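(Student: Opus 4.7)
The plan is to combine the structural restrictions on $\mathcal{D}_1$ already established with Corollary \ref{geenwaarde} and a finite computational check. By Lemma \ref{iseven} we know $\mathcal{D}_1(n)$ is even, and by Lemma \ref{restrict} we have $\mathcal{D}_1(n)=2^a$ (in which case $\mathcal{D}_1(n)=2^a\cdot 5^0$, and the conclusion holds with $b=0$) or $\mathcal{D}_1(n)=2^ap^b$ with $a,b\ge 1$ and $p\equiv 5\pmod*{8}$ satisfying $z(p)=(p+1)/2$. Only the second case requires work: we must rule out all primes $p\ge 13$ as odd prime divisors of $\mathcal{D}_1(n)$.

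Suppose therefore, toward a contradiction, that $m=\mathcal{D}_1(n)=2^ap^b$ with $p\equiv 5\pmod*{8}$ and $p>5$. By Corollary \ref{geenwaarde}, this immediately forces $m<\frac{37}{19}\cdot 2^{24}\cdot 5^3=:M_0$, since any larger such $m$ is automatically beaten by some $2^c\cdot 5^d$ produced by Lemma \ref{BP}. Thus the assumption $p>5$ can occur only for the finitely many candidate values $m=2^ap^b<M_0$ with $p\equiv 5\pmod*{8}$ and $p>5$.

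For each such candidate the analysis is mechanical. Because $p\equiv 5\pmod*{8}$, we have $z(p^b)=p^{b-1}(p+1)/2$ by Lemma \ref{lem:flauw} together with the restriction $z(p)=(p+1)/2$, so $z(m)=2^a\cdot p^{b-1}(p+1)/2=m(p+1)/(2p)$. By Lemma \ref{lem:last}, $m$ discriminates $U_0,\ldots,U_{n-1}$ exactly when $z(m)\ge n$, so the only $n$ for which $\mathcal{D}_1(n)$ could equal $m$ satisfy $n\le z(m)$. For each such pair $(m,n)$ one simply exhibits a strictly smaller integer $m'$ of the form $2^c$ or $2^c\cdot 5^d$ with $z(m')\ge n$, which (again by Lemma \ref{lem:last} and Lemma \ref{poweroftwo}) discriminates the first $n$ terms and so contradicts the minimality of $m$. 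Since the set of candidates $m<M_0$ is finite and $z(m)$, $z(m')$ are given by closed formulas, this is a finite verification.

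The main obstacle is purely the size of this bookkeeping: the cutoff $M_0$ is on the order of $10^9$, so a direct enumeration over all $n<M_0$ would be impractical, but grouping candidates by $(p,b)$ and noting that each fixed $m$ constrains $n$ to the single interval $(z(m)/r, z(m)]$ for a suitable small $r>1$ reduces the search dramatically, and within each such window it is routine to locate a dominating $2^c$ or $2^c\cdot 5^d$. Carrying out this finite check completes the exclusion of $p>5$, and hence $\mathcal{D}_1(n)=2^a\cdot 5^b$ with $a\ge 1$ and $b\ge 0$.
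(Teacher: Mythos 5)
Your reduction via Lemmas \ref{iseven} and \ref{restrict} follows the paper, but the core of your argument --- that every remaining candidate $m=2^ap^b$ with $p>5$ is beaten by a smaller $2^c$ or $2^c\cdot 5^d$ --- is asserted rather than proved. Saying that one ``simply exhibits'' a dominating modulus and that the verification is ``routine'' begs the question: the existence of such a dominating modulus for every candidate is precisely the nontrivial content of the lemma, and nothing you quote guarantees it. A power of two in $[z(m),m)$ need not exist, since $m/z(m)=2p/(p+1)<2$; when it fails you need some $2^c\cdot 5^d$ in $[\tfrac{5}{3}z(m),m)$, an interval of ratio $6p/(5(p+1))$, which for $p\le 37$ does not exceed the threshold $111/95$ used in Lemma \ref{BP}, so there is no a priori guarantee even for large candidates, and for small ones no guarantee at all. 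The paper settles exactly this point, and not by enumeration: for every $p>50$ it gives a uniform argument (if no power of two lies in $[z(p^{b}),p^{b})$, then $p^{b}$ sits in the last two percent of the interval ending at $2^{A+8}$, whence $2^{A+1}\cdot 5^3<p^{b}$ and $z(p^{b})<2^{A+1}\cdot 3\cdot 5^2$, so $2^{a+A+1}\cdot 5^3$ discriminates at least as many terms as $m$), valid with no bound on $m$; and it disposes of the three remaining primes $13,29,37$ by specific arithmetic inputs: $29$ is excluded because $z(29)=5\ne 15$, $13$ because it is a Wieferich prime for $\alpha$ (killing $b\ge 2$) together with $8\in[7,13]$ (killing $b=1$), and $37$ by explicitly checking that $[2\cdot 37^{i-1}\cdot 19,\,2\cdot 37^{i})$ contains a power of two for $i\le 5$ and invoking Corollary \ref{geenwaarde} for $i\ge 6$. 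None of this content appears in your outline, so as written the proof is incomplete.

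A secondary concern is your blanket use of Corollary \ref{geenwaarde} for all $p>5$ to truncate to $m<M_0$. The corollary is stated in that generality, but its proof only yields $n\le z(m)\le\frac{p+1}{2p}m\le\frac{19}{37}m$ when $p\ge 37$; for $p=13$ one has $z(m)=\frac{7}{13}m>\frac{19}{37}m$, so the truncation you lean on is not actually supported by the argument given there for the small primes --- which is presumably why the paper invokes that corollary only for $37$ and treats $13$ and $29$ by the special facts above. Finally, even granting the truncation, the candidate set below $M_0\approx 4\cdot 10^9$ contains millions of primes $p\equiv 5\pmod 8$ with $b=1$; your grouping by $(p,b)$ does rescue this (for $b=1$ the interval $[(p+1)/2,p)$ always contains a power of two), but you neither state this reduction nor carry out, or structurally replace, the verification for $b\ge 2$, where the genuine difficulties (Wieferich-type behaviour, near misses of powers of two) live.
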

\begin{proof}
By Lemma \ref{iseven} we have $a\ge 1$. If $m={\mathcal D}_1(n)\ne 2^a$
for some $a\ge 1$, then by Lemma \ref{restrict} iii it is
of the form $m=2^a\cdot p_1^{b_1}$ for some $p_1\equiv 5\pmod*{8}$.
Let assume for
the sake of contradiction that we have discriminators of the form $m=2^ap_1^{b_1}$ for some odd $p_1> 5$. Then $z(p_1^{b_1})=p_1^{b_1-1}(p_1+1)/2$. Let $A$ be minimal with $p_1^{b_1}<2^{A+8}$. Then $A\ge 2$ by our calculation
because we did not find any such $p_1$ on calculating ${\mathcal D}_1(n)$ for $n\le 2^{10}$ (cf. Section \ref{intro}).  Then
$$
2^{A+7}<p_1^{b_1-1}\frac{(p_1+1)}{2}.
$$
Consider the numbers
$$
2^{A+7},\quad  2^{A+1}\cdot 3\cdot 5^2,\quad 2^{A+1}\cdot 5^3,\quad 2^{A+8}.
$$
Assuming that $p_1>50$, $2^{A+8}>p_1^{b_1}$ and that $p_1^{b_1}+p_1^{b_1-1}>2^{A+8}$,
we obtain
$$
0<2^{A+8}-p_1^{b_1}<p_1^{b_1-1}<\frac{2^{A+8}}{50}.
$$
Hence, $p_1^{b_1}$ sits in the last $2\%$ of the interval ending at $2^{A+8}$. Since
$$
2^{A+8}-2^{A+1} 5^3=2^{A+1} 3=2^{A+8} (3/128)>2^{A+8}/50,
$$
it follows that $p_1^{b_1}>2^{A+1} 5^3$.  We now claim that $p_1^{b_1-1}(p_1+1)/2<2^{A+1} \cdot 3\cdot 5^2$. Indeed, for that it suffices that
$$
2^{A+8} \left(\frac{p_1+1}{2p_1}\right)<2^{A+1}\cdot 3\cdot 5^2,
$$
so $2p_1/(p_1+1)>128/75$, which is equivalent to $22p_1>128$, which is true for $p_1>50$.

Since $2^a\cdot p_1^{b_1}$ discriminates the first
$2^a\cdot p_1^{b_1-1}(p_1+1)/2$ terms of the sequence (but not more)
and the same integers are discriminated by the smaller number
$2^{a+A+1}\cdot 5^3$, the number $2^a\cdot p_1^{b_1}$ is not a
discriminator value.

So, it remains to check primes $p_1<50$. Since $p_1\equiv 5\pmod* 8$, we just need to check $p_1\in \{13,29,37\}$. Fortunately, $13$ is a Wiefrich prime for $\alpha$ in that
$$
(3+2{\sqrt{2}})^7\equiv -1\pmod*{13^2},
$$
so we cannot use powers $13^{b_1}$ with $b_1>1$, while for $b_1=1$ the interval $[z(p_1),p_1]=[7,13]$ contains $8$ which is a power of $2$. For $29$, we have that $z(29)=5$ (instead of $(29+1)/2$), so
$29$ is not good either.

It remains to deal with $p=37$. We will show
that for $k_i=2\cdot 37^i$ and $1\le i\le 5$,
there is a power of the form $2^{e_i}<k_i$ that
discriminates the same terms of the sequence
as $k_i$ does, thus showing that $k_i$ cannot
be a discriminator. By the same token,
any potential value $2^{\alpha}\cdot 37^i$, $1\le i\le 6$,
is outdone by $2^{\alpha+e_i}$. Any remaining
value of the form $2^{\alpha}\cdot 37^i$ has
$i\ge 6$ and $\alpha\ge 1$ and cannot be a value by
Corollary \ref{geenwaarde}.
\par The numbers $2^{e_i}$ we are looking for
must satisfy $$2\cdot 37^{i-1}\cdot 19\le
2^{e_i}< 2\cdot 37^i\text{~for~}1\le i\le 5.$$
(Recall that the number $2\cdot 37^i$
discriminates the first $2\cdot 37^{i-1}\cdot 19$
terms of the sequence and not more terms.)
Note that thee numbers $e_i$ are unique if
they exist. Some
simple computer algebra computations yield
$e_1=6,e_2=11,e_3=16,e_4=21$ and $e_5=27$.
\end{proof}
\begin{lemma}
\label{twocases}
We say that $m$ discriminates $U_0,\ldots,U_{n-1}$
if these integers are
pairwise distinct modulo $m$.
\begin{itemize}
\item[{\rm i)}] The integer $m=2^a$ discriminates
$U_0,\ldots,U_{n-1}$ if and only if $m\ge n$.
\item[{\rm ii)}] The integer $m=2^a\cdot 5^b$
with $a,b\ge 1$ discriminates
$U_0,\ldots,U_{n-1}$ if and only if
$m\ge 5n/3$.
\end{itemize}
\end{lemma}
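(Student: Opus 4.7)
The plan is to reduce both parts of Lemma \ref{twocases} to index-of-appearance computations already in hand.

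For part (i), Lemma \ref{poweroftwo} gives that $U_i\equiv U_j\pmod*{2^a}$ forces $i\equiv j\pmod*{2^a}$. Thus, if $2^a\ge n$ and $0\le i<j\le n-1$, then $0<j-i<2^a$, so $i\not\equiv j\pmod*{2^a}$ and hence $U_i\not\equiv U_j\pmod*{2^a}$; i.e.\ $m=2^a$ discriminates. For the reverse direction, when $2^a<n$ I need to exhibit a collision, which I obtain from the index of appearance $z(2^a)=2^a$. The doubling identity $U_{2n}=U_nV_n$, together with the fact extracted from the proof of Lemma \ref{poweroftwo} that $2\|V_n$ for every $n\ge 0$, yields $\nu_2(U_{2n})=\nu_2(U_n)+1$; induction from $\nu_2(U_1)=0$ then gives $\nu_2(U_{2^a})=a$, so $U_{2^a}\equiv U_0\pmod*{2^a}$ is the required collision within $\{U_0,\ldots,U_{n-1}\}$.

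For part (ii), the integer $p_1=5$ satisfies $5\equiv 5\pmod*{8}$ (so $(\tfrac{2}{5})=-1$) and $z(5)=3=(5+1)/2$, as can be read off from $U_3=35$. The hypotheses of Lemma \ref{lem:last} are therefore met, giving
$$U_i\equiv U_j\pmod*{m}\quad\Longleftrightarrow\quad i\equiv j\pmod*{z(m)}.$$
To compute $z(m)$, I apply Lemma \ref{lem:flauw}\,v to get $z(m)=\mathrm{lcm}[z(2^a),z(5^b)]$. From part (i) we have $z(2^a)=2^a$; for $z(5^b)$, Lemma \ref{lem:flauw}\,iii gives $z(5^b)=5^{\max\{b-c,0\}}z(5)=3\cdot 5^{b-1}$, using $c=\nu_5(U_3)=\nu_5(35)=1$. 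Since $\gcd(2^a,3\cdot 5^{b-1})=1$, we obtain $z(m)=2^a\cdot 3\cdot 5^{b-1}=3m/5$. The residues $0,1,\ldots,n-1$ are pairwise distinct modulo $3m/5$ exactly when $n\le 3m/5$, i.e.\ $m\ge 5n/3$, which is the claim.

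I do not anticipate a real obstacle: both directions of each part are essentially bookkeeping once the two small index-of-appearance computations $z(2^a)=2^a$ and $z(5^b)=3\cdot 5^{b-1}$ are recorded. The only subtlety is that Lemma \ref{poweroftwo} is a one-way implication, so the converse direction in part (i) must be supplied by directly exhibiting the collision $U_{2^a}\equiv U_0\pmod*{2^a}$, which is why the valuation identity $\nu_2(U_{2^a})=a$ must be proved in passing.
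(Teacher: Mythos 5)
Your proof is correct and follows essentially the same route as the paper: part (i) from Lemma \ref{poweroftwo}, and part (ii) by invoking Lemma \ref{lem:last} and computing $z(2^a5^b)=2^a\cdot 3\cdot 5^{b-1}=3m/5$ via Lemma \ref{lem:flauw}, which the paper leaves as ``easily seen''. The only superfluous step is your collision $U_{2^a}\equiv U_0\pmod*{2^a}$ for the converse of part (i): since $n$ pairwise incongruent integers modulo $m$ force $m\ge n$, that direction is immediate by pigeonhole (though your valuation argument, which in effect gives $z(2^a)=2^a$, is also correct and is what you reuse in part (ii)).
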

\begin{proof}
Case i follows from Lemma \ref{poweroftwo}.
Now suppose that $a,b\ge 1$.
By Lemma \ref{lem:last} the integer $m$ discriminates
$U_0,\ldots,U_{z(m)-1}$, but not
$U_0,\ldots,U_{z(m)}$. It follows that
$m$ discriminates $U_0,\ldots,U_{n-1}$ iff $n\le z(m)$.
As it is easily seen that $z(m)=3m/5$, the result follows.
\end{proof}
At long last we are ready to prove Theorem \ref{main}.
\begin{proof}[Proof of Theorem \ref{main}]
As the statement is correct for $n=1$, we may assume
that $n>1$. By Lemma \ref{2a5b} it then follows that
either $m=2^a$ for some $a\ge 1$ or
$m=2^a\cdot 5^b$ with $a,b\ge 1$.
On invoking Lemma \ref{twocases} we infer
that the first assertion holds true.
\par It remains to determine the image of
the discriminator ${\mathcal D}_1$.
Let us suppose that $m=2^a\cdot 5^b$ with $a,b\ge 1$
occurs as value. Let $\alpha$ be the unique integer
such that $2^{\alpha}<2^a\cdot 5^b<2^{\alpha+1}$.
By Lemma \ref{twocases} it now follows that we must have
$z(m)>2^{\alpha}$, that is
$2^a\cdot 5^{b-1}\cdot3>2^{\alpha}$. It follows that $m$ occurs
as value iff
\begin{equation}
\label{ongelijk}
\frac{5}{3}\cdot 2^{\alpha}<2^a\cdot 5^b<2^{\alpha+1}.
\end{equation}
(Indeed, under these
conditions we have ${\mathcal D}_1(n)=2^a\cdot 5^b$ for
$n\in [2^a+1,2^a\cdot 5^{b-1}\cdot3]$.) Inequality \eqref{ongelijk}
can be rewritten as $5/6<2^{a-\alpha-1}<1$ and, after
taking logarithms, is seen to have
a solution iff $b\in {\mathcal M}$.
If it has a solution, then we must have
$\alpha-a=\lfloor b\log 5/\log 2\rfloor$. In
particular for each $a\ge 1$ and $b\in {\mathcal M}$,
the number $2^a\cdot 5^b$ occurs as value.
\end{proof}

\section{General $k$}
\subsection{Introduction}
\label{generalkintro}
What is happening for $k>1$? It turns out that the situation is quite different. 
\par For $k=2$ we have the following result.
\begin{theorem}
\label{tweee}
Let $e\ge 0$ be the smallest integer such that
$2^e\ge n$ and $f\ge 1$ the smallest  integer such that
$3\cdot 2^f\ge n$. Then
${\mathcal D}_2(n)=\min\{2^e,3\cdot 2^f\}$.
\end{theorem}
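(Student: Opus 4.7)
The plan is to establish matching upper and lower bounds on $\mathcal{D}_2(n)$: the upper bound splits into two independent discriminations (by $2^e$ and by $3 \cdot 2^f$), while the lower bound reduces to a uniform estimate on $z(m)$ for non-``good'' moduli. For $\mathcal{D}_2(n) \le 2^e$, Lemma~\ref{poweroftwo} directly gives $U_i \equiv U_j \pmod{2^e} \Rightarrow i \equiv j \pmod{2^e}$, so the first $2^e \ge n$ terms are pairwise distinct. For $\mathcal{D}_2(n) \le 3 \cdot 2^f$, I would show $U_0,\ldots,U_{3\cdot 2^f - 1}$ are pairwise distinct modulo $3 \cdot 2^f$. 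Assume $U_i \equiv U_j \pmod{3\cdot 2^f}$ with $0 \le i < j < 3\cdot 2^f$; reducing mod $2^f$, Lemma~\ref{poweroftwo} yields $i \equiv j \pmod{2^f}$, hence $i \equiv j \pmod 2$ (as $f \ge 1$). The factorization $U_j - U_i = U_{(j-i)/2}\,V_{(j+i)/2}$ then applies, where $V_n := V_n(2)$ has $V_0 = 2$, $V_1 = 10$, and the same recurrence as $U_n$. Modulo $3$ this becomes $V_{n+2} \equiv V_{n+1} - V_n$, so $V_n \bmod 3$ cycles $2, 1, 2, 1, \ldots$, never divisible by $3$. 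Hence $3 \mid U_j - U_i$ forces $3 \mid U_{(j-i)/2}$, and since $z(3) = 3$ this yields $3 \mid (j-i)/2$, so $3 \mid j-i$. Combined with $2^f \mid j-i$ we get $3 \cdot 2^f \mid j-i$, contradicting $0 < j-i < 3 \cdot 2^f$.

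For the matching lower bound set $m^\star := \min\{2^e, 3 \cdot 2^f\}$. The ordered good moduli $1, 2, 4, 6, 8, 12, 16, 24, \ldots$ have consecutive ratios $\le 3/2$ past $m = 4$, and a small-case check for $n \le 4$ confirms $m^\star < 5n/3$ strictly for every $n \ge 1$. Any candidate smaller discriminator $m \in [n, m^\star - 1]$ therefore satisfies $3m/5 < n$. It thus suffices to establish the \emph{Key Estimate}: if $m$ is not of the form $2^a$ or $3 \cdot 2^a$ with $a \ge 1$, then $z(m) \le 3m/5$. Granting this, $z(m) < n$ yields the collision $U_0 \equiv U_{z(m)} \pmod m$ within the first $n$ indices, contradicting discrimination. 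The only edge case is $m = 3$ (where $z(3) = 3 > 9/5$), and it arises only when $n = 3$, $m^\star = 4$; here one checks directly that $U_1 \equiv 1 \equiv U_2 \pmod 3$, ruling out $m = 3$.

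The Key Estimate splits into two cases. If $m = 2^a \cdot 3^b$ with $b \ge 2$, then since $k = 2 \equiv 2 \pmod 9$, Lemma~\ref{speciaal} yields $z(3^b) = 3^{b-1}$, whence $z(m) = \mathrm{lcm}(2^a, 3^{b-1}) = m/3$. If $m$ has an odd prime factor $p \ge 5$, I would mirror Lemma~\ref{z(pb)} in the biquadratic setting $\Z[\sqrt 2, \sqrt 3]$: writing $\alpha(2) = \beta(2)^2$ with $\beta(2) = \sqrt 2 + \sqrt 3$, the congruence $\beta^p \equiv \bigl(\tfrac{2}{p}\bigr)\sqrt 2 + \bigl(\tfrac{3}{p}\bigr)\sqrt 3 \pmod p$ reduces in each of the four sign combinations of $(\tfrac{2}{p}, \tfrac{3}{p})$ to $\alpha^{(p\pm 1)/2} \equiv \pm 1 \pmod p$, and hence $z(p^b) \mid p^{b-1}(p\pm 1)/2$. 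A direct computation gives $z(5) = 2$, so $z(5^b)/5^b = 2/5$; and for $p \ge 7$ one has $z(p^b)/p^b \le (p+1)/(2p) \le 4/7$. Taking lcms across all prime-power factors (the $2$- and $3$-parts contributing ratio $\le 1$) preserves the bound $z(m)/m \le 4/7 < 3/5$. The principal obstacle is this biquadratic Lehmer-sequence argument---now requiring four Legendre-symbol sign cases instead of two---but each sub-case reduces to a short computation directly analogous to Lemma~\ref{z(pb)}.
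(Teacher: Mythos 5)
Your proof is correct and takes essentially the same route as the paper's own short proof: the moduli with $z(m)=m$ are exactly the divisors of $3\cdot 2^a$, every other modulus satisfies $z(m)\le 3m/5$, and the interval $[n,5n/3)$ always contains a number of the form $2^e$ or $3\cdot 2^f$. The only difference is that you supply details the paper leaves implicit -- the mod-$3$ argument with the $V$-sequence for the upper bound, the Legendre-symbol case analysis (which simply specializes the paper's general-$k$ treatment of primes $p\nmid k(k+1)$), and the explicit exclusion of $m=3$ when $n=3$.
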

\begin{proof}
We have that if
$z(m)=m$, then $m|3\cdot 2^a$ for some $a\ge 0$.
For the other integers $m$ we have $z(m)\le 3m/5$ (actually
even $z(m)\le 7m/13$).
It follows that if $m$ discriminates the first $n$ values
of the sequence $U(2)$, then we must have
$m\ge 5n/3$.
It is easy to check that for every $n\ge 2$
there is a power of two or a number of the form
$3\cdot 2^a$ in the interval $[n,5n/3)$.
As ${\mathcal D}_2(1)=1$ we are done.
\end{proof}

For the
convenience of the reader we recall the
theorem from the introduction which deals
with the case $k>2$.
\begin{theorem}
\label{thm:2}
Put
$${\mathcal A}_k=\begin{cases}
\{m~{\text{\rm odd}}:\text{\rm if~}p\mid m,~{\text{\rm then}}~p\mid k\}\text{~if~}
k\not\equiv 6\pmod*{9};\cr
\{m~{\text{\rm odd}},~9\nmid m:\text{\rm if~}p\mid m,~{\text{\rm then}}~p\mid k\}\text{~if~}
k\equiv 6\pmod*{9},
\end{cases}
$$
and $${\mathcal B}_k
=\begin{cases}
\{m~{\text{\rm even}}:\text{\rm if~}p\mid m,~{\text{\rm then}}~p\mid k(k+1)\}
\text{~if~}
k\not\equiv 2\pmod*{9};\cr
\{m~{\text{\rm even},~9\nmid m}:\text{\rm if~}p\mid m,~{\text{\rm then}}~p\mid k(k+1)\}
\text{~if~}
k\equiv 2\pmod*{9}.
\end{cases}
$$
Let $k>2$. We have
\begin{equation}
\label{dkinequal2}
{\mathcal D}_{k}(n)\le \min\{m\ge n:
m\in {\mathcal A}_{k}\cup {\mathcal B}_{k}\},
\end{equation}
with equality if the interval $[n,3n/2)$ contains an integer 
$m\in {\mathcal A}_{k}\cup {\mathcal B}_{k}$.
There are at most 
finitely many $n$ for which in \eqref{dkinequal2} strict inequality holds.
Furthermore, we have 
\begin{equation}
\label{Dkn=n}
{\mathcal D}_{k}(n)=n 
\iff n\in {\mathcal A}_{k}\cup {\mathcal B}_{k}.
\end{equation}
\end{theorem}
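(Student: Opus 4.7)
The strategy is to characterize the moduli $m$ for which $U_0(k), \ldots, U_{m-1}(k)$ are pairwise distinct modulo $m$---equivalently, for which $L(m) := \max\{n : U_0, \ldots, U_{n-1}\ \text{distinct mod}\ m\}$ equals $m$---and then show this set is exactly $\mathcal{A}_k \cup \mathcal{B}_k$. The core of the proof is the analogue of Lemma \ref{lem:last}: for $m \in \mathcal{A}_k \cup \mathcal{B}_k$, one has $U_i \equiv U_j \pmod{m}$ if and only if $i \equiv j \pmod{m}$. By the Chinese Remainder Theorem this reduces to prime powers. For $p = 2$ this is Lemma \ref{poweroftwo}. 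For odd $p \mid k$, the recurrence modulo $p$ has characteristic polynomial $(x-1)^2$, so $U_n \equiv n \pmod{p}$ and there are no unwanted collisions; Lemma \ref{speciaal} extends this to higher powers except in the exceptional case $p=3$, $k \equiv 6 \pmod{9}$, which is why $9 \nmid m$ is required there. For odd $p \mid k+1$ the characteristic becomes $(x+1)^2$, so $U_n \equiv n(-1)^{n-1} \pmod{p}$, and collisions of opposite parity $i + j \equiv 0 \pmod{p}$ appear. These additive collisions are killed by the condition $i \equiv j \pmod{2}$ that is forced whenever $2 \mid m$ (by Lemma \ref{poweroftwo}). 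This parity dichotomy is the source of the asymmetry between $\mathcal{A}_k$ (odd $m$, primes dividing only $k$) and $\mathcal{B}_k$ (even $m$, primes dividing $k(k+1)$).

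With the characterization $L(m) = m \iff m \in \mathcal{A}_k \cup \mathcal{B}_k$ established, the inequality \eqref{dkinequal2} is immediate: for $m \in \mathcal{A}_k \cup \mathcal{B}_k$ with $m \geq n$, $m$ discriminates the first $n$ terms, so $\mathcal{D}_k(n) \leq m$. The equivalence \eqref{Dkn=n} follows from the trivial bound $\mathcal{D}_k(n) \geq n$: if $n \in \mathcal{A}_k \cup \mathcal{B}_k$ then $\mathcal{D}_k(n) \leq n$ by \eqref{dkinequal2}, while if $\mathcal{D}_k(n) = n$ then $n$ itself discriminates $n$ terms, forcing $L(n) = n$ and hence $n \in \mathcal{A}_k \cup \mathcal{B}_k$.

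For the equality when $[n, 3n/2)$ contains some $m \in \mathcal{A}_k \cup \mathcal{B}_k$, I would show that any $m' \notin \mathcal{A}_k \cup \mathcal{B}_k$ satisfies $L(m') \leq 2m'/3$, following the shape of the computation in \eqref{zmm2}. Such an $m'$ must have either an odd prime factor $p \nmid \Delta(k)$ (contributing at most a factor $(p+1)/(2p) < 2/3$ to $L(m')/m'$), or, when $m'$ is odd, a prime $p \mid k+1$ with the resulting additive-collision factor of $1/2$, or the exceptional case $9 \mid m'$ when $k \equiv 2$ or $6 \pmod{9}$ (factor $1/3$). In each situation $n \leq m' < 3n/2$ forces $L(m') < n$, contradicting that $m'$ discriminates $n$ terms. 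The finitely-many-exceptions clause reduces to a gap estimate: since $k > 2$ implies $k(k+1)$ has at least two distinct prime factors, $\mathcal{B}_k$ contains all integers of the form $2^a p^b$ for a fixed odd prime $p \mid k(k+1)$, and by Weyl equidistribution (as in the passage following \eqref{ongelijk}) the consecutive ratios of these integers tend to $1$, so $[n, 3n/2)$ contains such an integer for all sufficiently large $n$.

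The main obstacle will be the careful treatment of additive collisions in the first step, handling all odd prime power cases uniformly (including the degenerate $p = 3$ case via Lemma \ref{speciaal}(ii)) and verifying precisely why evenness of $m$ suffices to eliminate the opposite-parity collisions. Secondary technical work lies in the $L(m') \leq 2m'/3$ bound for $m' \notin \mathcal{A}_k \cup \mathcal{B}_k$, which is the numerical reason behind the $[n, 3n/2)$ condition appearing in the theorem.
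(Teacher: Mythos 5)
Your proposal follows essentially the same route as the paper's proof: the iff-characterization you call the analogue of Lemma \ref{lem:last} is exactly the paper's Lemma \ref{lem:iui} (assembled from Lemmas \ref{poweroftwo}, \ref{elf} and \ref{tweedeelgevallen} via the Chinese Remainder Theorem), your bound $L(m')\le 2m'/3$ for $m'\notin\mathcal{A}_k\cup\mathcal{B}_k$ corresponds to the paper's combination of $z(m')\le\alpha_k m'\le 2m'/3$ (Lemma \ref{appearance}) with the explicit opposite-parity collision at $i=(p^a-1)m_1/2$, $j=(p^a+1)m_1/2$ when $z(m')=m'$ but an odd prime factor divides $k+1$, and your equidistribution argument for the finitely-many-exceptions clause is the paper's Lemma \ref{eventuallythere}. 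Two points deserve attention. First, for odd prime powers the step from mod $p$ to mod $p^b$ is not supplied by Lemma \ref{speciaal} alone (that lemma only controls $z(p^b)$): one needs the full statements of Lemmas \ref{elf} and \ref{tweedeelgevallen}, whose proofs work with prime ideals of $\mathbb{Q}(\sqrt{k(k+1)})$ and the quadratic congruence satisfied by $\alpha^i,\alpha^j$; you correctly flag this as the main obstacle, but be aware it is genuinely where the work lies, not a routine extension. Second, in the final density step the relevant fact is not that $k(k+1)$ has two distinct prime factors, but that it has an \emph{odd non-special} prime factor $p$: if the only available odd prime were a special $3$, then $2^a3^b$ with $9\mid 3^b$ need not belong to $\mathcal{B}_k$, and your claim that all $2^ap^b$ lie in $\mathcal{B}_k$ would fail. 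The paper secures this with Lemma \ref{dioflauw}, which for $k>2$ reduces to the Diophantine equation $k(k+1)=2^a\cdot 3^b$; you should add this (easy) step to make the argument complete.
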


In our proof of this result the rank of appearance
plays a crucial role. Its most 
important properties are summarized in Lemma \ref{appearance}.
\subsection{The index of appearance}
\subsubsection{The case where $p\mid k(k+1)$}
The index of appearance for
primes $p$ dividing $k(k+1)$ is determined in
Lemma \ref{speciaal} for $p>2$.
By Lemma \ref{poweroftwo} we have
$z(2^b)=2^b$. In general $z(p^b)=p^b$ for
these primes, but for a prime which we call {\it special} a
complication can arise giving rise to
$z(p^b)\mid p^{b-1}$ for $b\ge 2$.
\begin{defi}
A prime $p$ is said to be special if
$p|k(k+1)$ and $p^2|U_p$.
\end{defi}
The special feature of a special prime
$p$ is that $p^b$ with $b\ge 2$ cannot
divide a discriminator value.
Recall that $z(p^a)=p^{\max\{a-c,0\}} z(p)$, where $c=\nu_p(U_{z(p)})$ by Lemma \ref{lem:flauw}.
\begin{lemma}
\label{drienogmaals}
Let $p\ge 3$ be an odd prime.
If $z(p^{b})|p^{b-1}$, then $m=p^bm_1$
with $p\nmid m_1$ is not a discriminator
value.
\end{lemma}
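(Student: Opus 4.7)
The plan is to find, for each $m=p^bm_1$ satisfying the hypothesis, a strictly smaller integer $m^{*}<m$ that discriminates at least as many initial terms of $U(k)$ as $m$ does; since $m$ is a discriminator value if and only if $N(m)>N(m')$ for every $m'<m$ (where $N(m)$ denotes the largest $n$ for which $U_0(k),\dots,U_{n-1}(k)$ are pairwise distinct modulo $m$), this immediately rules out $m$ as a value of $\mathcal{D}_k$.

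First I would extract the structural consequences of $z(p^{b})\mid p^{b-1}$. Combining this with the formula $z(p^{b})=p^{\max\{b-c,0\}}z(p)$ from Lemma \ref{lem:flauw}(iii) forces $z(p)$ to be a pure power of $p$, which via parts (i)--(ii) of the same lemma yields $z(p)=p$, hence $p\mid\Delta(k)=16k(k+1)$, and since $p$ is odd this gives $p\mid k(k+1)$. An easy comparison shows $c:=\nu_p(U_p)\ge 2$, so $p$ is special. By Lemma \ref{lem:flauw}(v) one obtains $z(m)=\operatorname{lcm}[z(p^{b}),z(m_1)]\le p^{b-1}z(m_1)$, whence $N(m)\le z(m)\le (m/p)\cdot(z(m_1)/m_1)$.

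For the choice of $m^{*}$, the cleanest candidate is $m^{*}=m/p=p^{b-1}m_1$. One then tries to show that no new coincidence $U_i\equiv U_j\pmod{m^{*}}$ appears for $0\le i<j\le N(m)$ beyond those already present modulo $m$, i.e. that $\nu_p(U_j-U_i)\ge b-1$ in this range forces $\nu_p(U_j-U_i)\ge b$. This reduces to a $p$-adic valuation calculation via the factorisation $U_j-U_i=U_{(j-i)/2}V_{(j+i)/2}$ (for $j\equiv i\pmod 2$, the opposite parity being handled using the identity $U_n\equiv n\pmod 2$ from Lemma \ref{poweroftwo}), together with the ramification of $p$ in $\mathbb{Z}[\sqrt{k(k+1)}]$, which makes $\nu_p(V_n)=0$ for every $n$, and the lifting-the-exponent identity $\nu_p(U_n)=c-1+\nu_p(n)$ for $p\mid n$.

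The main obstacle I foresee is that this direct argument closes the gap only when $b\le c+1$: for larger $b$, new coincidences modulo $m/p$ genuinely appear at indices with $\nu_p(j-i)=b-c$, and a different $m^{*}$ must be chosen. In this regime I would fall back on a density argument, using the fact---to be established alongside the upper bound in Theorem \ref{main2}---that every $\mu\in\mathcal{A}_k\cup\mathcal{B}_k$ satisfies $N(\mu)\ge\mu$; it then suffices to exhibit such a $\mu$ in the interval $[N(m),m)$. The bound on $N(m)$ from the previous step, combined with the plentiful supply of elements of $\mathcal{A}_k\cup\mathcal{B}_k$ below $m$ (for instance the powers of $p$ themselves when $p\mid k$, or numbers of the form $2^{a}p^{c'}$ when $p\mid k+1$), provides the required $\mu$ subject to a uniform gap estimate on $\mathcal{A}_k\cup\mathcal{B}_k$---the latter being the technical heart of the step.
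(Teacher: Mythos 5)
You derive the right key bound -- from $z(p^b)\mid p^{b-1}$ and $z(m_1)\le m_1$ one gets $U_{p^{b-1}z(m_1)}\equiv U_0\equiv 0\pmod*{m}$, so the number $N(m)$ of initial terms that $m$ can discriminate satisfies $N(m)\le p^{b-1}z(m_1)\le m/p\le m/3$ -- but you then abandon the easy conclusion and the rest of the argument has a genuine gap. Your primary candidate $m^{*}=m/p$, attacked via $p$-adic valuations of $U_j-U_i$, is acknowledged by you to fail once $b>c+1$, and the fallback you propose for that regime is not a proof: it invokes a \emph{uniform} gap estimate on ${\mathcal A}_k\cup{\mathcal B}_k$ in intervals $[N(m),m)$ which is nowhere established and which the paper's own tool (Lemma \ref{eventuallythere}, Bertrand's postulate for S-units) only supplies asymptotically, i.e.\ for all sufficiently large $m$. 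That would at best show the conclusion with finitely many possible exceptions, whereas the lemma asserts that \emph{no} $m=p^bm_1$ of this shape is ever a discriminator value; moreover, leaning on facts ``to be established alongside Theorem \ref{main2}'' is dangerous, since this lemma is part of the machinery feeding into that theorem.

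The missing idea is that the comparison modulus need not be built from $p$, $m_1$ or ${\mathcal A}_k\cup{\mathcal B}_k$ at all: powers of two do the job unconditionally. By Lemma \ref{poweroftwo}, $2^e$ discriminates $U_0,\ldots,U_{n-1}$ whenever $2^e\ge n$, and since $N(m)\le m/3$, the interval $[m/3,m)$ (of ratio $3>2$) always contains a power of two $2^e$; this $2^e$ discriminates at least as many initial terms as $m$ does and is smaller than $m$, so $m$ can never be the minimal discriminating modulus. This is exactly the paper's two-line proof, and it makes all of your structural analysis ($z(p)=p$, $p$ special, $c\ge 2$, the lifting-the-exponent computation, and the case split on $b$) unnecessary.
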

\begin{proof}
Taking $i=0$ and $j=p^{b-1}z(m_1)$ we have
$U_i\equiv U_j\equiv 0 \pmod*{m}.$
It follows that $n\le p^{b-1} z(m_1)\le m/p$ so any power of 2 in $[m/3,m)$ (and such a
power exists) is a better discriminator than $m$.
\end{proof}
By Lemma \ref{speciaal} only 3 can be special.
\subsubsection{The case where $p\nmid k(k+1)$}
Let us now look at odd prime numbers $p$ such that $p\nmid k(k+1)$. These come in two  flavors according to the sign of 
\begin{equation}
\label{ep}
e_p=\Big(\frac{k(k+1)}{p}\Big).
\end{equation}
Suppose that $e_p=1$. Then
either
$$
\left(\frac{k}{p}\right)=\left(\frac{k+1}{p}\right)=1\quad {\text{\rm or}}\quad \left(\frac{k}{p}\right)=\left(\frac{k+1}{p}\right)=-1.
$$
In the first case,
\begin{eqnarray*}
\beta^p & = & ({\sqrt{k+1}}+{\sqrt{k}})^p\equiv {\sqrt{k+1}} (k+1)^{(p-1)/2}+{\sqrt{k}} k^{(p-1)/2}\\
& \equiv &  {\sqrt{k+1}}+{\sqrt{k}}\equiv \beta\pmod*{p}.
\end{eqnarray*}
In the second case, a similar calculation shows that $\beta^p\equiv -\beta$. Thus, $\beta^{p-1}\equiv \pm 1\pmod* p$ and since $\alpha=\beta^2$, we get that $\alpha^{(p-1)/2}=\beta^{p-1}\equiv \pm 1\pmod* p$. Since the
last congruence implies that $\alpha^{-(p-1)/2}\equiv \pm 1\pmod* p$ we obtain
on subtracting these two congruences that
$p\mid U_{(p-1)/2}$. Thus, $z(p)\mid (p-1)/2$.
In case $e_p=-1$, a similar calculation shows that $\beta^p\equiv \pm \beta^{-1}\pmod* p$, so $\beta^{p+1}\equiv \pm 1\pmod p$. This shows that $\alpha^{(p+1)/2}\equiv \pm 1\pmod* p$, which leads to $z(p)\mid (p+1)/2$.  
There is one more observation which is useful here. Assume that $e_p=-1$, which implies
that $z(p)\mid (p+1)/2$. Suppose that $p\equiv 3\pmod* 4$. Then $(p+1)/2$ is even. Assume further that
$$
\left(\frac{k+1}{p}\right)=1\quad{\text{\rm ~~and ~~}}\quad\left(\frac{k}{p}\right)=-1.
$$
In this case, by the above arguments, we have that $\beta^{p}\equiv \beta^{-1}\pmod* p$, so $\beta^{p+1}\equiv 1\pmod* p$. This gives $\alpha^{(p+1)/2}\equiv 1\pmod* p$. Since $(p+1)/2$ is even we conclude that
$$
p\mid(\alpha^{(p+1)/4}-1)(\alpha^{(p+1)/4}+1).
$$
Since $p$ is inert in ${\mathbb K}$, we get that $\alpha^{(p+1)/4}\equiv \pm 1\pmod* p$, which later leads to $p\mid U_{(p+1)/4}$, Hence, $z(p)\mid (p+1)/4$ in this case.

\subsubsection{General $m$}
\begin{lemma}
\label{appearance} Let $k\ge 1$.
We have $z(m)=m$ if and only if
$$
\begin{cases}
m\in {\mathcal P}(k(k+1)),~9\nmid m;\cr
m\in {\mathcal P}(k(k+1)),~9\mid m,~
\text{and~}3{~is~not~special.}
\end{cases}
$$
For the remaining integers $m$ we have
$$z(m)\le \alpha_k m,$$
with
\begin{equation}
\label{alphadef}
\alpha_k:=\lim\sup_{m\rightarrow \infty}\Big\{\frac{z_k(m)}{m}:z_k(m)<m\Big\}.
\end{equation}
One has
\begin{equation}
\label{alpha}
\alpha_k=\lim\sup_{p\rightarrow \infty}\Big\{\frac{z_k(p)}{p}:z_k(p)<p\Big\}.
\end{equation}
Furthermore, we have 
$\alpha_k=2/3$ if $k\equiv 1\pmod*{3}$ and
$\alpha_k\le 3/5$ otherwise.
\end{lemma}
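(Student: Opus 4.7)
The plan is to reduce every assertion to a statement about individual prime powers via the multiplicativity $z(m)=\mathrm{lcm}[z(p_1^{b_1}),\dots,z(p_r^{b_r})]$ of Lemma \ref{lem:flauw}(v). Before invoking it I would first record the auxiliary inequality $z(p^b)\le p^b$ for every prime power. Indeed, for $p=2$ this is Lemma \ref{poweroftwo}; for odd $p\mid k(k+1)$ it follows from Lemma \ref{speciaal} together with Lemma \ref{lem:flauw}(i),(iii); and for odd $p\nmid k(k+1)$ the quadratic-residue discussion immediately preceding the lemma gives $z(p)\le(p+1)/2$, so that $z(p^b)\le p^{b-1}(p+1)/2<p^b$ by Lemma \ref{lem:flauw}(iii).

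For the characterisation of $z(m)=m$, write $m=\prod p_i^{b_i}$; since the $p_i^{b_i}$ are pairwise coprime with $z(p_i^{b_i})\le p_i^{b_i}$, the equality $\mathrm{lcm}[z(p_i^{b_i})]=\prod p_i^{b_i}$ forces $z(p_i^{b_i})=p_i^{b_i}$ for every $i$. From the above classification this happens iff every $p_i\mid k(k+1)$ (automatic for $p_i=2$) and the special-$3$ obstruction is avoided: by Lemma \ref{speciaal}(ii), $z(3^b)<3^b$ for every $b\ge 2$ precisely when $k\equiv 2,6\pmod*{9}$. This yields the stated biconditional.

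For the bound $z(m)\le\alpha_k m$ when $z(m)<m$, the plan is to choose any prime power $p^{b}\|m$ with $z(p^b)<p^b$. The trivial estimate $z(m)\le\prod z(p_i^{b_i})$ combined with $z(p_i^{b_i})\le p_i^{b_i}$ immediately yields $z(m)/m\le z(p^b)/p^b$, and the formula $z(p^b)=p^{\max\{b-c,0\}}z(p)$ with $c\ge 1$ (Lemma \ref{lem:flauw}(iii)) gives $z(p^b)/p^b\le z(p)/p$ for all $b\ge 1$. This reduces the suprema in \eqref{alphadef} and \eqref{alpha} to a supremum over bad primes, establishing their equality.

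To compute $\alpha_k$ explicitly, observe that any odd bad prime $p$ satisfies $z(p)/p\le(p+1)/(2p)$, which equals $2/3$ at $p=3$ and is $\le 3/5$ for $p\ge 5$. The prime $3$ is bad precisely when $3\nmid k(k+1)$, i.e.\ $k\equiv 1\pmod*{3}$; in that case $U_2(k)=4k+2\equiv 0\pmod*{3}$ shows $z(3)=2$, attaining the ratio $2/3$, so $\alpha_k=2/3$. For $k\not\equiv 1\pmod*{3}$ the only bad primes are $\ge 5$, and the only additional contribution is a special $3^b$ with $b\ge 2$ of ratio $\le 1/3$; hence $\alpha_k\le 3/5$. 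The main technical point I expect is the bookkeeping around the prime $3$---whether it is bad (controlled by $k\bmod 3$) and whether it is special (controlled by $k\bmod 9$)---and verifying that the special-$3$ contribution indeed stays below $3/5$.
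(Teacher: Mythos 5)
Your proposal is correct and follows essentially the same route as the paper's proof: reduce to prime powers via the lcm formula of Lemma \ref{lem:flauw}(v), use the classification of $z(p^b)$ (Lemmas \ref{poweroftwo}, \ref{speciaal} and the quadratic-residue discussion) for the characterization of $z(m)=m$, then bound $z(m)/m$ by a product of $z(p_i)/p_i$ and conclude with $z(3)=2$ for $k\equiv 1\pmod*{3}$ and the monotonicity of $(p+1)/(2p)$. If anything, you are slightly more careful than the paper in explicitly accounting for the special-$3^b$ contribution (ratio at most $1/3$) when passing from prime powers to primes.
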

\begin{corollary}
\label{zongelijk}
We have $z(m)\le m$.
\end{corollary}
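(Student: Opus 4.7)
My strategy is to reduce everything to prime powers via the identity
$z(m)=\mathrm{lcm}[z(p^b):p^b\|m]$ from Lemma~\ref{lem:flauw}(v), then
apply the bounds on $z(p^b)/p^b$ collected in the preceding lemmas and in
the discussion of primes $p\nmid k(k+1)$.

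First, for the characterisation of $z(m)=m$, I would combine
Lemmas~\ref{poweroftwo} and~\ref{speciaal}: for a prime $p\mid k(k+1)$ we
always have $z(p^b)=p^b$, with the sole exception of $p=3$ being special
and $b\ge 2$, in which case $z(3^b)\mid 3^{b-1}$. For $p\nmid k(k+1)$ the
computations just before this lemma give $z(p)\mid (p\pm 1)/2$, so
$z(p^b)\le p^{b-1}(p+1)/2<p^b$ by Lemma~\ref{lem:flauw}(iii). When $m$
satisfies the stated conditions, the $z(p_i^{b_i})=p_i^{b_i}$ are pairwise
coprime prime powers whose $\mathrm{lcm}$ equals their product $m$, giving
$z(m)=m$; in every other case at least one factor drops below $p_i^{b_i}$
and the bound below forces $z(m)<m$.

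For the upper bound I use $\mathrm{lcm}[a_1,\ldots,a_r]\le a_1\cdots a_r$ to
write
$$
z(m)\le \prod_{i=1}^{r}z(p_i^{b_i})
=m\prod_{i=1}^{r}\frac{z(p_i^{b_i})}{p_i^{b_i}}.
$$
Every factor is $\le 1$, which already proves Corollary~\ref{zongelijk}. If
$z(m)<m$ then some factor is strictly less than $1$: either $p_i\nmid k(k+1)$
contributes $z(p_i)/p_i\le (p_i+1)/(2p_i)$, or $9\mid m$ with $3$ special
contributes $\le 1/3$. Hence $z(m)/m$ is dominated by $z(p)/p$ for some
prime $p$ with $z(p)<p$, which yields the reduction~\eqref{alpha} (a fixed
prime $p_0$ is realised infinitely often along $m=p_0^b$ or $m=p_0\cdot n$
with $n\in\mathcal{P}(k(k+1))$ coprime to $z(p_0)$). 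Tabulating the
possibilities: $p=3$ can occur only when $k\equiv 1\pmod*{3}$, and then
$z(3)\mid 2$ with $z(3)\ne 1$ forces $z(3)=2$, so $z(3)/3=2/3$; $p=5$ gives
$z(5)\le 3$, so $z(5)/5\le 3/5$; and $p\ge 7$ gives $z(p)/p\le
(p+1)/(2p)\le 4/7<3/5$. The supremum is therefore $2/3$ when
$k\equiv 1\pmod*{3}$ and at most $3/5$ otherwise.

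The most delicate point is showing that the value $2/3$ is actually attained
infinitely often when $k\equiv 1\pmod*{3}$, rather than merely bounded. The
obvious candidate $m=3^b$ works whenever $\nu_3(U_2)=\nu_3(2(2k+1))=1$, for
then Lemma~\ref{lem:flauw}(iii) yields $z(3^b)=2\cdot 3^{b-1}$. In the
exceptional subcase $k\equiv 4\pmod*{9}$, where the $3$-adic valuation of
$U_2$ exceeds $1$, this sequence degenerates; I would instead use
$m=3\cdot q^b$ for an odd prime $q\ne 3$ dividing $k(k+1)$, where
$z(q^b)=q^b$ and $\mathrm{lcm}(2,q^b)=2q^b$ give $z(m)/m=2/3$ along an
infinite sequence. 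Such a $q$ exists for every $k\ge 4$ with
$k\equiv 1\pmod*{3}$, while $k=1$ is covered by the first candidate since
$\nu_3(U_2)=\nu_3(6)=1$.
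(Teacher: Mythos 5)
Your proof is correct and follows essentially the same route as the paper: the paper establishes $z(m)\le m$ inside the proof of Lemma \ref{appearance}, via exactly the decomposition $z(m)=\mathrm{lcm}[z(p_i^{b_i})]\le\prod_i z(p_i^{b_i})$ together with $z(2^b)=2^b$, $z(p^b)\le p^b$ for odd $p\mid k(k+1)$ (Lemmas \ref{poweroftwo}, \ref{speciaal}), and $z(p)\mid(p\pm1)/2$ hence $z(p^b)<p^b$ for $p\nmid k(k+1)$. The additional material you include on the characterization of $z(m)=m$ and on the value of $\alpha_k$ is not needed for the corollary itself, but it matches the paper's Lemma \ref{appearance} and introduces no error.
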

\begin{corollary}
\label{cor:appearance}
We have
\begin{equation*}
{\mathcal A}_k=\{m~{\text{\rm odd}}:z(m)=m\text{\rm ~and~}
m\in {\mathcal P}(k)\},
\end{equation*}
and
\begin{equation*}
{\mathcal B}_k=\{m~\text{\rm even}:z(m)=m\}.
\end{equation*}
\end{corollary}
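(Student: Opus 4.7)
The plan is to derive the corollary as a direct unpacking of Lemma \ref{appearance}, combined with the fact, implicit in Lemma \ref{speciaal}, that $3$ is the only prime that can be special, and that $3$ is special precisely for $k\equiv 2,6\pmod{9}$. With these two inputs in hand, both equalities in the corollary reduce to a short case analysis on the residue of $k$ modulo $9$ that matches the piecewise definitions of $\mathcal{A}_k$ and $\mathcal{B}_k$ from Theorem \ref{main2}.

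For the first assertion I would fix an odd $m$ and argue both inclusions. For $\supseteq$, suppose $z(m)=m$ and $m\in\mathcal{P}(k)$; then every prime divisor of $m$ already divides $k$, which is the bulk of what $\mathcal{A}_k$ requires, and the only thing left to check is that $9\nmid m$ in the case $k\equiv 6\pmod{9}$. But in that case $3$ is special, so $9\mid m$ would contradict $z(m)=m$ by Lemma \ref{appearance}. For $\subseteq$, suppose $m\in\mathcal{A}_k$; then $m\in\mathcal{P}(k)\subseteq\mathcal{P}(k(k+1))$, which is the first hypothesis of Lemma \ref{appearance}. I would split on $k\bmod 9$: if $3\nmid k$ then $3\nmid m$ hence $9\nmid m$ trivially; if $k\equiv 0,3\pmod 9$ then $3$ is not special by Lemma \ref{speciaal}; and if $k\equiv 6\pmod 9$ the definition of $\mathcal{A}_k$ itself imposes $9\nmid m$. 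In each branch Lemma \ref{appearance} concludes $z(m)=m$.

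The second assertion is proved in exactly the same spirit for even $m$. One direction uses that $z(m)=m$ forces $m\in\mathcal{P}(k(k+1))$, plus $9\nmid m$ whenever $3$ is special; this is precisely the way $\mathcal{B}_k$ is defined, with the exclusion $9\nmid m$ being active in the relevant residue class. The converse is again a residue-by-residue check on $k\bmod 9$, using that $3$ divides at most one of $k$ and $k+1$ so that the specialness of $3$ correlates cleanly with the piecewise exclusion in the definition of $\mathcal{B}_k$. The only real obstacle is the bookkeeping of matching the two piecewise definitions to the possible behaviour of $3$; there is no substantive difficulty beyond Lemma \ref{appearance} itself, which is precisely why the authors state this as a corollary rather than a separate lemma.
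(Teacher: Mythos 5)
Your overall strategy --- unpacking Lemma \ref{appearance} together with Lemma \ref{speciaal} and doing a case check on $k$ modulo $9$ --- is exactly what the paper intends (it states the corollary without a separate proof, as an immediate consequence of Lemma \ref{appearance}), and your argument for the ${\mathcal A}_k$ identity is complete and correct: when $3$ is special of Type II ($k\equiv 2\pmod{9}$) it does not divide $k$, so no $m\in{\mathcal P}(k)$ is affected, and when $k\equiv 6\pmod{9}$ the clause $9\nmid m$ in the definition of ${\mathcal A}_k$ does precisely the work you describe.

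The even case, however, contains a genuine gap, and it sits exactly where you write that ``the specialness of $3$ correlates cleanly with the piecewise exclusion in the definition of ${\mathcal B}_k$.'' It does not: by Lemma \ref{speciaal} the prime $3$ is special both for $k\equiv 2\pmod{9}$ and for $k\equiv 6\pmod{9}$, and in both cases $3\mid k(k+1)$, whereas the definition of ${\mathcal B}_k$ in Theorem \ref{main2} imposes $9\nmid m$ only when $k\equiv 2\pmod{9}$. Consequently the inclusion ${\mathcal B}_k\subseteq\{m~\text{even}:z(m)=m\}$, which you dispatch as a ``residue-by-residue check,'' breaks down for $k\equiv 6\pmod{9}$ under the printed definition: take $k=6$ and $m=18$; then $m$ is even, its prime factors $2,3$ divide $k(k+1)=42$, and $6\not\equiv 2\pmod{9}$, so $m\in{\mathcal B}_6$, yet $U_3(6)=675=3^3\cdot 25$ gives $z(9)=3$, hence $z(18)=\mathrm{lcm}[z(2),z(9)]=6<18$. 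Your remark that $3$ divides at most one of $k$ and $k+1$ rescues the odd case (where only ${\mathcal P}(k)$ is involved) but not the even one, because ${\mathcal B}_k$ is built from ${\mathcal P}(k(k+1))$. For the asserted equality to hold, the exclusion $9\nmid m$ in ${\mathcal B}_k$ must be active whenever $3$ is special, i.e.\ for $k\equiv 6\pmod{9}$ as well as $k\equiv 2\pmod{9}$; as written, your proof simply asserts the step that fails, so at minimum you must isolate this case and either repair the definition you are working with or restrict the claim, rather than treat it as bookkeeping.
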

\begin{proof}[Proof of Lemma \ref{appearance}]
By the above discussion if $p\nmid k(k+1)$, then
$z(p^b)<p^b$. Thus if $z(m)=m$, then 
$m\in {\mathcal P}(k(k+1))$. The first assertion
now follows by Lemma 
\ref{poweroftwo} (which shows that
$z(2^b)=2^b$) and Lemma \ref{speciaal} and
the observation that if 
$m=\prod_{i=1}^s p_i^{b_i}$
is the factorization of $m$ with
$z(p_i^{b_i})=p_i^{b_i}$, then
$$z(m)=\text{\rm lcm}(z(p_1^{b_1}),\ldots,z(p_s^{b_s}))=
\prod_{i=1}^s p_i^{b_i}=m.$$
\par If $m=\prod_{i=1}^s p_i^{b_i}$ is the
factorization of {\it any} integer, then
$$\frac{z(m)}{m}\le 
\prod_{i=1}^s \frac{z(p_i^{b_i})}{p_i^{b_i}}
\le \prod_{i=1}^s \frac{z(p_i)}{p_i}.$$
{}From these inequalities we infer the
truth of \eqref{alpha}. The proof is 
concluded on noting that 
$$z(3)=
\begin{cases}
2 & \text{if~}k\equiv 1\pmod*{3};\cr
3 & \text{otherwise,}
\end{cases}
$$
and
that $(p+1)/2p$ is a decreasing
function of $p$.
\end{proof}
It is easy to see that if there is a prime
$p$ with $z(p)=(p+1)/2$, then
$$\alpha_k=\frac{q+1}{2q},$$
where $q$ is the smallest prime such 
that $z(q)=(q+1)/2$.

\subsection{The congruence $U_i(k)\equiv U_j(k)\pmod* m$}
In this subsection we study the congruence $U_i(k)\equiv U_j(k)\pmod* m$. As we said before, it suffices to study it modulo prime powers. For powers of $2$, this has been done at the beginning of Section \ref{sec:2}.
So, we deal with prime powers $p^b$.
Recall that the discriminant
$\Delta(k)$ equals $16k(k+1)$. It turns
out that primes $p$ dividing $\Delta(k)$ are easier to understand
than the others.
From now on, we eliminate the index $k$ from $U_n(k),~\alpha(k),~\Delta(k)$ and so on. We treat the case when
$p\mid k(k+1)$. In case $m$ is even, there are two subcases, one easy and one harder, according to whether $p\mid k$ or $p\mid (k+1)$.

\begin{lemma}
\label{elf}
Assume $p\mid k$ is odd. Then $U_i\equiv U_j\pmod* {p^b}$ if and only if $i\equiv 
j\pmod* {z(p^b)}$.
\end{lemma}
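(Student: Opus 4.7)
The plan is to work inside the ring $\Z[\alpha]$ and prove the intermediate fact that, under the hypothesis $p\mid k$ (with $p$ odd), the element $\alpha$ has order exactly $z:=z(p^b)$ in the unit group of $R:=\Z[\alpha]/p^b\Z[\alpha]$. The lemma will then follow by a short factorisation of $(\alpha-\alpha^{-1})(U_i-U_j)$.

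The preliminary observation is that because $p\mid k$, we have $4k+2\equiv 2\pmod p$, so $\alpha$ is a root of $(x-1)^2$ modulo $p$; equivalently, $\alpha-1$ is nilpotent modulo $p$ and
$$\alpha^n\equiv 1+n(\alpha-1)\pmod{p}$$
by the binomial theorem. In particular, for every $n$ the element $\alpha^n+1$ reduces mod $p$ to $2+n(\alpha-1)$, which has invertible constant term; since the kernel $p\Z[\alpha]/p^b\Z[\alpha]$ is a nilpotent ideal of $R$, this unit mod $p$ lifts to a unit of $R$.

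I would next establish the order claim. From $p^b\mid U_z$ we get
$$\alpha^z-\alpha^{-z}=(\alpha-\alpha^{-1})U_z\in p^b\Z[\alpha],$$
so $\alpha^{2z}\equiv 1\pmod{p^b}$; since $\alpha^z+1$ is a unit of $R$, the other factor $\alpha^z-1$ must lie in $p^b\Z[\alpha]$, i.e.\ $\alpha^z\equiv 1\pmod{p^b}$. Conversely, if $\alpha^m\equiv 1\pmod{p^b}$, then $(\alpha-\alpha^{-1})U_m=\alpha^m-\alpha^{-m}\in p^b\Z[\alpha]$; using the $\Z$-basis $\{1,2\sqrt{k(k+1)}\}$ of $\Z[\alpha]$ and comparing the $\sqrt{k(k+1)}$-coefficient yields $2U_m\in p^b\Z$, whence $p^b\mid U_m$ (because $p$ is odd), so $z\mid m$.

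To conclude, I would use the identity
$$(\alpha-\alpha^{-1})(U_i-U_j)=(\alpha^i-\alpha^{-i})-(\alpha^j-\alpha^{-j})=\alpha^{-i}(\alpha^{i-j}-1)(\alpha^{i+j}+1).$$
Because $\alpha^{-i}$ and $\alpha^{i+j}+1$ are units of $R$, and because (by the same basis argument as above) $(\alpha-\alpha^{-1})(U_i-U_j)\in p^b\Z[\alpha]$ is equivalent to $p^b\mid U_i-U_j$, the congruence $U_i\equiv U_j\pmod{p^b}$ is equivalent to $\alpha^{i-j}\equiv 1\pmod{p^b}$, which by the order claim is equivalent to $z(p^b)\mid i-j$. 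The only technical obstacle is the bookkeeping that transfers divisibility in $\Z[\alpha]$ to divisibility of the rational integer $U_i-U_j$, which is handled uniformly by the basis $\{1,2\sqrt{k(k+1)}\}$ together with the odd-$p$ hypothesis.
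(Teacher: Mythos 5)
Your proof is correct, but it takes a genuinely different route from the paper's. The paper argues prime ideal by prime ideal in the quadratic field $\mathbb{Q}[\sqrt{d}]$ (with $k(k+1)=du^2$): it notes that $\alpha^i$ and $\alpha^j$ both satisfy $x^2-4\sqrt{k(k+1)}\lambda x-1\equiv 0$ modulo $\pi^{eb+ae/2}$, factors the difference as $(\alpha^i-\alpha^j)(\alpha^i+\alpha^j-4\sqrt{k(k+1)}\lambda)$, uses $\alpha\equiv 1\pmod{\pi}$ to discard the second factor, and then converts ideal divisibility back into $p^b\mid U_{i-j}$ by tracking the ramification index $e$ and the exact $\pi$-valuation $ae/2$ of $\sqrt{k(k+1)}$. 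You instead stay inside the (possibly non-maximal) order $\mathbb{Z}[\alpha]$ modulo $p^b$: the nilpotency of $\alpha-1$ modulo $p$ (from $(\alpha-1)^2=4k\alpha$) makes every $\alpha^n+1$ a unit of $\mathbb{Z}[\alpha]/p^b\mathbb{Z}[\alpha]$, your identity $(\alpha-\alpha^{-1})(U_i-U_j)=\alpha^{-i}(\alpha^{i-j}-1)(\alpha^{i+j}+1)$ replaces the paper's factorization, and comparing coefficients in the basis $\{1,2\sqrt{k(k+1)}\}$ replaces the valuation bookkeeping. Your approach buys a proof with no splitting of $p$ and no ramification indices, plus the sharper intermediate statement that $\alpha$ has order exactly $z(p^b)$ in $\bigl(\mathbb{Z}[\alpha]/p^b\mathbb{Z}[\alpha]\bigr)^\times$, which is the $p\mid k$ analogue of Lemma \ref{zpb} (with $+1$ only, no $\pm$ ambiguity); the paper's setup, on the other hand, transfers almost verbatim to the harder companion case $p\mid(k+1)$ of Lemma \ref{tweedeelgevallen}, where $\alpha\equiv-1$ modulo $\pi$ and your unit argument for $\alpha^n+1$ would need a parity case split. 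One small remark: your step ``$p^b\mid U_m$, so $z\mid m$'' invokes the standard divisibility property of the index of appearance for prime powers; it also follows from your own equivalence $p^b\mid U_m\iff\alpha^m\equiv 1$, since the exponents $m$ with $\alpha^m\equiv 1$ form a subgroup of $\mathbb{Z}$ and $z$ is the least positive one.
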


\begin{proof}
We prove the only if assertion. We let $a$ be such that $p^a\| k$. We put $k(k+1)=du^2$, and let ${\mathbb K}={\mathbb Q}[{\sqrt{d}}]$. We let $\pi$ be any prime ideal diving $p$ and let $e$ be such that $\pi^e\| p$. For example, $e=2$ if $p\mid d$. Let $\lambda$ be the residue class of  the number $U_i$ modulo $p^b$. Then $U_i\equiv \lambda\pmod* {p^b}$ implies that
$$
\alpha^i-\alpha^{-i}-4{\sqrt{k(k+1)}\lambda}\equiv 0\pmod*{\pi^{eb+ae/2}}.
$$
The same holds for $\alpha^i$ replaced by $\alpha^j$. Hence, these numbers both satisfy the quadratic congruence
$$
x^2-4{\sqrt{k(k+1)}}\lambda x-1=0\pmod* {\pi^{eb+ae/2}}.
$$
Taking their difference we get
\begin{equation}
\label{eq:cong}
(\alpha^i-\alpha^j)(\alpha^i+\alpha^j -4{\sqrt{k(k+1)}})\lambda\equiv 0\pmod* {\pi^{be+ae/2}}.
\end{equation}
In case $p\mid k$, we have that $\alpha=2k+1+2{\sqrt{k(k+1)}}\equiv 1\pmod* {\pi}$. Thus, the  second factor above is congruent to $2\pmod*{\pi^{ae/2}}$. In particular, $\pi$ is coprime to that factor. Thus,
$$
\alpha^i\equiv \alpha^j\pmod*{\pi^{be+ae/2}}.
$$
This leads to $\alpha^{i-j}\equiv 1\pmod* {\pi^{be+ae/2}}$. Changing $\alpha$ to $\alpha^{-1}$ and taking the difference of the above expressions we $\alpha^{i-j}-\alpha^{j-i}\equiv 0\pmod* {\pi^{be+ae/2}}$. Thus,
$$
2{\sqrt{k(k+1)}} U_{i-j}\equiv 0\pmod* {\pi^{be+ae/2}}.
$$
Clearly, the exponent of $\pi$ in $2{\sqrt{k(k+1)}}$ is exactly $ae/2$. Thus, $\pi^{eb}\mid U_{i-j}$. Since this is true for all prime power ideals $\pi^e$ dividing $p$, we get that $p^b\mid U_{i-j}$. Thus,
$i-j\equiv 0\pmod* {z(p^b)}$.

For the if assertion, assume that $i\equiv j\pmod*{z(p^b)}$. Then the congruence
$U_{i-j}\equiv 0 \pmod*{p^b}$ holds which implies
$\alpha^{i-j}=\alpha^{-(i-j)} \pmod* {\pi^{eb+ae/2}}$. In turn this gives $\alpha^{2(i-j)}-1\equiv 0 \pmod*{\pi^{eb+ae/2}}$ so
$(\alpha^{i-j}-1)(\alpha^{i-j}+1)\equiv 0 
\pmod* {\pi^{eb+ae/2}}$. Since $\alpha\equiv 1 
\pmod*{\pi}$,  the factor  $\alpha^{i-j}+1$ is congruent to $2\pmod*{\pi}$, so coprime to $\pi$. So $\alpha^{i-j}\equiv 1 \pmod* {\pi^{eb+ae/2}}$, giving
$\alpha^i-\alpha^j\equiv 0 \pmod* {\pi^{eb+ae/2}}$. Since $\alpha$ is a unit we also get $\alpha^{-i}-\alpha^{-j}\equiv 
0\pmod*{\pi^{eb+ae/2}}$.
Taking the difference of the last two congruences, we get
$$2\sqrt{k(k+1)} (U_i-U_j)\equiv 0 \pmod* {\pi^{eb+ae/2}}.$$ 
Simplifying the square-root which contributes a power $\pi^{ae/2}$ to the left--hand side of the above congruence, we get 
$$U_i\equiv U_j \pmod* {\pi^{eb}},$$ and since this is true for all $\pi\mid p$, 
we get that $U_i\equiv U_j\pmod*{p^b}$.
\end{proof}

Now we treat the more delicate case $p\mid (k+1)$.
The following lemma is the
analogue of Lemma \ref{elf}.
\begin{lemma}
\label{tweedeelgevallen}
Assume that $p$ is odd and $p\mid (k+1)$. Then $U_i\equiv U_j\pmod* {p^b}$ is equivalent to one of the following:
\begin{itemize}
\item[{\rm i)}] If $i\equiv j\pmod* 2$, then $i\equiv j\pmod* {z(p^b)}$.

\item[{\rm ii)}] If $i\not\equiv j\pmod* 2$, then $i+j\equiv 0\pmod* {z(p^b)}$.
\end{itemize}
\end{lemma}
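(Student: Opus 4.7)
My plan is to mimic the proof of Lemma \ref{elf} almost verbatim; the only substantive change is that the hypothesis $p\mid (k+1)$ forces $\alpha\equiv -1\pmod*{\pi}$ rather than $\equiv 1\pmod*{\pi}$. Indeed $\alpha=2k+1+2\sqrt{k(k+1)}$ with $2k+1\equiv -1\pmod*{p}$ and $\sqrt{k(k+1)}\equiv 0\pmod*{\pi}$. Writing $p^a\|(k+1)$, letting $\pi$ be any prime of $\mathbb{K}=\mathbb{Q}[\sqrt{d}]$ above $p$ with $\pi^e\|p$, and setting $\lambda=U_i\bmod p^b$, the computation of Lemma \ref{elf} carries over unchanged up to the factored congruence
\[
(\alpha^i-\alpha^j)(\alpha^i+\alpha^j-4\sqrt{k(k+1)}\lambda)\equiv 0\pmod*{\pi^{eb+ae/2}}.
\]

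The sign flip produces a clean parity dichotomy: modulo $\pi$ the first factor reduces to $(-1)^i-(-1)^j$ and the second to $(-1)^i+(-1)^j$, so exactly one of them is a $\pi$-unit---the second in case (i) when $i\equiv j\pmod*{2}$, the first in case (ii) when $i\not\equiv j\pmod*{2}$. In each case the unit factor can be cancelled, forcing the full divisibility $\pi^{eb+ae/2}$ onto the other factor.

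In case (i), I would extract $\alpha^{i-j}\equiv 1\pmod*{\pi^{eb+ae/2}}$ and then apply the conjugate-and-subtract step from Lemma \ref{elf} to obtain $4\sqrt{k(k+1)}U_{i-j}\equiv 0\pmod*{\pi^{eb+ae/2}}$, hence $p^b\mid U_{i-j}$ and $z(p^b)\mid i-j$. Conversely, if $i\equiv j\pmod*{2}$ and $z(p^b)\mid i-j$, then $p^b\mid U_{i-j}$ gives $\alpha^{2(i-j)}\equiv 1\pmod*{\pi^{eb+ae/2}}$; since $\alpha^{i-j}\equiv 1\pmod*{\pi}$, the factor $\alpha^{i-j}+1$ is a $\pi$-unit, so $\alpha^{i-j}\equiv 1\pmod*{\pi^{eb+ae/2}}$, and running the Lemma \ref{elf} argument in reverse yields $U_i\equiv U_j\pmod*{p^b}$.

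In case (ii), the surviving factor is $\alpha^i+\alpha^j-4\sqrt{k(k+1)}\lambda$; rewriting using $4\sqrt{k(k+1)}U_i=\alpha^i-\alpha^{-i}$ I obtain $\alpha^j\equiv -\alpha^{-i}\pmod*{\pi^{eb+ae/2}}$, and multiplication by $\alpha^i$ gives $\alpha^{i+j}\equiv -1\pmod*{\pi^{eb+ae/2}}$. Subtracting this congruence from its inverse $\alpha^{-(i+j)}\equiv -1$ yields $\alpha^{i+j}-\alpha^{-(i+j)}\equiv 0$, i.e.\ $4\sqrt{k(k+1)}U_{i+j}\equiv 0$, whence $z(p^b)\mid i+j$. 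For the converse, $p^b\mid U_{i+j}$ gives $\alpha^{2(i+j)}\equiv 1\pmod*{\pi^{eb+ae/2}}$; since $i+j$ is odd, $\alpha^{i+j}\equiv -1\pmod*{\pi}$, so $\alpha^{i+j}-1$ is a $\pi$-unit and therefore $\alpha^{i+j}\equiv -1\pmod*{\pi^{eb+ae/2}}$. This yields $\alpha^j\equiv -\alpha^{-i}$ and $\alpha^{-j}\equiv -\alpha^i$, whence $(\alpha^i-\alpha^{-i})-(\alpha^j-\alpha^{-j})\equiv 0\pmod*{\pi^{eb+ae/2}}$, and so $U_i\equiv U_j\pmod*{p^b}$. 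The only real obstacle is bookkeeping: verifying that $\nu_\pi(\sqrt{k(k+1)})=ae/2$ (which holds exactly as in Lemma \ref{elf} with the roles of $k$ and $k+1$ swapped), so no new phenomenon arises in the valuation accounting.
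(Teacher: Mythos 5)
Your proposal is correct and takes essentially the same route as the paper's own proof: the same factored congruence, the same parity dichotomy modulo $\pi$ isolating which factor is a $\pi$-unit, the same extraction of $\alpha^{i-j}\equiv 1$ resp.\ $\alpha^{i+j}\equiv -1 \pmod*{\pi^{eb+ae/2}}$, and the same converse arguments. The only cosmetic difference is that in case (ii) you substitute $\lambda\equiv U_i$ and multiply by $\alpha^i$, whereas the paper multiplies by $\alpha^j$ and substitutes $\lambda\equiv U_j$; the valuation bookkeeping you flag is exactly the point the paper also makes.
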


\begin{proof}
The proof is similar to the previous lemma. Let $p^a\mid (k+1)$ and let $\pi$ be some prime ideal in ${\mathbb K}$ such that $\pi^e\mid p$. Then $$\alpha=2k+1+2{\sqrt{k(k+1)}}\equiv -1\pmod*{\pi^{ae/2}}.$$ Let again
$\lambda$ be the value of $U_i\pmod*{p^b}$. The same argument as before leads us to the congruence \eqref{eq:cong}. The first factor is congruent to
$$
(-1)^i-(-1)^j\pmod*{\pi^{ae/2}}.
$$
The second one is congruent to 
$(-1)^i+(-1)^j\pmod*{\pi^{ae/2}}$. Thus, $\pi$ never divides both factors, and $\pi^{ae/2}$ divides $\alpha^i-\alpha^j$ in case $i\equiv j\pmod*{2}$, and it divides $\alpha^i+\alpha^j-4{\sqrt{k(k+1)}} \lambda$
in case $i\not\equiv j\pmod*{2}$. 

In case $i\equiv j\pmod* 2$, we have $\alpha^i\equiv \alpha^j\pmod* {\pi^{be+ae/2}}$. Thus, $\alpha^{i-j}\equiv 1\pmod* {\pi^{be+ae/2}}$. Arguing as
in the proof of the preceding lemma yields $U_{i-j}\equiv 0\pmod*{p^b}$
and hence $i\equiv j\pmod* {z(p^b)}$.

Assume now that $i\not\equiv j\pmod* {2}$.
Multiply both sides of the congruence
$$
\alpha^i+\alpha^j-4{\sqrt{k(k+1)}} \lambda\equiv 0\pmod*{\pi^{ae/2+be}}.
$$
by $\alpha^j$ and rewrite it as
$$
\alpha^{i+j}\equiv -\alpha^{2j}+4\alpha^j {\sqrt{k(k+1)}} \lambda\pmod*{\pi^{ae+be}}.
$$
Since $\pi^{ae/2}\mid 4{\sqrt{k(k+1)}} \alpha^j$, it follows that the value of the right--hand side is determined by 
$\lambda\pmod* {\pi^{be}}$, which is $(\alpha^j-\alpha^{-j})/(4{\sqrt{k(k+1)}})$.
Thus,
$$
-\alpha^{2j}+4\alpha^j {\sqrt{k(k+1)}} \lambda\equiv -\alpha^{2j}+\alpha^j (\alpha^{j}-\alpha^{-j})\equiv 
-1\pmod* {\pi^{be+ae/2}}.
$$  
So we get that $\alpha^{i+j}\equiv -1\pmod* {\pi^{be+ae/2}}$. The same holds with $\alpha$ replaced by $\alpha^{-1}$. Subtracting both congruences we get that
$$
\pi^{be+ae/2}\mid (\alpha^{i+j}-\alpha^{-i-j})=4{\sqrt{k(k+1)}} U_{i+j},
$$
leading to $(\pi^e)^b\mid U_{i+j},$
and thus to $z(p^b)\mid (i+j).$

We now have to do the if parts. They are pretty similar to the previous analysis.
We start with $i\equiv j \pmod* 2$. Then $i-j\equiv 0 \pmod* {z(p^b)}$, so $U_{i-j}\equiv 0\pmod* {p^b}$. This gives as in the previous case
$\alpha^{i-j}\equiv \alpha^{-(i-j)} \pmod* {\pi^{eb+ae/2}}$, so $\alpha^{2(i-j)}\equiv 1 \pmod*{\pi^{eb+ae/2}}$. Thus, 
$(\alpha^{i-j}-1)(\alpha^{i-j}+1) \equiv 0 
\pmod* {\pi^{be+ae/2}}$. Since $i-j$ is even, $\alpha^{i-j}\equiv (-1)^{i-j}\pmod* \pi \equiv 1 
\pmod*{\pi}$, 
so the second factor is congruent to $2 \pmod*{\pi}$, so it is coprime to $\pi$. So,
$\alpha^{i-j}-1\equiv 0\pmod* {\pi^{be+ae/2}}$. Now the argument continues as in the last part of the proof of the preceding lemma to get to the conclusion that
$U_i\equiv U_j\pmod*{p^b}$.

A similar argument works when $i\not\equiv j\pmod* 2$. With the same argument we get from $i+j\equiv 0\pmod* {z(p^b)}$ to the relation $U_{i+j}\equiv 0\pmod* {p^b}$, which
on its turn leads to $(\alpha^{i+j}-1)(\alpha^{i+j}+1)\equiv \pmod* {\pi^{be+ae/2}}$. Since $i+j$ is odd, the  factor $\alpha^{i+j}-1$ is congruent to is $-2 \pmod*{\pi}$,
so it is  coprime to $\pi$. So, $\alpha^{i+j}+1\equiv 
0\pmod* {\pi^{eb+ae/2}}$ and multiplying with a suitable power of $\alpha$ and rearranging we get 
$\alpha^i\equiv -\alpha^{-j} \pmod* {\pi^{be+ae/2}}$, and also 
$\alpha^{-i}\equiv -\alpha^j \pmod* {\pi^{be+ae/2}}$. Taking the difference of these last two congruences, we get
$\alpha^i-\alpha^{-i}-\alpha^j+\alpha^{-j}\equiv 0 \pmod* {\pi^{be+ae/2}}$, which is  
$2\sqrt{k(k+1)}(U_i-U_j)\equiv 0 \pmod* {\pi^{be+ae/2}}$. Simplifying $2\sqrt{k(k+1)}$, we get that $\pi^{be}$ divides $U_i-U_j$, and since $\pi$ is an arbitrary prime ideal of $p$, we conclude that $U_i\equiv U_j\pmod* {p^b}$.
\end{proof}
\begin{definition}
We write ${\mathcal P}(r)$ for the set of 
positive integers composed only of prime
factors dividing $r$.
\end{definition}
\begin{lemma}
\label{lem:iui}
We have 
\begin{equation}
\label{iui}
i\equiv j\pmod*{m} 
\iff U_i\equiv U_j\pmod*{m},
\end{equation}
precisely when 
$$m\in {\mathcal A}_k\cup {\mathcal B}_k.$$
\end{lemma}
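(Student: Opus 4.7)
The plan is to exploit Corollary~\ref{cor:appearance}, which identifies $\mathcal{A}_k \cup \mathcal{B}_k$ with those $m$ having $z(m) = m$ that are either even, or odd with $m \in \mathcal{P}(k)$, and then to analyse the congruence $U_i \equiv U_j \pmod*{m}$ prime-power by prime-power using Lemmas~\ref{poweroftwo}, \ref{elf} and \ref{tweedeelgevallen}. Since $z(m)$ is the least common multiple of the $z(p^b)$ over $p^b \| m$ and each satisfies $z(p^b) \le p^b$, the hypothesis $z(m) = m$ automatically forces $z(q^c) = q^c$ for every $q^c \| m$, which is precisely the shape in which the prime-power lemmas can be applied.

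For the forward direction, assume $m \in \mathcal{A}_k \cup \mathcal{B}_k$ and $U_i \equiv U_j \pmod*{m}$. If $m$ is even, Lemma~\ref{poweroftwo} forces $i \equiv j \pmod*{2}$. For each odd $p^b \| m$, Lemma~\ref{elf} gives $i \equiv j \pmod*{p^b}$ when $p \mid k$, while if $p \mid (k+1)$ (which only arises for $m \in \mathcal{B}_k$) the parity agreement lets Lemma~\ref{tweedeelgevallen}(i) reach the same conclusion. Combining via CRT produces $i \equiv j \pmod*{m}$. The reverse implication is obtained by reading Lemmas~\ref{elf} and \ref{tweedeelgevallen} as iffs, supplemented by the analogous converse of Lemma~\ref{poweroftwo}, which is straightforward from the identity $U_j - U_i = U_{(j-i)/2} V_{(j+i)/2}$ and the fact that $2 \| V_n$ for all $n$.

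For the backward direction, first set $i = z(m)$ and $j = 0$: then $U_i \equiv 0 \equiv U_j \pmod*{m}$, so \eqref{iui} forces $m \mid z(m)$, and Corollary~\ref{zongelijk} upgrades this to $z(m) = m$. Lemma~\ref{appearance} then places $m \in \mathcal{P}(k(k+1))$, giving $m \in \mathcal{B}_k$ at once when $m$ is even, and $m \in \mathcal{A}_k$ at once when $m$ is odd with $m \in \mathcal{P}(k)$.

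The main obstacle is ruling out the remaining possibility: $m$ odd with some prime $p \mid m$ satisfying $p \mid (k+1)$ and $p \nmid k$. Writing $p^b \| m$ and $m = p^b m'$ with $\gcd(p, m') = 1$, my plan is to set $j = i + m'$; since $m'$ is odd, $i$ and $j$ automatically have opposite parities. Then for each $q^c \| m$ with $q \mid (k+1)$ (including $q = p$) Lemma~\ref{tweedeelgevallen}(ii) demands $i + j \equiv 0 \pmod*{q^c}$, while for each $q^c \| m$ with $q \mid k$ Lemma~\ref{elf} demands $q^c \mid (j - i) = m'$, which is automatic because $q \neq p$. The resulting sum conditions form a system of linear congruences in $i$ over pairwise coprime moduli (the coefficient $2$ being invertible modulo each odd $q^c$), so CRT delivers a solution. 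The pair $(i, j)$ thus produced satisfies $U_i \equiv U_j \pmod*{m}$ yet $j - i = m' \not\equiv 0 \pmod*{m}$, contradicting \eqref{iui} and completing the proof.
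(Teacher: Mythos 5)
Your proposal is correct and follows essentially the same route as the paper: reduce to $z(m)=m$ via the pair $(z(m),0)$, analyse prime powers with Lemmas \ref{poweroftwo}, \ref{elf}, \ref{tweedeelgevallen} together with the Chinese remainder theorem and Corollary \ref{cor:appearance}, and eliminate odd $m$ having a prime factor of $k+1$ by exhibiting $i\not\equiv j\pmod{m}$ with $U_i\equiv U_j\pmod{m}$. The only cosmetic difference is that you obtain that pair by solving a congruence system with $j=i+m'$, whereas the paper writes it down explicitly as $i=(p^a-1)m_1/2$, $j=(p^a+1)m_1/2$, which is in fact one solution of your system.
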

\begin{proof} Since $0=U_0(k)\equiv U_{z(m)}(k)\pmod* m$, 
we must have $z(m)\ge m$. As $z(m)\le m$ by Corollary \ref{ongelijk} 
it follows that $z(m)=m$.
\par First subcase: $m$ is odd.\hfil\break
Since $z(m)=m$ all prime divisors of $m$ 
must divide
$k(k+1)$. Now suppose that $m$ has an odd prime
divisor $p$ dividing $k+1$. Thus 
$m=p^am_1$ with $m_1$ coprime to $p$ and odd.
Note that $z(p^a)=p^a$.
Consider $i=(p^{a}-1)m_1/2$ and $j=(p^{a}+1)m_1/2$. Then $i\not\equiv j\pmod* 2$ and $p^{a}\mid (i+j)$. Thus, $U_i\equiv U_j\pmod*{p^{a}}$
by Lemma \ref{tweedeelgevallen}.
Since
$m_1\mid i$ and $m_1\mid j$ and
$m_1$ is composed of primes dividing
$\Delta(k)=16k(k+1)$, it follows that $U_i\equiv 
U_j\equiv 0\pmod*{m_1}$ and hence
we have $U_i\equiv U_j\pmod*{m}$ with
$m\nmid (j-i)$. It follows that \eqref{iui}
is not satisfied. Thus we conclude that
if an odd integer $m$ is to satisfy \eqref{iui}
it has to be in ${\mathcal P}(k)$. For such
an integer, by Lemma \ref{elf} and the 
Chinese remainder theorem, \eqref{iui} is
always satisfied.
It follows that the solution set of odd
$m$ satisfying \eqref{iui} is
$\{m~{\text{\rm odd}}:z(m)=m\text{\rm ~and~}
m\in {\mathcal P}(k)\},$ which
by Corollary \ref{cor:appearance} equals ${\mathcal A}_k$.
\par Second subcase: $m$ is even.\hfil\break
Both the left and the right side of 
\eqref{iui} imply
that $i\equiv j\pmod*{2}$.
On
applying Lemmas \ref{elf} and \ref{tweedeelgevallen} and the
Chinese remainder theorem we see that
in this case the solution set is 
$\{m~\text{\rm even}:z(m)=m\},$ which
by Corollary \ref{cor:appearance} equals ${\mathcal B}_k$.
\end{proof}

\subsection{A Diophantine interlude}
The prime 3 sometimes being special
leads us to solve a very
easy Diophantine problem (left to
the reader).
\begin{lemma}
\label{dioflauw}
If $k>2$, then $k(k+1)$ has an odd prime factor
that is not special.
\end{lemma}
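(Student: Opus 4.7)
The plan is to split into two cases according to whether the prime $3$ is special. By Lemma \ref{speciaal}, no odd prime other than $3$ can be special, and $3$ is special precisely when $k\equiv 2$ or $k\equiv 6\pmod*{9}$.

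First, I would dispose of the case in which $3$ is not special. Here it suffices to produce any odd prime factor of $k(k+1)$, since such a prime is automatically non-special. Because $k$ and $k+1$ are coprime, if $k(k+1)$ were a pure power of $2$ then one of $k,k+1$ would equal $1$, forcing $k\in\{0,1\}$. Thus for $k>2$ the integer $k(k+1)$ is even but not a power of $2$, so has an odd prime factor, and we are done.

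Next, assume $3$ is special, so $k\equiv 2,6\pmod*{9}$; in particular $k\ne 3$ and $k\ne 8$. I would argue that $k(k+1)$ must have an odd prime factor distinct from $3$. Suppose for contradiction that the only odd prime dividing $k(k+1)$ is $3$, so $k(k+1)=2^{a}3^{b}$. Since $\gcd(k,k+1)=1$, one of $k,k+1$ is a pure power of $2$ and the other a pure power of $3$, with both exponents $\ge 1$ (the exponent $0$ case forces $k\in\{0,1\}$, contradicting $k>2$). Thus $\{k,k+1\}=\{2^{s},3^{t}\}$ with $|2^{s}-3^{t}|=1$ and $s,t\ge 1$. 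The main (and only real) step is to solve this tiny Diophantine equation elementarily: if $s\ge 3$, reducing modulo $8$ forces $t$ to be even, $t=2u$, whence $2^{s}=3^{2u}-1=(3^{u}-1)(3^{u}+1)$ factors as two powers of $2$ differing by $2$, which forces $3^{u}-1=2$, $u=1$, $s=3$ and gives $k=8$; if $s\le 2$ a direct check leaves only $(s,t,k)=(1,1,2)$ and $(2,1,3)$. Combined with the $k=8$ solution above, this yields $k\in\{2,3,8\}$, all of which contradict $k\ne 3,8$ and $k>2$. Hence $k(k+1)$ indeed has an odd prime factor $\ne 3$, which is non-special by Lemma \ref{speciaal}, completing the proof.

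The only nontrivial step is the elementary Diophantine verification in the second case; there is no genuine difficulty, and the heavy machinery (e.g.\ Mihailescu's theorem) is unnecessary because a mod $8$ argument together with the factorization $(3^{u}-1)(3^{u}+1)$ settles it at once.
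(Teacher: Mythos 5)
Your proof is correct and follows essentially the same route as the paper: reduce the negation to the Diophantine equation $k(k+1)=2^a\cdot 3^b$ under the constraint $k\equiv 2,6\pmod*{9}$ (the only way $3$ can be special), and rule it out. The only difference is that you supply the elementary mod $8$ and factoring argument for $|2^s-3^t|=1$, which the paper dismisses as "easily shown" and attributes to Levi ben Gerson, and you correctly use the congruence mod $9$ to discard the genuine solutions $k=3,8$.
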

\begin{proof}
If $k(k+1)$ only has an odd prime factor that is special, then
it must be $3$ and $k\equiv 2,6\pmod*{9}$. It follows that for
such a $k$ there are $a,b$ for which the Diophantine equation
\begin{equation}
\label{Diophantine}
k(k+1)=2^a\cdot 3^b,
\end{equation}
has a solution. However, this is easily shown to 
be impossible for $k>2$.
\end{proof}
It is slightly more challenging to
find {\tt all} solutions $k\ge 1$ of \eqref{Diophantine}.
In that case one is led
to the Diophantine equation
$$2^a-3^b\equiv \pm 1,$$
which was already solved centuries
ago by Levi ben Gerson (alias Leo
Hebraeus), who lived in Spain from
1288 to 1344, cf. Ribenboim \cite[p. 5]{Rib}.
It has the solutions $(a,b)=(1,0),(0,1),(2,1)$
and $(a,b)=(3,2)$, corresponding to,
respectively, $k=1,2,3$ and $k=8$.

\subsection{Bertrand's Postulate for S-units}
Before we embark on the proof of our main result we
make a small excursion in Diophantine approximation.
\begin{lemma}
\label{eventuallythere}
Let $\alpha>1$ be a real number and 
$p$ be an arbitrary odd prime. Then there 
exists a real number $x(\alpha)$ such
that for every $n\ge x(\alpha)$ the interval $[n,n\alpha)$ contains
an even integer of the form
$2^a\cdot p^b$.
\end{lemma}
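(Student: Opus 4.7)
The plan is to reduce the claim to a density/equidistribution statement about the orbit $\{b\log p/\log 2 \bmod 1\}$ on the circle, and then to lift that back to an existence statement for numbers $2^a p^b$ in the interval $[n,n\alpha)$.

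First I would observe that if $\alpha\ge 2$ the statement is trivial, since every interval $[n,2n)$ with $n\ge 2$ contains a power of $2$ that is even. So I may assume $1<\alpha<2$, and set $\beta=\log\alpha\in(0,\log 2)$ and $\gamma=\beta/\log 2\in(0,1)$. Taking logarithms, I want to show that for all sufficiently large $n$, the interval $[\log n,\log n+\beta)$ contains a number of the form $a\log 2+b\log p$ with $a\ge 1$ and $b\ge 0$.

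Next I would use that, since $p$ is an odd prime, $\log p/\log 2$ is irrational (otherwise $p$ would be a power of $2$). By Kronecker's theorem (or Weyl equidistribution), the sequence $\{b\log p/\log 2\bmod 1\}_{b\ge 0}$ is dense in $[0,1)$; hence there exists $N=N(\alpha)$ such that the finite set
$$S_N=\bigl\{\{b\log p/\log 2\}:0\le b\le N\bigr\}\subset[0,1),$$
viewed cyclically, has maximum gap strictly less than $\gamma$. This is the key quantitative ingredient: any shift of $S_N$ must therefore meet every cyclic subarc of length $\gamma$.

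Finally, set $x(\alpha)=2p^{N}$ and take any $n\ge x(\alpha)$, and let $y=\log n$. Applied to the shift by $-y/\log 2$, the previous paragraph yields some $b\in\{0,1,\dots,N\}$ with
$$\bigl\{(b\log p-y)/\log 2\bigr\}\in[0,\gamma),$$
which unwinds to the existence of an integer $a^\star$ with $a^\star\log 2+b\log p\in[y,y+\beta)$. Since $n\ge 2p^N$, we have $a^\star\log 2\ge y-b\log p\ge\log(n/p^N)\ge\log 2$, so automatically $a^\star\ge 1$. Exponentiating gives $2^{a^\star}p^{b}\in[n,n\alpha)$, which is the desired even integer of the required form. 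The only substantive step is the extraction of $N$ controlling the cyclic gap, which is a routine consequence of irrationality of $\log p/\log 2$; no deep Diophantine input is needed.
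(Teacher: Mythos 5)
Your proof is correct: the reduction to the case $1<\alpha<2$, the choice of $N$ so that the cyclic gaps of $S_N=\{\{b\log p/\log 2\}:0\le b\le N\}$ are all less than $\gamma=\log\alpha/\log 2$ (which indeed follows from density of $\{b\log p/\log 2\}$ modulo $1$, i.e.\ from irrationality of $\log p/\log 2$), and the verification that $n\ge 2p^N$ forces $a^\star\ge 1$ are all sound, and they yield the explicit threshold $x(\alpha)=2p^N$. Your route is, however, different in mechanics from the paper's. The paper argues as in its Lemma \ref{BP}: it first uses equidistribution to produce two quotients $2^c/p^d$ and $p^r/2^s$ lying in $(1,\alpha)$, and then builds a strictly increasing chain of integers of the form $2^a p^b$, starting from a suitable $m_1$, in which each term is obtained from the previous one by the substitution $p^d\rightarrow 2^c$ or $2^s\rightarrow p^r$; since consecutive ratios lie in $(1,\alpha)$, every interval $[n,n\alpha)$ with $n\ge m_1$ meets the chain, and one must also check that at every stage at least one substitution is available (otherwise the current term would be too small). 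Your argument skips the chain construction entirely: for each large $n$ you directly locate a suitable exponent $b\le N$ by the covering property of $S_N$ and then solve for $a$. This is a bit cleaner (no need to verify that a substitution move is always possible, and the admissible $b$ is bounded once and for all), at the cost of being slightly less in the spirit of the explicit numerical construction the paper uses for $p=5$ in Lemma \ref{BP}; both proofs rest on exactly the same Diophantine input, namely the irrationality of $\log p/\log 2$ and the resulting density of its multiples modulo $1$.
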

\begin{proof}
Along the lines of the proof 
of Lemma \ref{BP}. If $\beta$
is irrational, then the sequence of integers
$\{m\beta\}_{m=1}^{\infty}$ is uniformly distributed.
This allows one to find quotients
$2^c/p^d$ and $p^r/2^s$ that are in the interval
$(1,\alpha)$. Then proceed as in the proof
of Lemma \ref{BP}.
\end{proof}
The result also holds for S-units of the 
form $\prod_{i=1}^s p_i^{b_i}$ with $p_1<\ldots < p_s$ primes
and $s\ge 2$. 
\par In \cite{CLM} we consider
the {Bertrand's Postulate for S-units in 
greater detail.

\subsection{Proof of the main result for
general $k$}
Finally we are in the position to prove our main
result for $k>1$.
\begin{proof}[Proof of Theorem \ref{thm:2}] Let $k>2$.
\hfil\break
\indent First case: $m\in {\mathcal A}_k\cup {\mathcal B}_k.$ (Note that 
$z(m)=m$ for these $m$.)\hfil\break
By Lemma \ref{lem:iui} we infer that the inequality \eqref{dkinequal2}
holds true and moreover the equivalence 
\eqref{Dkn=n}. The ``$\Leftarrow$" implication
in \eqref{Dkn=n} yields
${\mathcal A}_k\cup {\mathcal B}_k
\subseteq {\mathcal D}_k$.
\par Second case: $z(m)=m$ and $m\not \in
{\mathcal A}_k\cup {\mathcal B}_k$.\hfil\break
In this case $m$ has a 
odd prime divisor $p$ 
that also divides $k+1$. 
Now write $m=p^a\cdot m_1$ with $p\nmid m_1$ and $m_1$ odd. Note
that $z(p^a)=p^a$.
Consider $i=(p^a-1)m_1/2$ and $j=(p^a+1)m_1/2$. Then $i\not\equiv j\pmod* 2$ and 
$p^{a}\mid (i+j)$. Thus, $U_i\equiv U_j\pmod* {p^{a}}$
by Lemma \ref{tweedeelgevallen}.
Since
$m_1\mid i$ and $m_1\mid j$ and
$m_1$ is composed of primes dividing
$\Delta(k)$, it follows that $U_i\equiv U_j\equiv 0\pmod* {m_1}$. This shows that
if $m$ discriminates the 
numbers $U_0(k),\ldots,U_{n-1}(k),$ then
$$
n\le \left(\frac{p^{a}+1}{2}\right)m_1.
$$
The interval $[(p^a+1)/2,p^{a})$ contains a power of $2$, say $2^b$. Then 
$2^bm$ is a better discriminator than $p^{a}m_1=m$. 
Thus if $z(m)=m$ and $m\not \in
{\mathcal A}_k\cup {\mathcal B}_k,$ then $m$ is not a discriminator value.
\par Third case: $z(m)<m$.\hfil\break
Here it follows by Lemma \ref{appearance}
that $z(m)\le \alpha_k m\le 2m/3$. In order
for $m$ to discriminate the first $n$ terms
we must have $n\le z(m)\le 2m/3$, that is
$m\ge 3n/2$. Now if in the interval $[n,3n/2)$  
there is an element from  ${\mathcal A}_k\cup {\mathcal B}_k,$
this will discriminate the first $n$ terms too and
is a better discriminator than $m$. Thus in this
case in \eqref{dkinequal2} we have equality.
\par Since by assumption $k>2$, by 
Lemma \ref{dioflauw} there exists
a non-special
odd prime $p$ dividing $k(k+1)$ 
and hence if $a,b\ge 0,$ then $2^{1+a}\cdot p^b
\in {\mathcal A}_k\cup {\mathcal B}_k.$ It now
follows by Lemma \ref{eventuallythere} that for every $n$ large enough the interval $[n,3n/2)$  
contains an element from  ${\mathcal A}_k\cup {\mathcal B}_k$ and so 
there are at most 
finitely many $n$ for which in \eqref{dkinequal2} strict inequality holds.
\end{proof}
\subsection{The set ${\mathcal F}_k$}
As was remarked in the introduction a consequence of Theorems \ref{twee}
and \ref{main2} is that
for $k>1$ there is a finite set ${\mathcal F}_k$ such that
$$
{\mathcal D}_{k}=
{\mathcal A}_k\cup
{\mathcal B}_k\cup {\mathcal F}_k.
$$
The set ${\mathcal F}_k$ 
is not a figment
of our proof of this result, as the
following result shows.
\begin{lemma}
There are infinitely many $k$ for the 
finite set ${\mathcal F}_k$ is non-empty.
It can have a cardinality larger than
any given bound.
\end{lemma}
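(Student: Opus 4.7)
The plan is to construct, for any prescribed $N \geq 1$, an integer $k_N > 2$ with $|\mathcal{F}_{k_N}| \geq N$; the infinitude of $k$ with $\mathcal{F}_k \neq \emptyset$ follows at once. The key observation, extracted from the proof of Theorem \ref{thm:2}, is that each element of $\mathcal{F}_k$ arises from some $n$ for which the interval $[n, 3n/2)$ misses $\mathcal{A}_k \cup \mathcal{B}_k$ yet some $m$ with $z(m) < m$ still discriminates $U_0(k), \ldots, U_{n-1}(k)$. To make $\mathcal{F}_k$ large one therefore wants $\mathcal{A}_k \cup \mathcal{B}_k$ to have many multiplicative gaps of ratio at least $3/2$, together with a valid discriminator inside each such gap.

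First, to guarantee the requisite sparsity of $\mathcal{A}_k \cup \mathcal{B}_k$, I would choose $k$ so that $k(k+1)$ has very few odd prime divisors; this restricts $\mathcal{A}_k \cup \mathcal{B}_k$ to products of $2$ and these few primes, producing arbitrarily long multiplicative gaps as $k$ grows. Infinitely many such $k$ are easy to produce: one may either vary $k$ over a suitable smooth-number family, or invoke standard sieve-theoretic results on near-prime pairs $(k, k+1)$, etc.

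Second, inside each such gap I would search for a discriminator of the form $m = 2^a q^b$, where $q$ is an odd prime with $q \nmid k(k+1)$ and $z_k(q) = (q+1)/2$, playing the role that $5$ played for $k = 1$. An argument parallel to Lemma \ref{lem:last}, now carried out in the quadratic field $\mathbb{Q}(\sqrt{k(k+1)})$ with $q$ inert, shows that such an $m$ discriminates exactly the first $z(m) = 2^a q^{b-1}(q+1)/2$ terms of $U(k)$. Taking $n$ to lie just above $2m/3$ and inside a chosen gap then forces $\mathcal{D}_k(n) = m \in \mathcal{F}_k$. Iterating across as many disjoint gaps as desired gives $|\mathcal{F}_{k_N}| \geq N$.

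The main obstacle is verifying the \emph{minimality} of each candidate $m$, that is, ruling out every smaller integer as a discriminator for the relevant $n$. This requires a case analysis in the spirit of Section~4 above: powers of $2$ are handled by Lemma \ref{poweroftwo}; potential $S$-unit values from $\mathcal{A}_k \cup \mathcal{B}_k$ below $m$ are excluded by the gap assumption together with Lemma \ref{eventuallythere} (which controls the distribution of such $S$-units); and other candidate values with $z(\cdot) < \cdot$ are excluded by Lemma \ref{appearance} and the index-of-appearance bounds. A complete and quantitative version of this analysis, including the explicit enumeration of $\mathcal{F}_k$ for each $k > 1$, is carried out in the sequel \cite{CLM}.
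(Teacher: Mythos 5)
Your plan has a genuine gap at its central step. You construct candidate values $m=2^aq^b$ with $q\nmid k(k+1)$ and $z_k(q)=(q+1)/2$, and you correctly identify that the whole burden is to verify \emph{minimality}: that for the chosen $n$ no smaller modulus discriminates $U_0(k),\dots,U_{n-1}(k)$. But you then defer exactly this verification to the sequel \cite{CLM}. For $k=1$ this verification is the entire content of Sections 4--5 of the paper (Lemmas \ref{restrict}, \ref{iseven}, \ref{2a5b}, \ref{BP}, \ref{geenwaarde}, \ref{twocases}): one must rule out not only powers of $2$ and elements of ${\mathcal A}_k\cup{\mathcal B}_k$, but also \emph{competing} moduli $2^c{q'}^d$ for every other type IV prime $q'$ (the ``why $5$ and not $37$'' problem), and this depended on delicate numerics ($13$ being a Wieferich-type prime for $\alpha$, $z(29)=5$, explicit powers of $2$ beating $2\cdot 37^i$, and Diophantine inequalities between powers of $2$ and $5$). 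Nothing in your sketch supplies the analogue of this for even one value of $k>2$, let alone uniformly for infinitely many $k$; you would also need, for each such $k$, the existence of a suitable prime $q$ (small enough that $z(m)=m(q+1)/(2q)$ leaves room below the next power of two, and realising the extremal ratio among all type IV primes), which is again not addressed. A smaller slip: choosing $k(k+1)$ with few odd prime factors does not create ``arbitrarily long multiplicative gaps'' in ${\mathcal A}_k\cup{\mathcal B}_k$, since all powers of $2$ always lie in ${\mathcal B}_k$, so gaps never exceed ratio $2$; ratio up to $2$ does suffice for your purposes, but the claim as stated is wrong.

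The paper avoids all of this with a transfer argument you may find instructive: given $N$, take any $k\equiv 1\pmod{N!}$. Then $4k+2\equiv 6$ modulo every $m\le N$, so $U(k)\bmod m$ coincides with $U(1)\bmod m$ for all $m\le N$, and hence the discriminator values of ${\mathcal D}_k$ that are at most $N$ are exactly those of ${\mathcal D}_1$. By Theorem \ref{main}, ${\mathcal D}_1$ contains the infinitely many values $2\cdot 5^{m_i}$ with $m_i\in{\mathcal M}$; choosing $N>2\cdot 5^{m_s}$ puts $2\cdot 5^{m_1},\dots,2\cdot 5^{m_s}$ into ${\mathcal D}_k$, and since $k\equiv 1\pmod{5}$ we have $5\nmid k(k+1)$, so none of these values lies in ${\mathcal A}_k\cup{\mathcal B}_k$. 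Thus $\#{\mathcal F}_k\ge s$ for all $k$ in an arithmetic progression, with no new minimality analysis needed — the known $k=1$ result does all the work.
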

\begin{proof}
Let $N$ be large and $k\equiv 1\pmod* {N!}$. Then $U(k)\pmod* m$ is the same as $U(1)\pmod* m$ for all
$m\le N$. In particular, if $N>2\cdot 5^{m_s}$, where $m_s$ is the
$s^\text{th}$ element of the set ${\mathcal M}$, then certainly ${\mathcal D}_1\cap [1,N]$ will contain the numbers $2\cdot 5^{m_i}$ for $i=1,\ldots,s$, and $5\nmid k(k+1)$ (in fact,
$k\equiv 1\pmod*{5}$, so $5\nmid k(k+1)$), therefore all such numbers
are in  the  set ${\mathcal F}_k$ for such values of $k$.
\end{proof}
Thus it is illusory
to want to describe ${\mathcal F}_k$ 
completely for every $k\ge 1$. Nevertheless, 
in part II \cite{CLM} we will explore
how far we can get in this respect.
\section{Analogy with the polynomial
discriminator}
In our situation for $k\ge 1$ on the one hand there
are enough $m$ with $z(m)=m$ and 
${\mathcal D}_k(m)=m$, on the other hand for the
remaining $m$ either $z(m)=m$ and $m$ is not a discriminator
value or we have $z(m)\le \alpha_km$ 
with $\alpha_k<1,$ a constant not depending on 
$m$. Thus the distribution of
$\{z(m)/m:m\ge 1\}$ shows a gap directly below 1 (namely
$(\alpha_k,1)$).
\par For polynomial discriminators the analogue of $z(p)$ is $V(p)$, the number of values
assumed by the polynomial modulo $p$. If on
the one hand there are enough integers $m$ such that $f$ permutes $\mathbb Z/m\mathbb Z$, and on the other hand $V(p)/p$ with $V(p)<p$ is bounded
away from 1 (thus also shows a gap directly below 1), then the polynomial discriminator can
be easily described for all $n$ large enough. See Moree 
\cite{M} and Zieve \cite{Z} for 
details.

\section*{Acknowledgments}
Part of this paper was written during an one month
internship in the autumn of 2016 of B.F. at the Max Planck
Institute for Mathematics in Bonn. She thanks the people of this institution for their hospitality. She was partly supported by the government of Canada's International Development Research Centre (IDRC) within the framework of the AIMS Research for Africa Project.\hfil\break
\indent Work on the paper was continued
during a visit of F.L. to MPIM in the
first half of 2017.
\par The authors like to thank
Alexandru Ciolan for help with 
computer experiments and proofreading earlier versions.


\begin{thebibliography}{99}
\bibitem{A} L.K. Arnold, S.J. Benkoski and B.J. McCabe, The discriminator (a simple
application of Bertrand's postulate), {\it Amer. Math. Monthly\/} {\bf 92} (1985), 275--277.

\bibitem{BH}  Yu. Bilu, G. Hanrot and
P.M. Voutier, Existence of primitive divisors of Lucas and Lehmer
numbers. With an appendix by M. Mignotte. {\it J. Reine Angew. Math.} {\bf 539} (2001), 75--122.

\bibitem{B} P.S. Bremser, P.D. Schumer and L.C. Washington, A note on the incongruence of consecutive integers to a fixed power, {\it J. Number Theory \/} {\bf 35} (1990), 105--108.

\bibitem{CLM} A. Ciolan, F. 
Luca and P. Moree, On the discriminator of
Lucas sequences. II: Effective 
aspects, in preparation.

\bibitem{CM} A. Ciolan and P. Moree, Browkin's discriminator conjecture,
arXiv:1707.02183, submitted for publication.

\bibitem{E} G. Everest, A. van der Poorten, I. Shparlinski
and T. Ward, {\it Recurrent Sequences},
Mathematical Surveys and Monographs {\bf 104}, American Mathematical Society, Providence, RI, 2003, pp. 318.

\bibitem{M} P. Moree, The incongruence of consecutive values of polynomials, {\it Finite Fields Appl \/} {\bf 2} (1996), 321--335.

\bibitem{MM} P. Moree and G. Mullen,
Dickson polynomial discriminators,
{\it J. Number Theory} {\bf 59} (1996),
88--105.

\bibitem{PA} P. Moree and A. Zumalac\'arregui,
Salajan's conjecture on discriminating terms in an exponential sequence, {\it J. Number Theory} {\bf 160} (2016), 646--665.

\bibitem{Rib} P. Ribenboim,
{\it Catalan's conjecture. Are 8 and 9 the only consecutive powers?} Academic Press, Inc., Boston, MA, 1994.

\bibitem{Ri} P. Ribenboim,
{\it The new book of prime number records}, Springer-Verlag, New York, 1996.

\bibitem{Shallit} J. Shallit, E-mail
correspondence with the third author,
May 2016.



\bibitem{Z} M. Zieve,
A note on the discriminator,
{\it J. Number Theory} {\bf 73} (1998),
122--138.

\end{thebibliography}
\end{document}